\definecolor{LightGray}{rgb}{0.9,0.9,0.9}
\DeclareMathOperator{\Hom}{Hom}					
\DeclareMathOperator{\Ext}{Ext}					
\DeclareMathOperator{\Tor}{Tor}					
\DeclareMathOperator{\id}{id}								
\DeclareMathOperator{\im}{im}					
\DeclareMathOperator{\coker}{coker}				
\newcommand{\ol}[1]{\overline{#1}}							%
\newcommand{\dual}{{}^\vee}									%
\newcommand{\ie}{\leavevmode\unskip, i.e.,\xspace}
\newcommand{\eg}{\leavevmode\unskip, e.g.,\xspace}
\newcommand{\shf}[1]{\mathscr{#1}}							
\newcommand{\OO}{\mathcal{O}}								
\newcommand{\cat}[1]{{\normalfont\mathsf{#1}}}	
\newcommand{\Coh}[1]{\cat{Coh}({#1})}			
\newcommand{\ECoh}[2]{\cat{Coh}_{#1}({#2})}		
\theoremstyle{plain}
\newtheorem{theorem}{Theorem}
\newtheorem{proposition}[theorem]{Proposition}
\newtheorem{lemma}[theorem]{Lemma}
\newtheorem{corollary}[theorem]{Corollary}
\theoremstyle{definition}
\newtheorem{definition}[theorem]{Definition}
\newtheorem*{notation}{Notation}
\theoremstyle{remark}
\newtheorem{remark}[theorem]{Remark}
\newtheorem{remarks}[theorem]{Remarks}
\newcommand{\injto}{\xhookrightarrow{}}						
\newcommand{\SC}{\mathbb{C}}                    
\newcommand{\SL}{\mathbf{SL}}						
\newcommand{\Z}{\mathbb{Z}}    						
\newcommand{\Q}{\mathbb{Q}}    						
\newcommand{\C}{\mathbb{C}}    						
\renewcommand{\P}{\mathbb{P}}  						
\newcommand{\head}{\operatorname{h}}
\newcommand{\tail}{\operatorname{t}}
\newcommand{\CQ}{\mathbb{C}Q}
\newcommand{\QuotI}[1][ ]{\Quot_I^{#1}}
\newcommand{\QuotInI}{\QuotI[n_I]}
\newcommand{\Pibullet}{\Pi^{\bullet}}
\numberwithin{equation}{section}
\numberwithin{theorem}{section}
\newcommand{\Cohfr}[1]{\cat{Coh}^{\mathrm{fr}}{#1}}
\DeclareMathOperator{\sheafHom}					
{
	\mathscr{H}\text{\kern -5.2pt {\calligra\large om}}\,
}
\DeclareMathOperator{\sheafExt}					
{
	\mathscr{E}\text{\kern -3.7pt {\calligra\large xt}}\,
}
\DeclareMathOperator{\Sym}{Sym}					
\newcommand{\Hilb}{\operatorname{Hilb}}						
\newcommand{\Quot}{\operatorname{Quot}}						
\newcommand{\isoto}{\xrightarrow{\sim}}						
\DeclareMathOperator{\Spec}{Spec}				
\DeclareMathOperator{\Proj}{Proj}				
\mathchardef\mhyphen="2D
\title{Noncommutative projective partial resolutions and quiver varieties}
\author{Søren Gammelgaard}
\address{Dipartimento di Matematica e Informatica, Università degli Studi di Ferrara, Via Nicolò Machiavelli, 30, 44121 Ferrara, Italy}
\email{gmmsrn@unife.it}
\author{\'Ad\'am Gyenge}
\address{Budapest University of Technology and Economics, Department of Algebra and Geometry, Institute of Mathematics, M\H{u}egyetem rakpart 3, H-1111, Budapest, Hungary}
\email{gyenge.adam@ttk.bme.hu}
\subjclass[2020]{Primary 14A22; Secondary 16G20, 14E16}
\keywords{noncommutative geometry, quiver variety, McKay correspondence, nongeneric}
\begin{document}

	\begin{abstract}
		
		Let $\Gamma\in \SL_2(\C)$ be a finite subgroup.  We introduce a class of projective noncommutative surfaces $\P^2_I$, indexed by a set of irreducible $\Gamma$-representations.  Extending the action of $\Gamma $ from $\C^2$ to $\P^2$, we show that these surfaces generalise both $[\P^2/\Gamma]$ and $\P^2/\Gamma$. We prove that isomorphism classes of framed torsion-free sheaves on any $\P^2_I$ carry a canonical bijection to the closed points of appropriate Nakajima quiver varieties. In particular, we provide geometric interpretations for a class of Nakajima quiver varieties using noncommutative geometry.
		Our results partially generalise several previous results on such quiver varieties. 
	\end{abstract}
	
	\maketitle
	
	\tableofcontents

	\section{Introduction}

	Given a subgroup $\Gamma\subset \SL_n(\C)$, one can construct the quotient singularity $X=\C^n/\Gamma\coloneqq \Spec (\C[x_1,\dots, x_n])^\Gamma$, and investigate various  spaces attached to $X$: for instance, the Hilbert schemes of points on $X$, the equivariant Hilbert scheme $n\Gamma\textrm{-}\Hilb(V)$, and resolutions of $X$. One can often construct such spaces as quiver varieties. 
	Especially, when $n=2$, such moduli spaces can often be constructed as \emph{Nakajima quiver varieties} (see \cref{subsec:interpretations} for a list). These quiver varieties, first defined in \cite{Nakajima94} have become very useful in representation theory and algebraic geometry, not only because they provide constructions of interesting moduli spaces, but also because they satisfy desirable geometric properties:  they are irreducible, have symplectic singularities, and when smooth, they are hyperk\"ahler.
	
	In this paper, we will find geometric interpretations of a large class of Nakajima quiver varieties. The quiver varieties appearing in this paper will always be built from a particular quiver (the \emph{McKay quiver}) canonically associated to $\Gamma\subset \SL_2(\C)$ by the McKay correspondence \cite{McKay80}. We will use the McKay quiver to build several \emph{noncommutative} `partial resolutions' $\P^2_I$ of the singular scheme $\P^2/\Gamma$, and we show that moduli spaces of \emph{framed sheaves} on $\P^2_I$ have a canonical bijection of closed points to Nakajima quiver varieties.

	In more detail, choose a finite subgroup $\Gamma\subset \SL_2(\C)$. 
	Then the quotient $\C^2/\Gamma$ is a Kleinian singularity (also known as, among other names, a du Val singularity, or a canonical surface singularity). The geometry of such singularities is of importance in the minimal model program and for theoretical physics (see e.g. \cite{Matsuki}, \cite{HeckmanMorrisonVafa}). It has long been recognised that an approach to studying them is through the McKay correspondence \cite{McKay80}. Among other things, this correspondence associates a graph (the \emph{McKay graph}) to the isomorphism class of $\Gamma$, which is an affine Dynkin graph. Replacing every edge in this graph by a pair of opposing arrows, we obtain a quiver, the \emph{McKay quiver} associated to $\Gamma$. From this quiver, with some additional data, one can construct several Nakajima quiver varieties, which often turn out to be isomorphic to - or, at least, be in canonical bijection with - interesting moduli spaces attached to the singularity $\C^2/\Gamma$. In various papers \eg \cite{VaVa, BelSche, BelCra, CGGS, CGGS2, Paper1, BGK}, such moduli spaces were identified with Nakajima quiver varieties; we mention two of them here. We also give an overview of all known (at least, to us) interpretations of Nakajima quiver varieties built from the McKay quiver $Q$ in \cref{subsec:interpretations}.
	
	Let $Q_0 =\{\rho_0, \dots, \rho_r\}$ be the set of irreducible representations of $\Gamma$, such that $\rho_0$ is trivial. For ease of notation, we will identify $Q_0$ with $\{0,\dots,r\}$. 
	
	First, in \cite{CGGS2}, we showed with our coauthors that for any non-empty $I\subseteq Q_0$  and for any dimension vector $n_I \in \mathbb{N}^{|I|}$ there is an \emph{orbifold Quot scheme} $\QuotInI([\C^2/\Gamma])$ parametrising isomorphism classes of quotients of an equivariant rank 1 sheaf on $\C^2$. 
	It was also shown \cite{CGGS2, CrawYamagishi} that  
	$\QuotInI([\C^2/\Gamma])$ is isomorphic to a particular Nakajima quiver variety.
	
	When $I = \{i\}\subset Q_0$ is a singleton with $\dim \rho_i = 1$, there is also an isomorphism \cite[Proposition 3.4]{CGGS2} (see also \cite[Theorem 1.3]{CrawYamagishi})
	$\QuotInI([\C^2/\Gamma]) \isoto \Quot^{n_i}_i (\C^2/\Gamma)$
	where $\Quot^{n_i}_i (\C^2/\Gamma) $ is the classical Quot scheme parameterising $\C[x,y]^\Gamma$- submodules $M$ of $R_i \coloneqq \Hom(\rho_i, \C[x,y])$ such that $\dim R_i/M = n_i$. The Quot scheme corresponding to the subset $I=\{0\}$ is thus isomorphic to the Hilbert scheme of points  $\Hilb^n (\C^2/\Gamma)$. 

	Second, the first author showed in \cite{Paper1} that there is a projective Deligne-Mumford stack $\mathcal{X}$ containing $\C^2/\Gamma$ as an open subscheme. On this stack, one can construct (\cite{Paper2}) moduli spaces of \emph{framed sheaves} of rank $\ge 1$, satisfying certain cohomological conditions. The set of closed points of such a moduli space carries a canonical bijection to the closed points of a corresponding Nakajima quiver variety, again constructed from the McKay quiver of $\Gamma$. This construction turns out to also generalise $\Hilb^n(\C^2/\Gamma)$, which corresponds to the case where the rank is 1. The construction from \cite{Paper1}, however, only focuses on the subset $\{0\}\subset Q_0$. 
	
	In this work, we 
	unify and generalise the results of the above papers. 
	We investigate the higher rank framed sheaves for general index sets $I\subseteq Q_0$ using a noncommutative geometric approach pioneered in \cite{BGK}, and find a description of the moduli spaces of these objects as Nakajima quiver varieties.

	We present two main results. 
	\subsection{Noncommutative projective partial resolutions}
	First, we extend the cornering method from \cite{CIK18} to graded algebras and modules to construct a family of noncommutative projective spaces which generalise several previous constructions and members of which have decent properties. Recall that our index set $Q_0=\{0,\dots,r\}$ corresponds to the irreducible representations of a fixed finite subgroup $\Gamma \subset \SL_2(\C)$.
	\begin{theorem}\label{thm:first}
		To each subset $I \subseteq Q_0$ there corresponds a `noncommutative projective surface' $\mathbb{P}^2_{I}$.

		In particular, our noncommutative $\mathbb{P}^2_{I}$ generalise the following spaces:
		\begin{itemize}
			\item for $I=\{0,\dots,r\}=Q_0$ our space $\mathbb{P}^2_{\Gamma} \coloneqq \mathbb{P}^2_{Q_0}$ is equivalent to the quotient stack $[\mathbb{P}^2/{\Gamma}]$;
			\item for $I=\{0\}$, $\mathbb{P}^2_{0}$ is equivalent to the scheme-theoretic quotient $\mathbb{P}^2/{\Gamma}$,
		\end{itemize} where \emph{equivalent} means that there is a natural equivalence of the respective categories of coherent sheaves. 
	\end{theorem}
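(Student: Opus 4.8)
The plan is to construct $\P^2_I$ explicitly as a noncommutative Proj scheme and then verify the two equivalences separately by unwinding the relevant definitions. Recall that the McKay quiver $Q$ and its preprojective-type structure encode the $\Gamma$-equivariant geometry of $\C^2$; the standard dictionary (the $\Gamma$-equivariant McKay correspondence, or noncommutative crepant resolution philosophy) says that the skew group algebra $\C[x,y]\rtimes\Gamma$ is Morita equivalent to the preprojective algebra $\Pi$ of $Q$, and that $\Gamma$-equivariant sheaves on $\C^2$ correspond to modules over this algebra. I would first set up the \emph{graded} analogue of this picture for $\P^2$: extend the $\Gamma$-action from $\C^2$ to $\P^2$ using the given embedding, so that the homogeneous coordinate ring $S=\C[x,y,z]$ carries a $\Gamma$-action (acting trivially on $z$), and form the graded skew group algebra $B\coloneqq S\rtimes\Gamma$. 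The noncommutative projective space attached to $B$ is, by Artin--Zhang's theory, the quotient category $\cat{QGr}\,B = \cat{Gr}\,B/\cat{Tors}\,B$, and restricting to finitely generated objects gives the category of coherent sheaves. The key construction is then the \emph{cornering}: for a subset $I\subseteq Q_0$, one takes the idempotent $e_I=\sum_{i\in I}e_i\in\C\Gamma\cong\bigoplus_i \End(\rho_i)$ and forms the cornered graded algebra $B_I\coloneqq e_I B e_I$; the scheme $\P^2_I$ is defined as $\Proj$ of this graded algebra in the noncommutative sense, i.e. via its category $\cat{QGr}\,B_I$.

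With this setup, the structural claim ``$\P^2_I$ is a noncommutative Proj scheme'' is essentially definitional once I have checked that $B_I$ is a connected (or suitably graded) Noetherian $\C$-algebra to which the Artin--Zhang formalism applies; this is the content that \cref{thm:first} imports from the extension of the cornering method of \cite{CIK18} to the graded setting, which is presumably established in the preceding sections. The substantive work is verifying the two boundary cases. For $I=Q_0$, the idempotent $e_{Q_0}=1$, so $B_{Q_0}=B=S\rtimes\Gamma$; I would show that $\cat{QGr}(S\rtimes\Gamma)$ is equivalent to $\Coh{[\P^2/\Gamma]}$. The natural route is to identify $\Coh{[\P^2/\Gamma]}$ with the category of $\Gamma$-equivariant coherent sheaves on $\P^2$, then use the graded version of Serre's theorem equivariantly: $\Gamma$-equivariant sheaves on $\P^2$ are the quotient of finitely generated $\Gamma$-equivariant graded $S$-modules by the torsion subcategory, and $\Gamma$-equivariant graded $S$-modules are exactly graded $S\rtimes\Gamma$-modules. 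Matching the two torsion subcategories (both consist of modules that are finite-dimensional, equivalently annihilated by a power of the irrelevant ideal) then gives the equivalence $\cat{QGr}\,B\simeq \Coh{[\P^2/\Gamma]}$.

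For $I=\{0\}$, the idempotent is $e_0$, the projection onto the trivial representation, so $B_0=e_0(S\rtimes\Gamma)e_0\cong S^\Gamma$, the ring of $\Gamma$-invariants, which is precisely the homogeneous coordinate ring of the quotient scheme $\P^2/\Gamma$ under the chosen $\Gamma$-action. Here I would invoke ordinary (commutative) Serre's theorem: $\cat{QGr}\,S^\Gamma\simeq \Coh{\Proj S^\Gamma}=\Coh{\P^2/\Gamma}$. The one point demanding care is that cornering by an idempotent changes the category only in a controlled way — I must check that passing from $\cat{QGr}\,B$ to $\cat{QGr}(e_0 B e_0)$ (for the extreme corner) genuinely recovers the invariant-ring Proj and is compatible with the torsion subcategories on both sides, i.e. that the functor $M\mapsto e_0 M$ descends to an equivalence after Serre quotient. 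This reduction of the skew group algebra to its invariant corner, and the verification that it is compatible with the noncommutative Proj construction, is what realises $\P^2_0$ as the coarse quotient rather than the stack.

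The main obstacle I anticipate is the $I=\{0\}$ case and, more generally, controlling how the Serre quotient interacts with cornering. For the stacky case $I=Q_0$ everything is an equivalence of module categories before quotienting, so it is comparatively formal. But for the corner $e_0Be_0=S^\Gamma$, one is replacing a (potentially non-commutative, non-Noetherian-in-an-obvious-way) algebra by a commutative ring that may fail to be generated in degree one or may have a nonstandard grading, so I would need to ensure the graded algebra $S^\Gamma$ satisfies the hypotheses ($\chi$ condition, Noetherianity, finite cohomological dimension) under which $\cat{QGr}$ computes $\Coh{\Proj}$, and that the Veronese/regrading issues do not obstruct the identification of $\Proj S^\Gamma$ with the geometric quotient $\P^2/\Gamma$. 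Establishing that the cornering functor $e_0\cdot(-)$ is exact and kills exactly the torsion needed — so that it induces the claimed equivalence on the quotient categories — is the technical heart of the argument.
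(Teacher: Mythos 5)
Your proposal is correct and follows essentially the same route as the paper: cornering by $e_I$, the equivariant Serre theorem for $I=Q_0$ (the paper's \cref{lem:EquivCategories}), and the identification of the $I=\{0\}$ corner with the invariant ring $\C[x,y,z]^\Gamma$ followed by commutative Serre (the paper's \cref{prop:CorneredTo0}). The only cosmetic difference is that the paper defines $\P^2_I$ by cornering the Morita-equivalent graded preprojective algebra $\Pibullet$ rather than the skew group algebra $\C[x,y,z]\#\Gamma$ itself — an interchangeability the paper records explicitly in a remark after \cref{def:ProjPartRes} — and your flagged concerns about $S^\Gamma$ not being generated in degree one and about torsion subcategories matching under cornering are real but handled by the same standard facts the paper invokes.
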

	
	\begin{corollary}\label{cor:firstFunctorial}
		If $I\subset J$, there is a canonical morphism $\P^2_J\to \P^2_I$. If $I=\{0\}$, $J = Q_0$, this morphism corresponds to the coarse moduli space morphism $[\P^2/\Gamma]\to \P^2/\Gamma$.
	\end{corollary}
	
	The theorem will follow from \cref{sec:CorneredP2Geometry}, with the spaces $\P^2_I$ being defined in \cref{def:ProjPartRes}. The corollary will be shown in \cref{cor:factorise}. The two special cases mentioned above follow from categorical equivalences that are known to the experts, but the spaces $\P^2_I$ are, as far as we know, new. If $0\in I$, we call $\P^2_I$ a noncommutative \emph{projective partial resolution} of $\P^2/\Gamma$.

	Moreover, there is a noncommutative divisor $\P^1_I\subset \P^2_I$, the complement of which coincides with the noncommutative \emph{affine} scheme whose rank 1 Quot scheme was described in \cite{CGGS2}. (We will make this assertion precise in \cref{cor:IsEquivQuotScheme}.)
	The spaces $\P^2_I$ can, if $0\in I$, be interpreted as partial noncommutative resolutions of projectivisations of Kleinian singularities. They carry rich geometric structures, for instance a collection of 'twisting' line bundles, and hence they provide an interesting family of examples for noncommutative algebraic geometry.

	\subsection{Interpretations of Nakajima quiver varieties}
	Our second main result is that we enhance the main results of \cite{VaVa}, \cite{CGGS2}, and \cite{Paper1} (on the level of bijections of points) by giving a quiver description of moduli spaces of torsion free sheaves on $\mathbb{P}^2_{I}$. We do this using Nakajima quiver varieties of a specific type that we denote as $\mathfrak{M}_{\theta_I}(V, W)$ (we do not give the definition here, see \cref{def:NQV} for it). We will so far only note the Nakajima quiver varieties appearing here depend on two finite-dimensional $\Gamma$-representations $V, W$, and a subset $I\subset Q_0$ of irreducible representations. Let $V = \bigoplus \rho_i^{\mathbf v_i}$ be a $\Gamma$-representation satisfying a mild constraint on  the vector $\mathbf v = (\mathbf v_i)_{i\in Q_0}$, and let $W= \bigoplus \rho_i^{\mathbf w_i}$ be a $\Gamma$-representation with $\mathbf w_i=0$ for $i\not\in I$. Let $V_I = V|_I$, and denote by $\Cohfr{\P^2_I}(V_I, W)$ the set of isomorphism classes of pairs  $(\shf F, \phi_{\shf F})$ of a torsion free sheaf on $\P^2_I$ and a \emph{$W$-framing} at $\P^1_I$ such that $H^1(\mathbb{P}^2_{I}, {\shf F}(-1))=V_I$ (for the details, see \cref{subsec:framing}).

	Then we prove the following result in \cref{thm:main}.
	\begin{theorem}\label{thm:NQVbijectionIntro}
		There is a Nakajima quiver variety $\mathfrak{M}_{\theta_I}(V, W)$ and a canonical bijection
		
		\[\Cohfr{\P^2_I}(V_I, W)\isoto \mathfrak{M}_{\theta_I}(V, W)(\C).\]
	\end{theorem}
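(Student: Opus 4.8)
The plan is to run the monad-theoretic (ADHM/Beilinson) proof of the classical correspondence between framed sheaves and quiver data, transported to the noncommutative Proj scheme $\P^2_I$ of \cref{thm:first}; since we only need a bijection on closed points, I will argue pointwise and never compare full moduli functors. The indispensable tool on ordinary $\P^2$ is Beilinson's resolution of the diagonal, which reconstructs every sheaf through a spectral sequence whose input is its cohomology in a few twists together with the multiplication maps between them. So the first task is to set up the corresponding homological apparatus on $\P^2_I$: working in the Artin--Zhang framework (so that $H^\bullet(\P^2_I,-)$ is the derived section functor on the quotient category and the twists $\OO_I(k)$ form the ample sequence promised after \cref{thm:first}), I would exhibit the short exceptional-type collection $\{\OO_I(-1),\OO_I,\OO_I(1)\}$, prove the $\Ext$-vanishings that make it generate, and extract from the Koszul structure of the defining cornered graded algebra a resolution of the diagonal bimodule. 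This yields, for every $\shf F\in\Coh{\P^2_I}$, a convergent Beilinson-type spectral sequence.

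Equipped with this, the forward map sends a framed pair $(\shf F,\phi_{\shf F})$ to quiver data. Torsion-freeness together with the trivialisation $\phi_{\shf F}$ along the noncommutative divisor $\P^1_I$ (which plays the role of the line at infinity in ADHM) forces the vanishing of $H^0$ and $H^2$ of the relevant twists, so that the spectral sequence collapses to a three-term monad
\[ \shf M^{-1}\xrightarrow{\,a\,}\shf M^0\xrightarrow{\,b\,}\shf M^1,\qquad \shf F\cong\ker b/\im a, \]
with $\shf M^{\pm1}$ built from $H^1(\shf F(-1))=V_I$ and $\shf M^0$ absorbing in addition the framing space $W$. The differentials $a,b$ are, by construction, $\Gamma$-equivariant linear maps between isotypic components, i.e.\ an assignment of values to the arrows of the framed double of the cornered McKay quiver; the identity $ba=0$ is exactly the preprojective/moment-map relation, placing the datum in $\mu^{-1}(0)$, and the monad conditions (fibrewise injectivity of $a$, surjectivity of $b$) translate into $\theta_I$-stability. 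A module over the cornered algebra determines, via the cornering equivalence of \cite{CIK18} underlying \cref{thm:first} (and its preprojective refinement $\PiGammaI$ versus $\PiGamma$), the full representation on $Q_0$; this is where the dimension vector $\mathbf v$ is reconstructed from $\mathbf v|_I$ and where the mild constraint on $\mathbf v$ is used.

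The reverse map runs the same construction backwards: from a $\theta_I$-stable representation in $\mu^{-1}(0)$ I would form the monad over $\P^2_I$, check that stability makes $a$ injective and $b$ surjective (so $\ker b/\im a$ is a genuine torsion-free sheaf carrying the induced $W$-framing and satisfying $H^1(\shf F(-1))=V_I$), and verify that the two assignments are mutually inverse up to isomorphism of sheaves on one side and up to the $\prod_i\mathrm{GL}(V_i)$-action on the other. Because $\mathfrak M_{\theta_I}(V,W)$ is, by \cref{def:NQV}, the GIT quotient of the $\theta_I$-stable locus of $\mu^{-1}(0)$ by $\prod_i\mathrm{GL}(V_i)$, its closed points are exactly these orbits, giving the asserted bijection; specialising to $I=Q_0$ and $I=\{0\}$ should recover the statements of \cite{VaVa} and \cite{Paper1}.

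I expect the first step to be the principal obstacle. In the commutative case the resolution of the diagonal follows from the Euler/Koszul sequence, but on $\P^2_I$ it must be built by hand from the presentation of the cornered graded algebra, and the convergence and degeneration of the spectral sequence rest on cohomological vanishings ($\Ext^{>0}$ between the exceptional objects and concentration of the hypercohomology in the correct degree) that have to be proved directly in the Artin--Zhang formalism rather than quoted. The second genuinely delicate point is the precise dictionary between monad stability and combinatorial $\theta_I$-stability while keeping track of the framing along the noncommutative $\P^1_I$ and of the cornering that links $V_I$ to $V$; once these foundations are in place, the remaining verifications should follow the template of \cite{VaVa,Paper1,BGK}.
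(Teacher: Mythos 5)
Your proposal diverges fundamentally from the paper's proof, and the divergence exposes two genuine gaps. The paper never constructs a Beilinson-type resolution of the diagonal on $\P^2_I$; instead it transports everything to $\P^2_\Gamma$ via the adjoint pair $\tau^*,\tau_*$ (and the torsion-free modification $\tau^T$), invokes the already-known bijection of \cite{VaVa} on $\P^2_\Gamma \simeq [\P^2/\Gamma]$ at the generic parameter $\theta$, and then descends along the variation-of-GIT morphism $\mathfrak M_{\theta}(U,W)\to\mathfrak M_{\theta_I}(U,W)$. The reason the paper avoids your route is not cosmetic: the cornered algebra $\Pibullet_I$ is \emph{not} strongly regular --- the remark after \cref{prop:piBulletIProperties} shows explicitly that the Gorenstein condition fails whenever $I\ne Q_0$ --- so the Koszul-type resolution of the diagonal bimodule and the attendant Serre duality that your spectral sequence needs are precisely what is unavailable on $\P^2_I$. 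Indeed the paper states, after the conjecture at the end of \cref{sec:MoveToNQV}, that a Beilinson spectral sequence for $\Pibullet_I$-modules is an open problem they would \emph{like} to have. You correctly identify this step as "the principal obstacle," but you do not resolve it, and there is concrete evidence in the paper that it cannot be resolved by the standard Artin--Zhang machinery.

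The second gap is in your dictionary between monad data and points of $\mathfrak M_{\theta_I}(V,W)$. A monad over $\P^2_I$ would produce a module over the \emph{cornered} (framed pre)projective algebra, supported only on the vertices in $I\cup\{\infty\}$, whereas $\mathfrak M_{\theta_I}(V,W)$ parametrises $S_{\theta_I}$-equivalence classes of $\Pi^{\mathbf w}$-modules on all of $Q_0\cup\{\infty\}$. Cornering is a recollement, not an equivalence: $j_!$ is fully faithful but not essentially surjective, so "the full representation on $Q_0$" is not determined by the cornered data --- it is determined only up to adding vertex-simple summands at vertices outside $I$, which is exactly why the paper needs the sufficiency condition, \cref{lem:DimensionUpperBound}, and \cref{lem:Sufficiency}. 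Relatedly, since $\theta_I$ is \emph{not} generic, the closed points of $\mathfrak M_{\theta_I}(V,W)$ are not orbits of stable points under $\prod_i\mathrm{GL}(V_i)$ but closed orbits of polystable modules, i.e.\ $S$-equivalence classes; your final verification ("mutually inverse up to\dots the $\prod_i\mathrm{GL}(V_i)$-action") presumes the generic situation. The bulk of the paper's proof of \cref{thm:main} consists precisely of showing that each $S_{\theta_I}$-class has a unique summand with $\dim e_\infty=1$, that different lifts to $\theta$-stable points yield the same sheaf after applying $\tau_*$, and that the resulting map is injective. Without an argument replacing this analysis, your bijection is not established.
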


	Our main tool to show these two main theorems will be to introduce a handful of functors: We shall need a functor $\tau_*\colon \Coh{\P^2_{\Gamma}}\to \Coh{\P^2_I}$ and its left adjoint $\tau^*\colon  \Coh{\P^2_I}\to \Coh{\P^2_{\Gamma}}$.  As hinted at by the notation, these behave like the usual direct and inverse image functors from commutative algebraic geometry. 
	Although the definition of the noncommutative schemes $\mathbb{P}^2_{I}$ is straightforward, the existence of our direct image and pullback functors and the fact that they behave as expected is highly nontrivial. We also introduce a `restriction to the divisor at infinity' $c^*\colon \Coh{\P^2_I}\to \Coh{\P^1_I}$. All these functors are defined on the level of quiver representations. Especially $\tau^*$ and $\tau_*$ are induced by two functors arising from a \emph{recollement} of quiver varieties \cite{CIK18}.
	
	As already mentioned, we also show that \cref{thm:NQVbijectionIntro} generalises several earlier results on interpretations of Nakajima quiver varieties.
	In order to do this, we will also have to introduce an `open restriction' functor $r^*: \Coh{\P^2_I}\to \Pi_I\cat{-mod}$ where $\Pi_I\cat{-mod}$ should be thought of as the category of coherent sheaves on the \emph{affine} noncommutative scheme $\P^2_I\setminus \P^1_I$.

	A natural question is whether the sets $\Cohfr{\P^2_I}(V_I, W)$ can be made to carry an intrinsic scheme structure, and then whether the bijection of \cref{thm:NQVbijectionIntro} can be extended to an isomorphism of schemes. We are, however, not aware of any way of constructing moduli spaces of sheaves on noncommutative varieties as schemes or stacks, which is why we will consistently focus on establishing canonical bijections. A naive solution is to define the scheme structure on the moduli side by transfering the one from the quiver side. 
	
	One importance of the varieties  $\mathfrak{M}_{\theta_I}(V, W)$ and hence, of our spaces $\P^2_I$, is that in many cases we can build up modular forms from their topological invariants \cite{BertschGyengeSzendroi}. In these computations, a special role is played by the variation of GIT map which is closely related to $\tau_{\ast}$ and $\tau^{\ast}$ presented here. Therefore, a detailed understanding of their geometry and construction is necessary.
	Again, a scheme structure on $\Cohfr{\P^2_I}(V_I, W)$ would enable us to compare the topology of the two sides directly.

	\subsection{Quiver varieties as moduli spaces}
	\label{subsec:interpretations}
	We give here a brief overview of the already known interpretations of Nakajima quiver varieties built from McKay quivers associated to finite subgroups $\Gamma\subset \SL_2(\C)$. For ease of notation, it is customary to 
	write $\mathbf{v}, \mathbf{w} \in \mathbb{N}^{r+1}$ for the vectors collecting the multiplicities of the irreducible representations in $V$ and $W$ respectively. Then the quiver variety $\mathfrak{M}_{\theta}(V, W)$ is also denoted as $\mathfrak{M}_{\theta}(\mathbf{v}, \mathbf w)$, where $\theta$ is the stability parameter.
	
	Let us first note that in \cite[Section 4.5]{BellamyCrawSchedler} (cf.\cite{BelCra}) a complete description of the wall-and-chamber structure is given for the space of stability parameters, for any pair of vectors $\mathbf{w}, \mathbf v$.
	Let $\delta, \ol 1 \mathbb{N}^{r+1}$ be the dimension vectors given respectively by $\delta_i = (\dim \rho_i)$, $\overline{1}\coloneqq (1,0,\dots,0)$, i.e. $(\ol 1)_i=1$ only for $i =0$, corresponding to the trivial $\Gamma$-representation.  We then have:
	
	\begin{itemize}
		\item Set $\theta = 0$, $\mathbf v = n\delta$, and $\mathbf{w} =\overline{1}$ or $\mathbf{w} = 0$. Then, by \cite[Lemma 4.5]{BelCra} (cf. \cite[Proposition 33]{Kuznetsov07}) , there is an isomorphism $\mathfrak{M}_{0}(n\delta, \ol 1)\isoto \Sym^n(\C^2/\Gamma)$.
		\item There is a chamber $C^+$, such that for $\theta\in C^+$, $\mathfrak{M}_{\theta}(\mathbf{v}, \mathbf w)(\C)$ carries a canonical bijection to the closed points of a moduli space of $\Gamma$-equivariant framed sheaves on $\P^2$ (\cite[Theorem 1]{VaVa}).
		\item There is a chamber $C^-$, such that for $\theta\in C^-$, $\mathfrak{M}_{\theta}(\mathbf{v}, \mathbf w)$ is isomorphic to a moduli space of framed sheaves on a projective Deligne-Mumford stack containing the minimal resolution $\widetilde{\C^2/\Gamma}$ of $\C^2/\Gamma$ as an open substack (\cite{NakALE}). 
		\item For the special case $\mathbf w = \ol 1$ of the above, if $\theta\in C^-$, then there is an \emph{isomorphism} $\mathfrak{M}_{\theta}(n\delta, \ol 1) \isoto \Hilb^n(\widetilde{\C^2/\Gamma})$ \cite{Kuznetsov07}.
		\item For a general $\theta$ lying in $\delta^\perp$, a particular wall of $C^-$, there is an isomorphism $\mathfrak{M}_{\theta}(n\delta, \ol 1)\isoto \Sym^n \widetilde{\C^2/\Gamma}$ (\cite{Kuznetsov07}).
		\item There is a particular ray $\langle\theta_0\rangle$ in the boundary of $C^+$, such that if $\mathbf{w} = (r, 0,\dots, 0)$ for a positive integer $r$, the closed points of $\mathfrak{M}_{\theta_0}(\mathbf{v}, \mathbf w)$ are in canonical bijection with the closed points of a moduli space of framed sheaves on $\mathcal{X}$, a projective Deligne-Mumford stack containing $\C^2/\Gamma$ as an open substack. (\cite{Paper1}).
		\item For the special case $\mathbf w = \ol 1$ of the above, there is an \emph{isomorphism} $\mathfrak{M}_{\theta_0}( n\delta, \ol 1)\isoto \Hilb^n(\C^2/\Gamma)$ (\cite[Corollary 6.3]{CGGS}, \cite[Corollary 6.6]{CrawYamagishi})
		\item For $\theta_I$ lying in any boundary of $C^+$, and for $\mathbf{w} = \ol 1$, it was shown in \cite[Corollary 6.7]{CrawYamagishi} that $\mathfrak{M}_{\theta}(n\delta, \ol 1)$ is isomorphic to an 'orbifold Quot scheme' (see \cite[Section 3]{CGGS2} for the definition).
		
	\end{itemize}
	We remark that the arguments of \cite{CGGS, CGGS2}, purporting to prove the last two points in the overview above on the level of (underlying reduced) schemes contained a flaw, which was recently patched by \cite{CrawYamagishi}.

	\subsection{Conventions and notation}
	We work throughout over $\C$, thus all algebras appearing are $\C$-algebras. Given an algebra $R$, we write $R-\cat{mod}$ for the category of \emph{left} $R$-modules, $\cat{mod}-R$ for the category of \emph{right} $R$-modules. 
	
	We are primarily concerned with left modules. Thus \emph{module} means, unless indicated otherwise, \emph{left} module, and $\Hom_R(-,-)$ denotes the homomorphisms in $R-\cat{mod}$. When defining a functor, we will also only write out a definition for \emph{left} modules. In such cases, we will use the subscript $_{\mathrm{right}}$ for the corresponding functor defined for \emph{right} modules.
	Note that one of our main sources, \cite{BGK}, is concerned with \emph{right} modules. When using results from \emph{ibid}, we will consistently adapt their results to left modules, without remarking on this.
	
	All gradings appearing will be $\Z$-gradings. We will indicate the degree $d$-part of a graded algebra or module $A$ by $A_d$, in the case where $A=A_F$ already has a subscript, by $A_{F, d}$.
	
	Given a scheme (or stack) $X$, a \emph{point} of $X$ always means a $\C$-valued point.
	
	Finally, we shall write, given an arrow $a$ in a quiver $Q$, $\tail(a)$ for the tail of $a$, $\head(a)$ for the head.
	
	\subsection{Structure of the paper}
	\Cref{sec:preliminaries} collects the necessary background materials on noncommutative projective geometry, quiver algebras, and Nakajima quiver varieties. In \cref{sec:QuiverAlgs} we introduce the crucial technique of \emph{cornering}, which we use in \cref{sec:CorneredP2Geometry} to define the `noncommutative partial projective resolutions' mentioned in \cref{thm:first}, proving the same Theorem. The same section also defines the functors $\tau_*, \tau^*$. Then, in \cref{sec:MoveToNQV}, we define what it means for a coherent sheaf on $\P^2_I$ to be \emph{framed}, and we finally prove \cref{thm:NQVbijectionIntro}. Following this, \cref{sec:connections} is devoted to showing that \cref{thm:NQVbijectionIntro} indeed generalises \cite[Theorem 1.2]{Paper1} and (on the level of closed points) \cite[Corollary 6.7]{CrawYamagishi}. Following the main content, we show in \cref{sec:Appendix} that the stack $\mathcal{X}$ can in fact be replaced by $\P^2/\Gamma$ in point 5) in the list of \cref{subsec:interpretations}.
	
	\subsection*{Acknowledgments}
	We thank Bal\'azs Szendr\H oi, Alastair Craw, Ugo Bruzzo, Andrea Ricolfi, Erzsébet Lukács, Michele Graffeo, and Joachim Jelisiejew for helpful comments and questions. We thank Nanna Gammelgaard for pointing out several typos. We thank Okke van Garderen for pointing out a mistake in a previous version. S.G. was supported by a SISSA Mathematical Fellowship. Á.Gy.~was supported by the János Bolyai Research Scholarship of the Hungarian Academy of Sciences and by the  “Élvonal (Frontier)” Grant KKP 144148.
	\section{Preliminaries}\label{sec:preliminaries}
	
	\subsection{Noncommutative geometry}
	
	We collect here some results and definitions from \cite{BGK}.
	
	Let $R = \bigoplus_{i\ge 0}R_i$ be a finitely generated graded algebra.
	Let $\cat{gr}(R)$ be the category of finitely generated graded $A$-modules, and put 
	\[\cat{qgr}(R) \coloneqq \cat{gr}(R)/\cat{tor}(R),\]
	the quotient by the Serre subcategory $\cat{tor}(R)$ of finite-dimensional graded modules. In more detail, the  objects of $\cat{qgr}(R)$ are the same as those of $\cat{gr}(R)$, while
	\[ \mathrm{Hom}_{\cat{qgr}(R)}(M,N) = \varinjlim \mathrm{Hom}_{\cat{gr}(R)}(M',N/N')\]
	where the limit is taken over all submodules $M' \subset M, N'\subset N$ such that $M/M', N' \in \cat{tor}(R)$, the set of such pairs $(M', N')$ being ordered by $(M'_1, N'_1)\ll(M'_2, N'_2)$ if and only if $M'_2\subset M'_1,$ and $ N'_1\subset N'_2$. The construction induces a quotient functor
	\[ \pi: \cat{gr}(R) \to \cat{qgr}(R).\]
	
	If $R$ is commutative, Serre's theorem \cite[II.5.15.]{Har77} shows us that  $\cat{qgr}(R)$ is indeed equivalent to $\Coh{\Proj R}$.
	In noncommutative algebraic geometry the category $\cat{qgr}(R)$ 
	is thought of \cite[page~236]{artin1994noncommutative} as the category of coherent sheaves on a noncommutative scheme $X = \mathrm{Proj}(R)$: 
	\[ \Coh{\Proj R}) \coloneqq  \cat{qgr}(R).\]
	We will thus use the word \emph{sheaf} for an object of $\cat{qgr}(R)$. 
	We  write $\OO_X = \pi(_RR)$, where $_RR$ is the ring $R$ considered as a left module over itself. 
	Given a sheaf $\shf F\in \Coh{X}$  and an integer $d$, choose an $R$-module $M$ to represent $\shf F$. We set $\shf F(d) = \pi(M(d))$, where $M(d)$ is given by $M(d)_i = M_{d+i}$.
	Furthermore, we set $ \Ext^p_{X}(\shf F, \shf G) =\Ext^p(\shf F, \shf G) $ to be the $p$th derived functor of $\Hom_{\cat{qgr}(R)}(\shf F, \shf G)$, and $H^p(X, \shf F) = \Ext^p(\OO_{X}, \shf F)$. 
	
	We can, to a limited extent, make "internal" versions of these functors as follows. We  have, for every $i\in \Z$ and every positive $k$, a map $R_k\to \Hom_X(\OO_{X}(i), \OO_X(i+k))$ given by left multiplication. This induces, for any $\shf F\in \Coh{X}$ a \emph{right} $R$-module structure on $\bigoplus_{k\ge 0}\Hom_X(\shf F, \OO_{X}(k))$, and we can set \[\shf F\dual\coloneqq \sheafHom_X(\shf F, \OO_X) = \pi_{\mathrm{right}}\left(\bigoplus_{k\ge 0}\Hom_X(\shf F, \OO_{X}(k))\right) .\] The functor $\shf F\mapsto \sheafHom_X(\shf F, \OO_X)$ is 
	right exact, and we denote its derived functors by $\sheafExt^p(\shf F, \OO_X)$. One can also show that \[\sheafExt^p(\shf F, \OO_X)= \pi_{\textrm{right}}\left(\bigoplus_{k\ge 0}Ext^p(\shf F, \OO_X(k))\right).\]

	In general, for the category $\Coh{X}$ to behave well, we need $R$ to satisfy some additional constraints, for instance that of \emph{strong regularity}, see \cite[Definition 1.1.1]{BGK}, cf. \cite{artin1994noncommutative} for the definition.
	Especially, if $R$ is strongly regular, there are two attached integers $d, l$ (\emph{dimension} and \emph{Gorenstein parameter}), such that
	\begin{enumerate}
		\item $\Ext^{> d}(\shf F, \shf G) = 0$ for any $\shf F, \shf G\in \Coh{\Proj R}$,
		\item $\Ext^d_{\cat{mod}R}(R_0, R) = R_0(l)_{\mathrm{right}}$, $\Ext^i_{\cat{mod}R}(R_0, R) = 0  $ for all $i\ne d$.
	\end{enumerate}
	
	If $R$ is strongly regular, there is an important implication for $\Coh{\Proj R}$.
	
	\begin{proposition}(\textbf{Serre Duality.})\label{prop:SerreDuality}
		Let $R$ be a strongly regular algebra of dimension $d$ and Gorenstein parameter $l$, and let $\shf F, \shf G\in \Coh{\Proj R}$.  Then we have functorial isomorphisms
		\[\Ext^p(\shf F, \shf G) \isoto \Ext^{p-d}(\shf G, \shf F(-l))\dual\]
	\end{proposition}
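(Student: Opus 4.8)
The plan is to construct a functorial pairing and prove it is perfect by reducing to the generating line bundles, where everything is controlled by the Gorenstein data packaged into strong regularity. Write $X=\Proj R$ and let $\mathfrak{m}=R_{\ge 1}$ be the irrelevant ideal. The heart of the argument is the special case $\shf F=\OO_X$, which I would isolate as a \emph{fundamental duality}
\[H^p(X,\shf G)\isoto \Ext^{d-p}(\shf G,\OO_X(-l))\dual,\]
together with a trace isomorphism $H^d(X,\OO_X(-l))\isoto \C$. The Gorenstein parameter is exactly what supplies the dualizing sheaf here: the condition $\Ext^d_{\cat{mod}R}(R_0,R)=R_0(l)_{\mathrm{right}}$, with all other $\Ext^i_{\cat{mod}R}(R_0,R)$ vanishing, says that $R$ is Artin--Schelter Gorenstein, and this self-duality --- shifted by $l$ and concentrated in a single homological degree --- pins down the graded canonical module $\omega_R\cong R(-l)$, hence the dualizing sheaf $\omega_X=\pi(\omega_R)\cong \OO_X(-l)$ appearing in the twist $\shf F(-l)$.

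To prove the fundamental duality I would pass to the affine cone and argue through local cohomology at $\mathfrak{m}$. For a module $M$ representing $\shf G$, derived global sections on $X$ are computed by local cohomology via the comparison $\bigoplus_n H^p(X,\shf G(n))\cong H^{p+1}_{\mathfrak{m}}(M)$ for $p\ge 1$, corrected in low degrees by the exact sequence $0\to H^0_{\mathfrak{m}}(M)\to M\to \bigoplus_n H^0(X,\shf G(n))\to H^1_{\mathfrak{m}}(M)\to 0$. Local duality for the Artin--Schelter Gorenstein algebra $R$ then furnishes a functorial identification of the graded dual $H^i_{\mathfrak{m}}(M)\dual$ with a suitable $\Ext$ group $\Ext^{\bullet}_{\cat{gr}(R)}(M,\omega_R)$ valued in the canonical module $\omega_R\cong R(-l)$; applying $\pi$ and extracting the degree-zero graded piece converts the right-hand side into $\Ext^{d-p}(\shf G,\OO_X(-l))$. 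Matching homological indices and degree shifts, as dictated by the Gorenstein parameter, yields the displayed isomorphism. The finiteness built into strong regularity (in particular the $\chi$-condition) is what guarantees that these limits and identifications converge and are functorial.

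With the case $\shf F=\OO_X$ in hand, I would bootstrap to arbitrary $\shf F$ by d\'evissage. Both $\shf F\mapsto \Ext^p(\shf F,\shf G)$ and $\shf F\mapsto \Ext^{d-p}(\shf G,\shf F(-l))\dual$ are contravariant cohomological $\delta$-functors in $\shf F$, and the vanishing of $\Ext$ in degrees exceeding $d$ makes them bounded. The Yoneda composition $\Ext^p(\shf F,\shf G)\otimes \Ext^{d-p}(\shf G,\shf F(-l))\to \Ext^d(\shf F,\shf F(-l))$ followed by the trace defines a natural transformation between the two $\delta$-functors; evaluated on a generator $\OO_X(-n)$ it reduces, after absorbing the twist, to the fundamental duality applied to $\shf G(n)$, and is therefore an isomorphism there. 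By \cref{prop:Ampleness} every $\shf F$ admits an epimorphism $\OO_X(-n)^{\oplus m}\to \shf F$; feeding the associated short exact sequences into the two long exact sequences and comparing them by the five lemma, an induction on $p$ propagates the isomorphism from the generators to all of $\Coh{X}$. Functoriality in $\shf G$ is immediate from the Yoneda and trace construction.

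The main obstacle is the fundamental duality itself: establishing $H^p(X,\shf G)\cong \Ext^{d-p}(\shf G,\OO_X(-l))\dual$ with the correct functoriality demands both a careful translation between derived global sections on $\cat{qgr}(R)$ and local cohomology over $R$, and a version of local duality valid in the noncommutative Artin--Schelter Gorenstein setting. Rather than rebuild this from scratch, I would invoke the balanced dualizing complex formalism underlying \cite{BGK}, for which the notion of strong regularity is precisely tailored; once the fundamental duality and its trace are available, the remaining d\'evissage to general $\shf F$ and $\shf G$ is formal.
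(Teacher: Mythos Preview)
The paper does not actually prove this proposition: its proof consists entirely of the citation \cite[Theorem 2.3]{YekutieliZhang}, cf.\ \cite[p.~3]{artin1994noncommutative}. What you have written is a reasonable outline of the argument behind that citation --- identify the dualizing object $\OO_X(-l)$ from the Artin--Schelter Gorenstein condition, establish the case $\shf F=\OO_X$ via local cohomology and local duality (the balanced dualizing complex machinery you invoke at the end is exactly Yekutieli's), then d\'evisse over the ample sequence $\{\OO_X(n)\}$ using the five lemma. So you have not taken a different route so much as unpacked the black box the paper points to. Two minor remarks: first, you quietly corrected the exponent from the paper's $p-d$ to $d-p$, which is right (the paper's own application of the proposition, e.g.\ in the proof of \cref{lem:cohomologytrick}, uses $d-p$); second, constructing the trace $\Ext^d(\shf F,\shf F(-l))\to\C$ for \emph{arbitrary} $\shf F$ is not entirely innocent, but as you note this is absorbed by the dualizing-complex formalism, and in any case the d\'evissage only needs the comparison map on the generators $\OO_X(-n)$, where it reduces to the fundamental duality.
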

	\begin{proof}
		This is \cite[Theorem 2.3]{YekutieliZhang}, c.f. \cite[p.3]{artin1994noncommutative}.
	\end{proof}
	
	\begin{definition}[\cite{BGK}]\label{def:sheafTypes} 
		Assume that $\Coh{X} = \cat{qgr}(R)$ for a strongly regular algebra $R$.
		We say that a sheaf $\shf F\in \Coh{X}$ is:
		\begin{enumerate}
			\item \emph{locally free} if $\sheafExt^p(\shf F, \OO_X) = 0$ for all $p>0$,
			\item \emph{torsion-free} if it embeds into a locally free sheaf,
			\item \emph{Artin} if $H^{>0}(X, \shf F(k))=0$ for all $k\in \Z$.
		\end{enumerate}
	\end{definition}
	
	If $X$ is a smooth commutative projective variety, the definitions above are equivalent to the usual definitions (\cite[3]{BGK}).

	\subsection{Algebras associated to $\Gamma$}
	\label{subsec:mckay}
	Once and for all fix a finite group $\Gamma \subset \SL(2,\SC)$. We list the irreducible representations of $\Gamma$ as $\{\rho_0, \rho_1, \dots, \rho_r\}$, where $\rho_0$ is the trivial representation.
	
	Let $R=\SC[x,y,z]$. The group $\Gamma$ acts on the subring $\C[x,y] \subset R$: for $g\in \Gamma$ and $f\in R$, we have $(g\cdot f)(v) = f(g^{-1} v)$ for all $v\in V$. We extend this action by setting $g\cdot z = z$ for every $g\in \Gamma$. Geometrically, this means that we have extended the $\Gamma$-action on $\C^2$ to one on $\P^2$ such that group action restricts to an action on the additional divisor $\{z=0\}$. 
	
	Let $R^\Gamma$ denote the $\Gamma$-invariant subring of $R$.
	The $R^\Gamma$-module $R$ then decomposes into isotypical components \cite[Proposition~4.1.15.]{goodman2009symmetry}:
	\begin{equation}
		\label{eqn:decompR}
		R\cong \bigoplus_{0\leq i\leq r} R_i\otimes_\mathbb{C} \rho_i,
	\end{equation} 
	where $R_i\coloneqq \Hom_\Gamma(\rho_i,R)$ is an 
	$R^\Gamma$-module; note that $R_0=R^{\Gamma}$.

	Consider the skew-group algebra
	\[ S \coloneqq \SC[x,y,z] \ast \Gamma = R \ast \Gamma. 
	\]
	This algebra has a natural grading, defined by $\mathrm{deg}\, x =\mathrm{deg}\, y = \mathrm{deg}\, z = 1$, and
	$\mathrm{deg}\, \gamma = 0$ for any $\gamma \in \Gamma$.
	
	\begin{lemma}\label{lem:EmbeddingSkewGroup} The embedding of the graded algebras $\SC[x,y,z] \subset S$ induces an equivalence of categories 
		\[\Coh{\mathrm{Proj}(S)} \to \ECoh{\Gamma}{\mathrm{Proj}(\SC[x,y,z])}=\ECoh{\Gamma}{\mathbb{P}^2}\]
		where $\ECoh{\Gamma}{\mathbb{P}^2}$ is the category of $\Gamma$-equivariant coherent sheaves on the commutative projective plane $\mathbb{P}^2$.
	\end{lemma}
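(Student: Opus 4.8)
The plan is to factor the asserted equivalence through the category of $\Gamma$-equivariant graded $\SC[x,y,z]$-modules, descending a module-level identification to the quotient categories and finally invoking a $\Gamma$-equivariant form of Serre's theorem.

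First I would establish that restriction of scalars along the embedding $\SC[x,y,z]\subset S$ is an isomorphism of categories between $\cat{gr}(S)$ and the category $\cat{gr}^\Gamma(\SC[x,y,z])$ of $\Gamma$-equivariant finitely generated graded $\SC[x,y,z]$-modules. This is the standard dictionary for skew-group algebras: since $\deg\gamma = 0$ for all $\gamma\in\Gamma$, the group sits inside $S_0$, so every graded $S$-module $M=\bigoplus_d M_d$ restricts to a graded $\SC[x,y,z]$-module on which $\Gamma$ acts degreewise, and the defining relation $\gamma\cdot(r\cdot m)=(\gamma\cdot r)(\gamma\cdot m)$ in $S$ is exactly the compatibility making $M$ a $\Gamma$-equivariant $\SC[x,y,z]$-module. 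Conversely, a $\Gamma$-equivariant graded module carries a unique compatible $S$-action, and these assignments are mutually inverse and functorial, giving an isomorphism $\cat{gr}(S)\isoto\cat{gr}^\Gamma(\SC[x,y,z])$.

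Next I would check that this isomorphism carries the Serre subcategory $\cat{tor}(S)$ onto the finite-dimensional $\Gamma$-equivariant graded modules. Finite-dimensionality is a property of the underlying graded vector space, which is preserved both by restriction of scalars and by the inverse construction, so the torsion subcategories match on the nose. Consequently the isomorphism descends to the Serre quotients, yielding $\cat{qgr}(S)\isoto\cat{qgr}^\Gamma(\SC[x,y,z])$, where the target is the quotient of $\cat{gr}^\Gamma(\SC[x,y,z])$ by its finite-dimensional objects.

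Finally I would identify $\cat{qgr}^\Gamma(\SC[x,y,z])$ with $\ECoh{\Gamma}{\P^2}$. The ordinary Serre correspondence $\pi\colon\cat{gr}(\SC[x,y,z])\to\cat{qgr}(\SC[x,y,z])\simeq\Coh{\P^2}$ is natural in the graded algebra, hence $\Gamma$-equivariant for the given $\Gamma$-action; since $\Gamma$ is finite and we work over $\SC$, forming $\Gamma$-equivariant objects is exact and commutes with the Serre quotient, so the equivalence promotes to one between the respective categories of equivariant objects, namely $\cat{qgr}^\Gamma(\SC[x,y,z])\simeq\ECoh{\Gamma}{\P^2}$. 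Composing the three equivalences gives the claim, and unwinding the identifications shows the composite is indeed induced by $\SC[x,y,z]\subset S$ via restriction of scalars together with the residual $\Gamma$-action. I expect the main obstacle to be this last step: one must verify carefully that ``equivariant objects of the Serre quotient'' coincides with ``the Serre quotient of equivariant objects,'' i.e. that passing to $\Gamma$-equivariant objects commutes with localisation at $\cat{tor}$, which is precisely where the finiteness of $\Gamma$ and the characteristic-zero hypothesis are essential.
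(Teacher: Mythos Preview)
Your proposal is correct and is essentially the same approach as the paper's, just far more detailed: the paper's entire proof is the single line ``Follows from the definitions (cf.\ \cite[Proposition~3.2.7]{BGK}),'' and what you have written is precisely the unpacking of that assertion via the standard dictionary between graded $S$-modules and $\Gamma$-equivariant graded $\SC[x,y,z]$-modules, followed by the equivariant form of Serre's theorem. The subtlety you flag about commuting the Serre quotient with taking $\Gamma$-equivariant objects is real but, as you note, is resolved by the finiteness of $\Gamma$ in characteristic zero.
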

	\begin{proof} Follows from the definitions (cf. \cite[Proposition~3.2.7.]{BGK})
	\end{proof}

	\subsection{McKay quivers}
	We now associate two undirected graphs to the isomorphism class of $\Gamma$. First, the \emph{McKay graph} of $\Gamma$ has vertex set \[\{0,1, \dots, r\}\] corresponding to the irreducible representations of $\Gamma$, where for each $0\leq i, j\leq r$, we have \[\dim\Hom_{\Gamma}(\rho_j , \rho_i \otimes V )\] edges joining vertices $i$ and $j$. 
	
	Second, let $W = \bigoplus \rho_i^{\mathbf w_i}$ be a representation of $\Gamma$, and set $\mathbf{w}$ to be the vector $(\mathbf w_0, \dots, \mathbf w_r) \in \Z_{\ge 0}^{Q_0}$. 
	Applying the trick of Crawley-Boevey \cite{CrawBoe} we get the \emph{framed McKay graph} as follows: extend the original McKay graph with a single framing node $\infty$ and connect this new node to node $i$ with $\mathbf{w}_i$ edges, $0 \leq i \leq r$. The resulting graph then has vertex set
	\[ \{0,\dots ,r\} \cup \{\infty\}.\]
	
	To each undirected graph above we will associate a quiver, obtained as follows. Let $A_0$ be the set of vertices of the graph, and  $A_1$ the set of all pairs of an edge and an orientation of the same edge, thus $(A_0, A_1)$ form a quiver. We denote by $Q = (Q_0, Q_1)$, resp. $Q^{\mathbf{w}} = (Q^{\mathbf{w}}_0, Q^{\mathbf{w}}_1) $ the quivers obtained in this way from the McKay graph, resp. the framed McKay graph. 
	These are called the  \emph{doubled McKay quiver} and the \emph{doubled framed McKay quiver} respectively. Given an arrow $a$ in either quiver, write $\ol{a}$ for the opposing arrow \ie $a, \ol a$ correspond to the two possible orientations of the same edge.
	
	We now introduce the \emph{preprojective algebra}; in Section~\ref{subsec:graded_preprojective_algebra} below a variant of this, the graded preprojective algebra will be defined.
	Consider the following two-sided ideal of $\CQ$:
	\begin{equation}
		\label{eqn:preprojectiverelation}
		\mathcal J\coloneqq \left\langle \sum_{\overset{a\in Q_1}{\head(a)=i}}(a\ol a -\ol a a) \mid {i \in Q_0}
		\right\rangle.
	\end{equation}
	Replacing $Q$ by $Q^{\mathbf w}$ in this equation, we can similarly define an ideal $\mathcal{J}^\mathbf{w}\subset \C Q^\mathbf{w}$.
	
	\begin{definition}\label{def:preprojAlgs}
		We define the \emph{preprojective algebra} $\Pi $ by $\Pi \coloneqq \C Q/\mathcal{J}$.
		Similarly, define the \emph{framed preprojective algebra} $\Pi^\mathbf{w}$ by $\Pi^\mathbf{w} \coloneqq \C Q^\mathbf{w}/\mathcal{J}^{\mathbf{w}}$.
	\end{definition}
	
	Each vertex $i\in Q_0$, respectively $Q^{\mathbf w}_0$ gives rise to a vertex idempotent $e_i\in \Pi$ or $\Pi^{\mathbf{w}}$. Given a set of vertices $I$, we shall write $e_I \coloneqq \sum_{i\in I} e_i$.
	
	A module $M$ over either of the algebras just defined consists of a vector space $M_i = e_iM$ for each vertex $i$, and for every arrow $a$ a linear map $M_{\head(a)}\to M_{\tail(a)}$ satisfying the relations $\mathcal J$, respectively $\mathcal J^\mathbf{w}$. If $M$ is finite-dimensional, we can thus speak of its \emph{dimension vector} $\dim M = (\dim M_i) = (\dim e_iM)$, $i$ running over the vertices. If $M$ is a $\Pi^w$-module, we will often write $\dim M$ as $(\dim_\infty M, \mathbf v)$, where $\mathbf v\in \Z_{\ge 0}^{Q_0}$ is the dimension vector for the vertices except $\infty$.
	\subsection{Nakajima quiver varieties associated to $\Gamma$}
	Now we introduce the \emph{Nakajima quiver varieties} associated to $\Gamma$. We shall only use them in \cref{sec:MoveToNQV}.
	
	Recall the definition of the framed preprojective algebra $\Pi^{\mathbf w}$.
	We call a tuple $\theta\in \Q^{Q^\mathbf w_0}$ a \emph{stability parameter}. We say that a $\Pi^{\mathbf w}$-module $M$ is $\theta$-semistable, respectively $\theta$-stable, if
	\begin{enumerate}
		\item $\theta(\dim M) = 0$, and
		\item for every nonzero submodule $N\subset M$, we have $\theta(\dim N)\ge 0$, respectively $>0$.
	\end{enumerate}
	Given a dimension vector $ (1,\mathbf v) $ for $\Pi^{\mathbf w}$-modules, we say that a stability parameter $\theta$ is \emph{generic} if every semistable $\Pi$-module of dimension $ (1,\mathbf v) $ is stable. A $\Pi$-module is \emph{polystable} if it is a direct sum of $\theta$-stable modules.
	
	Then two $\Pi^\mathbf{w}$-modules $M, N$ of the same dimension are $S_\theta$-equivalent if they have submodule series
	\[0\subset M_0\subset\cdots \subset M_{t_1} = M, \quad 0\subset N_0\subset\cdots \subset N_{t_2} \subset N\]
	such that every submodule $Q$ appearing has $\theta(\dim Q)>0, and \bigoplus M_{i}/M_{i-1} \cong \bigoplus N_j/N_{j-1}$.
	
	The following theorem was proved in \cite{King94}, using a GIT quotient construction.
	\begin{theorem}\label{thm:NQVMod}
		There exists a coarse moduli space $\mathfrak M_{\theta}(\mathbf v, {\mathbf w})$ of $\theta$-semistable $\Pi^{\mathbf w}$-modules of dimension vector $(1, \mathbf v)$. Every point in $\mathfrak M_{\theta}(\mathbf v, {\mathbf w})$ corresponds to a $S_\theta$-equivalence class, thus $\mathfrak M_{\theta}(\mathbf v, {\mathbf w})$ is a fine moduli space if $\theta$ is generic.
	\end{theorem}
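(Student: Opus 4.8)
The plan is to construct $\mathfrak M_{\theta}(\mathbf v, \mathbf w)$ as a GIT quotient of an affine representation variety, following King's construction \cite{King94} essentially verbatim, the only input specific to our situation being the framing. First I would fix the dimension vector $d = (1, \mathbf v)$ (the entry $1$ sitting at the framing vertex $\infty$, as in the Crawley--Boevey trick) and form the affine space of representations of the doubled framed quiver,
\[
\operatorname{Rep}(Q^{\mathbf w}, d) = \bigoplus_{a \in Q^{\mathbf w}_1} \Hom\bigl(\C^{d_{\head(a)}}, \C^{d_{\tail(a)}}\bigr),
\]
on which $G = \prod_{i \in Q^{\mathbf w}_0} \mathrm{GL}(d_i, \C)$ acts by change of basis. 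The preprojective relations $\mathcal J^{\mathbf w}$ are $G$-equivariant polynomial equations, so they cut out a closed $G$-invariant subscheme $\operatorname{Rep}(\Pi^{\mathbf w}, d) \subseteq \operatorname{Rep}(Q^{\mathbf w}, d)$ whose points are precisely the $\Pi^{\mathbf w}$-modules of dimension $d$, with $G$-orbits corresponding to isomorphism classes.

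Next I would encode the stability parameter as a character. Since $\theta(d) = 0$, after clearing denominators the assignment
\[
\chi_\theta(g) = \prod_{i \in Q^{\mathbf w}_0} \det(g_i)^{\theta_i}
\]
is a genuine character of $G$ that is trivial on the diagonal scalars $\C^\ast \hookrightarrow G$ (these act trivially on $\operatorname{Rep}$). The heart of King's argument is the dictionary identifying, with the appropriate sign convention, GIT $\chi_\theta$-(semi)stability of a point with the module-theoretic $\theta$-(semi)stability defined above, and identifying $S$-equivalence of semistable points (meeting of orbit closures inside the semistable locus) with $S_\theta$-equivalence via Jordan--Hölder filtrations in the category of $\theta$-semistable modules. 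Granting this, I would define
\[
\mathfrak M_{\theta}(\mathbf v, \mathbf w) = \operatorname{Rep}(\Pi^{\mathbf w}, d) /\!/_{\chi_\theta} G = \Proj \bigoplus_{n \ge 0} \C\bigl[\operatorname{Rep}(\Pi^{\mathbf w}, d)\bigr]^{G,\, \chi_\theta^n},
\]
the projective-over-affine GIT quotient; by Mumford's GIT this is a coarse moduli space whose closed points are exactly the $S_\theta$-equivalence classes, proving the first two assertions.

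For the fineness claim when $\theta$ is generic, I would note that genericity forces every semistable module of dimension $(1,\mathbf v)$ to be stable, and that a $\theta$-stable module is a simple object in the subcategory of $\theta$-semistables, hence has endomorphism ring $\C$. Therefore the stabilizer in $G$ of any stable point is precisely the diagonal $\C^\ast$, so $PG = G/\C^\ast$ acts freely on the stable locus; the GIT quotient restricted there is a geometric quotient, and it remains only to produce a universal family. This last point is where I expect the genuine subtlety to lie: a priori there is a $\C^\ast$-gerbe obstruction to descending the tautological family. Here, however, the framing resolves it, since $d_\infty = 1$ means every stable representation carries a canonical trivialization at the vertex $\infty$, rigidifying the residual $\C^\ast$ and yielding a global universal family. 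Everything else in the argument is a direct transcription of King's GIT machinery; the framing-induced rigidification is the only step requiring care.
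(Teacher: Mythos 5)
Your proposal is correct and is precisely the argument the paper relies on: the paper does not prove this theorem itself but cites King's GIT construction, and your write-up is a faithful reconstruction of it, including the standard resolution of the fineness question via the dimension-one framing vertex (King's indivisibility criterion for the existence of a universal family). No gaps.
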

	\begin{definition}[\cite{Nakajima94}]\label{def:NQV}
		The scheme $\mathfrak M_{\theta}(\mathbf v, {\mathbf w})$ is a \emph{Nakajima quiver variety}.
	\end{definition}
	As already mentioned, we will often, given explicit $\Pi_0$-graded (i.e. $Q_0$-graded) vector spaces $V,W$, simply write $\mathfrak M_{\theta}(V, W)$ for $\mathfrak M_{\theta}(\dim V, \dim W)$.
	
	\begin{remarks}
		The original definition of a Nakajima quiver variety (in \cite{Nakajima94}) was as a hyperk\"ahler quotient, our definition is proved equivalent by Kuznetsov (\cite{Kuznetsov07}), using \cite{King94} and \cite{CrawBoe}.
		
		It is unknown whether Nakajima quiver varieties are reduced in general. Thus one sometimes explicitly works with the underlying reduced subscheme of the GIT quotient, this is done in e.g. \cite{CGGS}. We will in this paper only care about the underlying sets of closed points, so reducedness issues do not matter.
		
	\end{remarks}
	
	\begin{lemma}[{\cite{BelSche}}]
		The Nakajima quiver variety $\mathfrak M_{\theta}(\mathbf v, \mathbf{w})$ is quasiprojective, has symplectic singularities, is integral and irreducible. If $\theta$ is generic (i.e. if every $\theta$-semistable $\Pi^{\mathbf w}$-module of dimension $(1, \mathbf v)$ is stable), then $\mathfrak M_{\theta}(\mathbf v, \mathbf{w})$ is hyperk\"ahler.
	\end{lemma}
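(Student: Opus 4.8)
The plan is to realise $\mathfrak M_\theta(\mathbf v, \mathbf w)$ concretely through King's GIT construction (\cref{thm:NQVMod}) and then establish the four asserted properties separately, viewing the variety as a symplectic reduction of a representation space. Write $\Rep(Q^{\mathbf w},(1,\mathbf v))$ for the space of representations of the doubled framed McKay quiver with the given dimension vector, let $G=\prod_{i\in Q_0}GL_{\mathbf v_i}(\C)$ act by change of basis (the one-dimensional framing space at $\infty$ being rigidified, so that the global scalars act trivially), and let $\mu\colon \Rep(Q^{\mathbf w},(1,\mathbf v))\to \mathfrak g^*$ be the moment map whose zero locus cuts out exactly the representations satisfying the preprojective relations $\mathcal J^{\mathbf w}$. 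Then $\mathfrak M_\theta(\mathbf v,\mathbf w)=\mu^{-1}(0)/\!/_{\chi_\theta}G$ is projective over the affine quotient $\mathfrak M_0(\mathbf v,\mathbf w)=\mu^{-1}(0)/\!/G=\Spec\bigl(\C[\mu^{-1}(0)]^{G}\bigr)$. Quasiprojectivity is then immediate, since a GIT quotient with respect to a character is projective over the affine GIT quotient, and projective-over-affine implies quasiprojective.

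For integrality and irreducibility I would descend from $\mu^{-1}(0)$. The key input is Crawley-Boevey's analysis of the preprojective variety: for the dimension vector $(1,\mathbf v)$ of the framed affine Dynkin quiver, which lies in the relevant root-theoretic range, the scheme $\mu^{-1}(0)$ is a reduced, irreducible complete intersection of the expected dimension, i.e.\ the moment-map relations generate a radical ideal. Irreducibility then passes to the image $\mathfrak M_\theta$ under the quotient map, while reducedness of $\mu^{-1}(0)$ forces reducedness of the invariant ring and hence of the quotient; together these give that $\mathfrak M_\theta$ is integral. This is precisely the situation in which the reducedness flagged as open \emph{in general} in the preceding remark is in fact known, thanks to the special structure of the quiver and dimension vector.

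The substantive point, and the step I expect to be the main obstacle, is that $\mathfrak M_\theta$ has symplectic singularities in the sense of Beauville. The natural symplectic form on $\Rep(Q^{\mathbf w},(1,\mathbf v))$ pairing each arrow $a$ with its opposite $\ol a$ descends, by symplectic reduction, to a holomorphic symplectic form $\omega$ on the stable (hence smooth) locus of $\mathfrak M_\theta$, which is dense because the variety is reduced and irreducible. One must then verify two things: that $\mathfrak M_\theta$ is \emph{normal}, and that $\omega$ extends to a regular $2$-form on a resolution. Normality I would obtain from the normality of $\mu^{-1}(0)$ (again via Crawley-Boevey, as a normal complete intersection) together with the fact that GIT quotients of normal varieties are normal. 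For the extension, rather than relying on a symplectic resolution, whose existence is genuinely delicate for quiver varieties, I would invoke Namikawa's theorem: for a normal variety carrying a symplectic form on its smooth locus, the symplectic-singularity property is equivalent to having rational Gorenstein singularities. The rationality follows from Boutot's theorem applied to the reduction, since $\mu^{-1}(0)$ has rational singularities and rational singularities are inherited by GIT quotients under reductive group actions; the Gorenstein condition is supplied by $\omega$, which trivialises the canonical sheaf on the smooth locus. Assembling normality with this extension criterion yields the symplectic-singularity claim, and this is where essentially all of the work resides.

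Finally, for generic $\theta$ every $\theta$-semistable module of dimension $(1,\mathbf v)$ is stable, so $\mu^{-1}(0)^{ss}=\mu^{-1}(0)^{s}$ and $G$ acts freely with closed orbits on the stable locus; the GIT quotient is then a geometric quotient and $\mathfrak M_\theta$ is smooth. Via the equivalence with Nakajima's original hyperk\"ahler-quotient definition recorded in the remark above, $\mathfrak M_\theta$ is the hyperk\"ahler quotient of the flat hyperk\"ahler space $\Rep(Q^{\mathbf w},(1,\mathbf v))\cong \mathbb{H}^{N}$ by the maximal compact subgroup $K\subset G$ acting freely and isometrically. A hyperk\"ahler quotient by a free isometric action is again hyperk\"ahler, which gives the last assertion.
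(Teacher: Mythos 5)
The paper does not prove this statement at all: it is imported wholesale as a black box from Bellamy--Schedler, so there is no internal argument to compare yours against. Your sketch does follow the broad architecture of the actual proof in the literature (GIT quotient projective over the affine quotient, descent of irreducibility and normality from the moment-map fibre, Boutot plus the Namikawa-type criterion for symplectic singularities, Kempf--Ness for the hyperk\"ahler statement in the generic case), so as a reconstruction it is pointed in the right direction.

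There is, however, a genuine over-claim in your second paragraph. Crawley-Boevey's theorem that $\mu^{-1}(0)$ is a reduced, irreducible, normal complete intersection of the expected dimension is \emph{not} available for an arbitrary dimension vector $(1,\mathbf v)$ on the framed affine Dynkin quiver; it requires the dimension vector to lie in the set $\Sigma_0$ of his canonical decomposition. For general $(1,\mathbf v)$ the fibre $\mu^{-1}(0)$ can be reducible and non-reduced, and handling this case --- via the canonical decomposition of $(1,\mathbf v)$ into roots, the resulting product decomposition of the quiver variety, and the \'etale-local structure theory --- is precisely the substantive content of Bellamy--Schedler that you are eliding when you say the dimension vector "lies in the relevant root-theoretic range." Your parenthetical assertion that reducedness "is in fact known" in this situation also sits uneasily with the paper's own remark immediately above the lemma, which states that reducedness of Nakajima quiver varieties is unknown in general and that one sometimes passes to the underlying reduced scheme; note that the lemma as stated concerns irreducibility and symplectic singularities of the variety (i.e.\ of the reduced scheme), not reducedness of the GIT quotient, so your argument is trying to prove more than is claimed and doing so from a premise that is not established. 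The remaining steps (quasiprojectivity, Boutot, the trivialisation of the canonical sheaf by the descended symplectic form, and the free hyperk\"ahler quotient for generic $\theta$) are fine modulo this gap.
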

	
	For any $\mathbf w, \mathbf v\in \Z_{\ge 0}^{Q_0}$, and a choice $I\subset Q_0$, we define $\theta_I$ to be the stability parameter for $\Pi^{\mathbf w}$-modules of dimension $(1,\mathbf v)$ to be that given by: \[\theta_I(e_i) =\begin{cases}
		1  &\quad\text{if } i\in I\\
		0 &\quad\text{if }i \in Q_0\setminus I\\
		-\sum_{i\in I}\mathbf v_i  &\quad\text{if } i=e_\infty\\
	\end{cases}\]
	We will also from now on write $\theta$ for $\theta_{Q_0}$.
	
	Let $M$ be a $\Pi^\mathbf{w}$-module of dimension $(1,\mathbf v)$. It follows from \cref{thm:NQVMod} that if $I\subset J\subset Q_0$, there is a morphism $\mathfrak M_{\theta_J}(\mathbf{v}, \mathbf w)\to \mathfrak M_{\theta_I}(\mathbf{v}, \mathbf w)$.

	One of the main results of this paper, given in \cref{thm:main} below, is, as mentioned, a description of the quiver varieties $\mathfrak M_{\theta_I}(\mathbf v, \mathbf w)$ for such $\theta_I$, subject to some restrictions on $\mathbf w, \mathbf v$.
	
	\subsection{Graded preprojective algebra}
	\label{subsec:graded_preprojective_algebra}
	
	We construct a \emph{graded} preprojective algebra $\Pi^{\bullet}$ as follows. Let $Q = (Q_0, Q_1)$ be the doubled quiver of the McKay graph of $\Gamma$. Construct a "tripled" quiver $Q^\bullet = (Q^\bullet_0, Q^\bullet_1)$ by adding to the double quiver a loop $d_i$ at each vertex $i$, so $Q^\bullet_0 = Q_0$. 
	Consider the following ideals of $\C Q^\bullet$:
	\begin{enumerate}
		\item\label{it:rel1} $\mathcal I_1 $, generated by the expressions $ d_ia-ad_j$ for all $a \in Q_1$ and all $i\in Q_0$ such that $h(a)=j$, $t(a)=i$,
		\item\label{it:rel2} $\mathcal I_2 = \left\langle \sum_{\overset{a\in Q_1}{\head(a)=i}}(a\ol a -\ol a a) \mid {i \in Q_0}
		\right\rangle$.
	\end{enumerate}
	
	\begin{definition}\label{def:GradedPreproj}
		We set $\Pi^{\bullet} =\C Q^\bullet/(\mathcal I_1+\mathcal I_2)$.
	\end{definition}
	This algebra was introduced in \cite[Appendix A]{BGK}, with a more general construction. One could also define $\Pi^\bullet$ as the Jacobian algebra $\operatorname{Jac}(Q^\bullet, W)$ for the \emph{potential} given by  \[W = \left(\sum_{i\in Q_0}d_i\right)\sum_{a\in Q_1}(a\ol a -\ol a a),\] see, e.g., \cite[Section 4.2]{Ginzburg06} for this viewpoint, but we shall use the definition given above.

	Note that the ideal of relations $\mathcal I_1+\mathcal I_2$ is generated by homogeneous elements. Therefore, the algebra $\Pi^{\bullet}$ inherits a grading by \emph{path length} from $\C Q^\bullet$.
	
	\begin{notation}\label{not:zElement}
		From now on, $\Pibullet$ will always denote this graded preprojective algebra, constructed from the McKay graph of $\Gamma$.
		We also fix some notation here: For a vertex $i\in Q_0$, let $z_i$ be the element of $\Pibullet$ arising from the loop at vertex $i$. We set $z = \sum_{i\in Q_0} z_i$, and for an arbitrary $I\subset Q_0$, we set $z_I = \sum_{i\in I} z_i$.
	\end{notation}
	\section{Quivers and quiver algebras}\label{sec:QuiverAlgs}
	
	\subsection{Morita equivalences}
	The reason to introduce the algebra $\Pibullet$ is that the category $\Coh{\Proj \Pibullet}$ is equivalent to $\ECoh{\Gamma}{\P^2}$, as we will show in this section.

	\begin{proposition}\label{prop:morita}
		The following pairs of algebras are Morita equivalent:
		
		\begin{itemize}
			\item $\Pi$ and $\C[x,y]\# \Gamma$,
			\item $\Pibullet$ and $\C[x,y,z]\# \Gamma$,
			\item $\Pi_0 = \Pibullet_0$ and $\C \Gamma$.
		\end{itemize}
		
		Furthermore, these Morita equivalences are compatible in the sense that the Morita equivalence induced between $ \Pibullet/z\Pibullet $ and $(\C[x,y,z]\# \Gamma)/(z\C[x,y,z]\#\Gamma) $ (recall our notation from \cref{not:zElement}) by the second equivalence in the list above agrees with the first. 
	\end{proposition}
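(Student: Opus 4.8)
The plan is to obtain all three equivalences at once as instances of \emph{cornering} by a single idempotent, to identify the resulting corners with the preprojective algebras, and then to deduce the asserted compatibility from the centrality of $z$. By Wedderburn's theorem $\C\Gamma\cong\bigoplus_{i=0}^{r}\operatorname{End}_\C(\rho_i)$, and for each $i$ I would choose a primitive idempotent $\epsilon_i$ in the block $\operatorname{End}_\C(\rho_i)$ and set $e=\sum_{i=0}^r\epsilon_i\in\C\Gamma$. Since a single primitive idempotent generates its matrix block as a two-sided ideal, $e$ is \emph{full} in $\C\Gamma$, i.e. $\C\Gamma\, e\,\C\Gamma=\C\Gamma\ni 1$; as $1\in\C\Gamma$ sits in degree $0$ inside both $\C[x,y]\#\Gamma$ and $S=\C[x,y,z]\#\Gamma$, fullness of $e$ propagates to both skew group algebras. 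Hence each algebra $A$ in the list is graded Morita equivalent to its corner $eAe$ via the progenerator $Ae$, and the Proposition reduces to the three ring isomorphisms $e(\C\Gamma)e\cong\Pi_0$, $\ e(\C[x,y]\#\Gamma)e\cong\Pi$, and $eSe\cong\Pibullet$, with $\epsilon_i$ corresponding in each case to the vertex idempotent $e_i$.

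The identification of the corners is the heart of the matter and is exactly the (graded) McKay correspondence. In degree $0$ one has $\epsilon_i(\C\Gamma)\epsilon_j\cong\Hom_\Gamma(\rho_j,\rho_i)=\delta_{ij}\C$, so $e(\C\Gamma)e\cong\C^{\,r+1}=\C Q_0=\Pi_0=\Pibullet_0$; comparing simple modules then gives the third equivalence $\C\Gamma\sim\C^{r+1}$. For the second, grading $\C[x,y]\#\Gamma$ by polynomial degree and using an isotypical decomposition as in \eqref{eqn:decompR} identifies $\epsilon_i(\C[x,y]\#\Gamma)\epsilon_j$ with $\bigoplus_{d\ge0}\Hom_\Gamma(\rho_j,\rho_i\otimes\Sym^d\C^2)$; the degree-$1$ piece $\Hom_\Gamma(\rho_j,\rho_i\otimes\C^2)$ has dimension equal to the number of edges between $i$ and $j$ in the McKay graph, so a choice of bases produces algebra generators of the corner matching the arrows $Q_1$. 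I would then verify that the relations they satisfy are generated by the preprojective relation $\mathcal J$ of \eqref{eqn:preprojectiverelation}; this is where the hypothesis $\Gamma\subset\SL_2(\C)$ enters, since the triviality of $\wedge^2\C^2$ makes $\C^2$ self-dual and is precisely what turns the quadratic relations of $\Sym^\bullet\C^2$ into the commutator relation. Establishing this isomorphism $e(\C[x,y]\#\Gamma)e\cong\Pi$ is the main obstacle; it is the classical statement that the skew group algebra of a Kleinian singularity is Morita equivalent to the preprojective algebra of its McKay quiver, and I would either invoke it directly or reprove it by the generators-and-relations comparison just sketched.

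For $eSe\cong\Pibullet$ I would exploit that $z$ is \emph{central} in $S$: it is $\Gamma$-invariant by construction and commutes with $x,y$, so $S\cong(\C[x,y]\#\Gamma)\otimes_\C\C[z]$ as graded algebras. Cornering by $e$ therefore yields $eSe\cong\bigl(e(\C[x,y]\#\Gamma)e\bigr)\otimes_\C\C[z]\cong\Pi\otimes_\C\C[z]$, with $z$ a central degree-$1$ variable. It remains to match this with the presentation of $\Pibullet$: writing $z=\sum_{i}z_i$ and identifying the component $\epsilon_i z\epsilon_i$ with the loop $z_i$ at vertex $i$, the relation $\mathcal I_1$ is exactly the statement that $z$ commutes with every arrow (so that the $z_i$ are the components of the central element $z$), while $\mathcal I_2=\mathcal J$ is the preprojective relation already present in the $z$-free corner. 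This gives $eSe\cong\Pibullet$ and hence the second equivalence of the list.

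Finally, compatibility is immediate from the centrality of $z$. Because $z$ is central and commutes with $e$, cornering commutes with passage to the quotient by the two-sided ideal $(z)$: one has $e(S/zS)e=eSe/\bigl(z\cdot eSe\bigr)$. On the ring side $S/zS\cong\C[x,y]\#\Gamma$, while under $eSe\cong\Pibullet$ the central element $z$ corresponds to $\sum_i z_i$, so $eSe/(z\cdot eSe)\cong\Pibullet/z\Pibullet$; here killing the loops makes $\mathcal I_1$ vacuous and leaves $\C Q/\mathcal J=\Pi$. Thus the second Morita equivalence descends to a cornering equivalence between $\Pibullet/z\Pibullet=\Pi$ and $(S/zS)=\C[x,y]\#\Gamma$, implemented by the \emph{same} idempotent $e$ and progenerator $(\C[x,y]\#\Gamma)e$ as the first equivalence, which is exactly the claimed agreement.
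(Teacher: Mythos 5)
Your proposal is correct and follows essentially the same route as the paper's proof: corner the skew group algebras by the sum $f=\sum_i f_i$ of one primitive idempotent per Wedderburn block of $\C\Gamma$, identify the corners with $\Pi$, $\Pibullet$, and $\Pi_0$ (the key isomorphism $f(\C[x,y]\#\Gamma)f\cong\Pi$ being the classical result the paper imports from \cite{CGGS2} and \cite{BGK}), and deduce compatibility from the fact that $z$ is $\Gamma$-invariant and central. Your derivation of the second equivalence via the factorisation $S\cong(\C[x,y]\#\Gamma)\otimes_\C\C[z]$ and $\Pibullet\cong\Pi\otimes_\C\C[z]$ is a slightly more explicit packaging of what the paper dispatches with ``proved in exactly the same fashion,'' but it is not a different argument.
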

	
	\begin{proof}
		The first statement is \cite[Proposition 2.3]{CGGS2}. We recall the construction: for every $\rho_i$ there is an idempotent $f_i\in \C\Gamma$ such that $\C\Gamma f_i\isoto \rho_i$. Letting $f = \sum f_i$, there is an isomorphism $f(\C[x,y]\# \Gamma)f \isoto \Pi $, and and the categorical equivalence of modules is induced by mapping a $\C[x,y]\# \Gamma$-module $M$ to $fM$.
		The second statement is \cite[Proposition 6.0.1]{BGK}, and is proved in exactly the same fashion.
		The third follows from noting that the two above Morita equivalences preserve degree. Finally, the indicated compatibility follows from noting that $\Gamma$ acts trivially on $z$.
		
	\end{proof}
	\begin{lemma}\label{lem:UncorneredIsStrongReg}
		The graded preprojective algebra $\Pibullet$ is strongly regular (see \cite[p.2]{BGK}), of global dimension 2 and Gorenstein parameter $-3$.
	\end{lemma}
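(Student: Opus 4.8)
The plan is to transfer the statement across the graded Morita equivalence of \cref{prop:morita}. Strong regularity, together with its two numerical invariants (the dimension $d$ and the Gorenstein parameter $l$), is formulated entirely through the module category and its $\Ext$-groups, and these data are preserved by a degree-preserving Morita equivalence. By the second bullet of \cref{prop:morita} it therefore suffices to prove that the skew-group algebra $S = \C[x,y,z]\#\Gamma$ is strongly regular, with dimension $d=2$ (matching condition (1), $\Ext^{>d}=0$ in $\cat{qgr}$) and Gorenstein parameter $-3$.

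For $S$ I would argue as follows. The polynomial ring $R = \C[x,y,z]$, with all variables in degree $1$, is a commutative connected graded Noetherian algebra, and by Serre's theorem $\cat{qgr}(R)\simeq\Coh{\P^2}$; ordinary Serre duality on $\P^2$ exhibits $R$ as strongly regular of dimension $2$, the Gorenstein parameter $-3$ being read off from the canonical bundle $\omega_{\P^2}\cong\OO_{\P^2}(-3)$. Passing to $S = R\#\Gamma$, observe that since $\deg\gamma = 0$ the group $\Gamma$ acts by degree-preserving automorphisms, and since we work over $\C$ the algebra $\C\Gamma$ is semisimple; these are precisely the hypotheses under which the skew-group construction preserves the Artin--Schelter regularity conditions. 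Concretely, $S$ is a finite (indeed free) module over the Noetherian ring $R$, hence Noetherian, and the length of a projective resolution of $S_0 = \C\Gamma$ equals that of $\C$ over $R$, namely $3$.

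The core of the argument is the Gorenstein duality (condition (2)), and this is the step I expect to be the main obstacle. I would establish it by a $\Gamma$-equivariant Koszul computation. The Koszul resolution of the trivial module $\C$ over $R$ is built from the exterior powers of the three-dimensional representation $V = \langle x,y,z\rangle$ and is $\Gamma$-equivariant because $\Gamma$ acts linearly on $V$. Applying the exact functor $S\otimes_R(-)$ produces a resolution of $S\otimes_R\C = \C\Gamma = S_0$ by free $S$-modules, and applying $\Hom_S(-,S)$ reduces the computation of $\Ext^{\bullet}_{\cat{mod}S}(S_0,S)$ to the commutative one tensored with $\C\Gamma$. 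The decisive point, which uses $\Gamma\subset\SL_2(\C)$ rather than merely $\mathbf{GL}_2(\C)$, is that the top exterior power $\wedge^3 V\cong\bigl(\wedge^2\langle x,y\rangle\bigr)\otimes\langle z\rangle$ is the \emph{trivial} $\Gamma$-representation, since $\wedge^2$ of the standard two-dimensional representation is the determinant, hence trivial on $\SL_2$. Thus the dualizing module carries no nontrivial character twist, the duality is concentrated in a single cohomological degree with internal shift $-3$, and one recovers exactly the invariants of $R$.

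Finally, I would transport these conclusions back to $\Pibullet$ through \cref{prop:morita}. Alternatively, one may invoke directly the general construction of \cite[Appendix A]{BGK}, where the graded preprojective algebra is shown to be strongly regular, and identify the invariants using \cref{lem:EmbeddingSkewGroup}, which identifies $\Coh{\Proj S}$ with $\ECoh{\Gamma}{\P^2}$, the coherent sheaves on the smooth Deligne--Mumford stack $[\P^2/\Gamma]$; equivariant Serre duality there again has dualizing sheaf $\OO(-3)$ and cohomological dimension $2$, confirming $d=2$ and $l=-3$.
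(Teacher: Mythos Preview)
Your approach is essentially the same as the paper's: both reduce, via the graded Morita equivalence of \cref{prop:morita}, to showing that the skew-group algebra $\C[x,y,z]\#\Gamma$ is strongly regular with the stated invariants. The paper simply cites \cite[Propositions~7.2.3 and~7.3.4]{BGK} for that fact, whereas you sketch the underlying argument (Koszul resolution, semisimplicity of $\C\Gamma$, triviality of $\wedge^3 V$ from $\Gamma\subset\SL_2$); the strategy is identical, and your alternative of invoking \cite[Appendix~A]{BGK} directly is equally close.
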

	\begin{proof}
		It is shown in \cite[Proposition 7.3.4, Proposition 7.2.3]{BGK} that $\C[x,y,z]\#\Gamma$ is strongly regular of global dimension 2 and Gorenstein parameter $-3$, but this algebra is Morita equivalent to $\Pibullet$ by \cref{prop:morita}.
	\end{proof}
	
	\begin{lemma}\label{lem:EquivCategories}
		There are natural equivalences
		\begin{itemize}
			\item $\iota_1\colon\ECoh{\Gamma}{\P^2}\isoto \Coh{\Proj \Pibullet} $
			\item $\iota_2\colon\ECoh{\Gamma}{\P^1}\isoto \Coh{\Proj \Pi} $
			
		\end{itemize}
		with the property that if $\shf F\in \ECoh{\Gamma}{\P^2}$, then $\shf F|_{L} \cong \iota_2^{-1}(\iota_1(\shf F)/z\iota_1(\shf F))$, where we are taking $L\cong \P^1$ to be the line $ \{z = 0\}\subset \P^2$.
	\end{lemma}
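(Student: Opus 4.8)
The plan is to build $\iota_1$ and $\iota_2$ by stacking a skew-group-algebra identification on top of a graded Morita equivalence, and then to verify the compatibility by chasing representing modules through the reduction modulo $z$. First I would construct $\iota_1$: by \cref{lem:EmbeddingSkewGroup} we have $\Coh{\Proj S}\simeq \ECoh{\Gamma}{\P^2}$ for $S=\C[x,y,z]\#\Gamma$, and by the second item of \cref{prop:morita} the algebras $S$ and $\Pibullet$ are Morita equivalent via the grading-preserving functor $M\mapsto fM$, where $f=\sum f_i$. I would observe that such a graded equivalence carries $\cat{gr}(S)$ to $\cat{gr}(\Pibullet)$ and sends the Serre subcategory $\cat{tor}(S)$ of finite-dimensional modules onto $\cat{tor}(\Pibullet)$ (since $M\mapsto fM$ preserves finite-dimensionality and gradedness), hence descends to $\cat{qgr}(S)\simeq\cat{qgr}(\Pibullet)$, i.e. $\Coh{\Proj S}\simeq\Coh{\Proj\Pibullet}$. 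Composing yields $\iota_1$. Replacing $\C[x,y,z]$ by $\C[x,y]$, the same argument with the $\P^1$-version of \cref{lem:EmbeddingSkewGroup} and the first item of \cref{prop:morita} produces $\iota_2$.

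For the compatibility I would work with representing modules. Fix $\shf F\in\ECoh{\Gamma}{\P^2}$ and choose a graded $S$-module $M$ with $\pi(M)$ corresponding to $\shf F$, so that $\iota_1(\shf F)=\pi(fM)$. Three observations drive the argument. First, the line $L=\{z=0\}$ is $\Gamma$-stable because $\Gamma$ fixes $z$, and the closed immersion $L\hookrightarrow\P^2$ corresponds to the graded surjection $\C[x,y,z]\twoheadrightarrow\C[x,y,z]/(z)=\C[x,y]$; hence $\Gamma$-equivariant restriction to $L$ is computed on modules by $M\mapsto M/zM$, read as a module over $\C[x,y]\#\Gamma=S/zS$. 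Second, since $z$ is central and $\Gamma$-invariant, multiplication by $z$ commutes with the truncation $M\mapsto fM$, so $fM/z(fM)=f(M/zM)$; moreover the presentation of $\Pibullet$ gives $\Pibullet/z\Pibullet\cong\Pi$, because setting $z=\sum_i z_i=0$ forces every loop $z_i=e_iz$ to vanish and leaves exactly the relations $\mathcal J$. Third, as $\pi$ is exact, $\iota_1(\shf F)/z\iota_1(\shf F)=\pi(fM)/z\pi(fM)=\pi\bigl(fM/z(fM)\bigr)=\pi\bigl(f(M/zM)\bigr)$, now viewed in $\cat{qgr}(\Pibullet/z\Pibullet)=\Coh{\Proj\Pi}$. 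Finally, the compatibility clause of \cref{prop:morita} says precisely that the Morita equivalence $\Pi\simeq\C[x,y]\#\Gamma$ underlying $\iota_2$ agrees with the one obtained by reducing the second equivalence mod $z$; hence $\iota_2^{-1}\bigl(\pi(f(M/zM))\bigr)$ is the equivariant sheaf on $L$ represented by $M/zM$, which by the first observation is $\shf F|_L$.

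The routine points — that graded Morita equivalences preserve the torsion subcategory and thus descend to $\cat{qgr}$, and that $\widetilde{M/zM}$ computes the equivariant restriction to $L$ — I would dispatch quickly. The main obstacle is tracking the reduction modulo $z$ across all three identifications at once: one must check that forming $-/z(-)$ commutes both with the Serre quotient $\pi$ (where exactness of $\pi$ enters) and with the idempotent truncation $M\mapsto fM$, and that the resulting $\Pi=\Pibullet/z\Pibullet$-module structure is exactly the one matched to $\C[x,y]\#\Gamma$ by the compatibility already recorded in \cref{prop:morita}. Once these three mod-$z$ reductions are shown to be literally the same operation, the isomorphism $\shf F|_L\cong\iota_2^{-1}\bigl(\iota_1(\shf F)/z\iota_1(\shf F)\bigr)$ is immediate.
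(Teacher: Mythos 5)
Your proposal is correct and follows the same route as the paper, which simply cites \cite[Proposition 6.0.1]{BGK} together with \cref{prop:morita}; you have unpacked that citation into the composition of \cref{lem:EmbeddingSkewGroup} with the graded Morita equivalence descended to $\cat{qgr}$, and verified the mod-$z$ compatibility the paper attributes to \cref{prop:morita}. The only point worth stating explicitly when dispatching the ``routine'' step is that the \emph{inverse} Morita functor also preserves the torsion subcategory (e.g.\ because finite-dimensional graded modules are exactly the finite-length ones here, and length is preserved by any equivalence), since $M\mapsto fM$ being dimension-decreasing only handles one direction.
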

	
	\begin{proof}
		The first equivalence is a special case of \cite[Proposition 6.0.1]{BGK}, and the second can be proved \emph{mutatis mutandis}, using \cref{prop:morita}. The final statement also follows from \cref{prop:morita}. 
	\end{proof}
	\subsection{Cornering}

	\begin{definition}\label{def:corneredAlgs}
		Choose a nonempty $I\subset Q_0$, and set $e_I = \sum_{i\in I} e_i$.
		
		We define \[\Pibullet_I \coloneqq e_I\Pibullet e_I,\] and similarly define $\Pi_I \coloneqq e_I\Pi e_I $. 
	\end{definition}
	We fix $I$ for the rest of the paper.
	
	We call the algebras $\Pibullet_I, \Pi_I$ \emph{cornered}, and the process of passing from $\Pibullet$ (for example) to $e_I \Pibullet e_I$ \emph{cornering}.
	Note also that we have $\Pibullet_I/z_I\Pibullet_I  = \Pi_I$.
	
	There are functors $j_!\colon \Pibullet_I\cat{-mod}\to \Pibullet\cat{-mod},\quad j^*\colon \Pibullet\cat{-mod}\to \Pibullet_I\cat{-mod}$, defined by 
	\[j_!(M) = \Pibullet e_I\otimes_{\Pi^\bullet_I} M, \quad j^*(N) = e_I\Pibullet\otimes_{\Pibullet} N.\]
	These form two out of the six functors of a \emph{recollement} (\cite{CIK18}), and have the following properties:
	\begin{lemma}\label{lem:RecollementFunctors}
		\begin{itemize}
			\item $j^*$ is exact;
			\item $j_!$ is fully faithful;
			\item $j_!$ is the left adjoint of $j^*$ (and therefore right exact);
			\item given a $\Pi_I$-module $M$, 
			$\Ext^k_{\Pi^\bullet}(j_!M, N) = 0$ for $k= 0,1$\footnote{In \cite{CIK18}, it is claimed that this holds for \emph{every} $k$, but this is a mistake.} and every $\Pibullet/\Pibullet e_I\Pibullet$-module $N$;
			\item $j^*j_! = \id$.	
		\end{itemize}
	\end{lemma}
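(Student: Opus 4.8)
Write $A=\Pibullet$, $e=e_I$, and $B=\Pibullet_I=eAe$, so that $j_!=Ae\otimes_B-$ and $j^*=eA\otimes_A-$. I would dispatch the four ``formal'' bullets first and isolate the fourth as the only one carrying real content. Exactness of $j^*$ is immediate once we note $j^*(N)=eA\otimes_A N\cong eN$: the right $A$-module $eA$ is a direct summand of $A$ (via $A=eA\oplus(1-e)A$), hence projective and flat, so $eA\otimes_A-$ is exact. For the adjunction I would feed $j_!=Ae\otimes_B-$ into the tensor--hom adjunction,
\[\Hom_A(Ae\otimes_B M,\,N)\cong\Hom_B\bigl(M,\,\Hom_A(Ae,N)\bigr),\]
and identify $\Hom_A(Ae,N)\isoto eN\cong j^*N$ by $f\mapsto f(e)$ (well defined since $f(e)=f(e\cdot e)=e\,f(e)\in eN$); this gives $j_!\dashv j^*$, whence $j_!$ is right exact. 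The identity $j^*j_!=\id$ is the computation
\[j^*j_!M=eA\otimes_A(Ae\otimes_B M)\cong(eA\otimes_A Ae)\otimes_B M\cong eAe\otimes_B M=B\otimes_B M\cong M,\]
using that the multiplication $eA\otimes_A Ae\isoto eAe$ is an isomorphism. Full faithfulness of $j_!$ then comes for free, since a left adjoint is fully faithful exactly when its unit $\id\to j^*j_!$ is an isomorphism.

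For the vanishing $\Ext^k_A(j_!M,N)=0$, note first that $N$, being a module over $A/AeA$ with $AeA=\Pibullet e_I\Pibullet\ni e$, is annihilated by $e$, so $j^*N=eN=0$. My plan mirrors the proof of \cref{lem:jShriekExt}: take a projective resolution $P^\bullet\to M$ over $B$ and apply $j_!$. Each $j_!P^i$ is a projective $A$-module, being a summand of a direct sum of copies of $Ae=j_!(B)$, and $Ae$ is itself a summand of $A$. If $j_!P^\bullet\to j_!M$ is again a resolution, then $\Ext^k_A(j_!M,N)$ is the $k$-th cohomology of $\Hom_A(j_!P^\bullet,N)$, and by the adjunction above every term is
\[\Hom_A(j_!P^i,N)\cong\Hom_B(P^i,j^*N)=\Hom_B(P^i,0)=0,\]
so the whole complex is zero and all $\Ext^k_A(j_!M,N)$ vanish. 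For $k=0$ this is unconditional, since $\Hom_A(j_!M,N)\cong\Hom_B(M,eN)=0$ directly.

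The main obstacle is precisely the bracketed claim that $j_!$ turns the resolution $P^\bullet\to M$ into a resolution of $j_!M$, i.e.\ that $\Tor^B_{>0}(Ae,M)=0$ (equivalently that $\Pibullet e_I$ is flat over $\Pibullet_I$); this is the same exactness input implicitly used in \cref{lem:jShriekExt}, and it is genuinely needed, since in general the kernel of $j_!M'\hookleftarrow$ fails to vanish and is only seen to be an $A/AeA$-module. I would approach it from the module side: $j_!M=Ae_I\otimes_B M$ is generated by $e_I\otimes M=e_I(j_!M)$, so its top is concentrated at the vertices of $I$ and its projective cover is a sum of the $Ae_i$ with $i\in I$. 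The content is then to show that the \emph{entire} minimal $A$-resolution of $j_!M$ stays supported on $I$, equivalently that no $Ae_i$ with $i\notin I$ occurs; this is equivalent to the desired vanishing, since $\Ext^k_A(j_!M,S_i)$ computes those multiplicities and the simple $S_i$ is an $A/AeA$-module exactly when $i\notin I$. I expect this to follow from the recollement of \cite{CIK18} together with the structure of the cornered algebra $\Pibullet_I$, and this is where the hypothesis that $M$ descends to a $\Pi_I$-module should enter. Granting it, the remaining statements are, as above, purely formal.
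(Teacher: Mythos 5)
Your handling of the four ``formal'' bullets is complete and correct: exactness of $j^*$ from $eA$ being a projective right $A$-module, the tensor--hom adjunction with $\Hom_A(Ae,N)\cong eN$, the multiplication isomorphism $eA\otimes_A Ae\isoto eAe$ giving $j^*j_!=\id$, and full faithfulness from the unit being an isomorphism. This is in fact more than the paper supplies: its proof cites \cite{CIK18} for the first four bullets wholesale and only writes out the computation for $j^*j_!=\id$, which is the same computation you give. So for bullets one, two, three and five your proposal is correct and, if anything, more self-contained than the source.

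The fourth bullet is where you correctly sense trouble, and the gap you flag is genuine; worse, it cannot be closed along the lines you hope for. Your reduction is sharp: dimension-shifting along $0\to K\to P\to M\to 0$ with $P$ projective over $B=\Pibullet_I$ shows that $\Hom_A(j_!M,N)$ and $\Ext^1_A(j_!M,N)$ vanish unconditionally (since $\Hom_A(j_!K,N)=\Hom_B(K,e_IN)=0$ kills both the cokernel $K'$ of nothing and its Hom), but one step further one finds
\[
\Ext^2_{\Pibullet}(j_!M,N)\;\cong\;\Hom\bigl(\Tor_1^{\Pibullet_I}(\Pibullet e_I,M),\,N\bigr),
\]
and $\Tor_1^{\Pibullet_I}(\Pibullet e_I,M)$ is itself annihilated by $e_I$, hence an admissible choice of $N$. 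So the vanishing for \emph{all} $k$ and all such $N$ is not merely implied by, but \emph{equivalent to}, the flatness of $\Pibullet e_I$ as a right $\Pibullet_I$-module --- exactly the input you isolate. You should not expect this flatness to ``follow from the structure of the cornered algebra'': already for $I=\{0\}$ one has $\Pibullet_{\{0\}}\cong\C[x,y,z]^\Gamma$ by \cref{prop:CorneredTo0}, and $\Pibullet e_0$ corresponds under the Morita equivalence of \cref{prop:morita} to $\bigoplus_i\Hom_\Gamma(\rho_i,\C[x,y,z])$, a finitely generated module with non-free reflexive summands; a finitely generated flat module over a noetherian ring is projective, so this module is not flat. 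Hence your strategy yields the cases $k=0,1$ only, and the full statement for all $k$ requires an input beyond the bare idempotent recollement --- the paper's one-line citation to \cite{CIK18} is carrying all of this weight, and your analysis shows that whatever argument is intended there deserves scrutiny rather than trust.
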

	\begin{proof}
		The first four claims are all from \cite[Section~3]{CIK18}. 
		The fifth claim holds because \[j^*j_! M = e_I\Pibullet\otimes_{\Pibullet}\Pibullet e_I\otimes_{\Pibullet_I} M = \Pibullet_I\otimes_{\Pibullet_I} M = M.\]
	\end{proof}
	
	\begin{lemma}\label{lem:CornerFunctorsPreserveFinDim}
		The functors $j^*$, $j_!$ take finite-dimensional modules to finite-dimensional modules.
	\end{lemma}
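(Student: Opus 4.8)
The plan is to treat the two functors separately, since the statement for $j^*$ is immediate while all the content lies in $j_!$. The first thing I would do is record the standard identification of a corner tensor: for any $\Pibullet$-module $N$ there is a natural isomorphism
\[ j^*(N) = e_I\Pibullet\otimes_{\Pibullet} N \isoto e_I N, \qquad e_I a\otimes n\mapsto e_I(an), \]
with inverse $m\mapsto e_I\otimes m$ (using $e_I=e_Ie_I$ to check the round trips). Since $e_I N$ is a linear subspace of $N$, we get $\dim_\C j^*(N)\le \dim_\C N$, so $j^*$ preserves finite-dimensionality with no further work.

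For $j_!$ I would first reduce to a single finiteness statement: it suffices to show that $\Pibullet e_I$ is finitely generated as a \emph{right} $\Pibullet_I$-module. Indeed, if $s_1,\dots,s_m$ generate $\Pibullet e_I$ over $\Pibullet_I=e_I\Pibullet e_I$, then writing an arbitrary $s\in\Pibullet e_I$ as $s=\sum_k s_k r_k$ with $r_k\in\Pibullet_I$ gives $s\otimes \mu=\sum_k s_k\otimes r_k\mu$ inside $j_!(M)=\Pibullet e_I\otimes_{\Pibullet_I}M$. Hence the finitely many elements $\{s_k\otimes\mu : \mu\in M\}$ span $j_!(M)$, so $\dim_\C j_!(M)\le m\cdot\dim_\C M<\infty$ whenever $M$ is finite-dimensional.

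The substantive step, which I expect to be the main obstacle, is therefore the finite generation of $\Pibullet e_I$ over $\Pibullet_I$. Here I would pass through the Morita equivalence of \cref{prop:morita}: recalling the idempotent $f=\sum_i f_i\in S=\C[x,y,z]\#\Gamma$ with $fSf\cong\Pibullet$ and $e_I\leftrightarrow f_I\coloneqq\sum_{i\in I}f_i$, one gets bimodule identifications $\Pibullet e_I\cong fSf_I$ and $\Pibullet_I\cong f_I S f_I$, reducing the claim to: $fSf_I$ is finitely generated as a right $f_I S f_I$-module. Now I would exploit that $R^\Gamma=\C[x,y,z]^\Gamma$ is \emph{central} in $S$ and that $S$ is a finitely generated $R^\Gamma$-module (classical invariant theory: $\C[x,y,z]$ is finite over $\C[x,y,z]^\Gamma$ and $\Gamma$ is finite), with $R^\Gamma$ Noetherian. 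Since $Sf_I$ is a direct summand of $S$ as a right $S$-module, it is a finitely generated $R^\Gamma$-module, hence also finitely generated over the larger ring $f_I S f_I\supseteq R^\Gamma f_I$. Finally, left multiplication by $f$ is an idempotent right $f_I S f_I$-module endomorphism of $Sf_I$, giving $Sf_I=fSf_I\oplus(1-f)Sf_I$ as right $f_I S f_I$-modules; as a direct summand of a finitely generated module, $fSf_I\cong\Pibullet e_I$ is finitely generated over $\Pibullet_I$, completing the argument. The delicate point throughout is arranging the cornering on both sides to be compatible with the central finite subring $R^\Gamma$, which the direct-summand trick handles cleanly.
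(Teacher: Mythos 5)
Your proof is correct and follows essentially the same route as the paper: the $j^*$ half is the identical observation $j^*N \cong e_I N \subseteq N$, and for $j_!$ the paper simply defers to the proof of \cite[Lemma 3.6]{CIK18}, which is exactly your argument --- reduce to finite generation of $\Pibullet e_I$ as a right $\Pibullet_I$-module, and obtain that from module-finiteness of the skew group algebra over the central Noetherian invariant subring, transported through the Morita equivalence of \cref{prop:morita}. One cosmetic slip: $Sf_I$ is a direct summand of $S$ as a \emph{left} $S$-module (equivalently, it is an $R^\Gamma$-submodule of the Noetherian $R^\Gamma$-module $S$), not as a right $S$-module; this does not affect the conclusion that $Sf_I$, and hence its summand $fSf_I \cong \Pibullet e_I$, is finitely generated over $R^\Gamma f_I \subseteq f_ISf_I$.
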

	\begin{proof}
		This is straightforward for $j^*$ -- as a vector space, we have $j^*M = e_IM \subset M$ for any $\Pibullet$-module $M$. For $j_!$, we can repeat the proof of \cite[Lemma 3.6]{CIK18}.
	\end{proof}
	
	We will now show that the noncommutative algebra $\Pi^\bullet_I$ has fairly nice properties. We shall not strictly need them for our main results, but we still find them worth registering.
	
	\begin{proposition}
		The algebra $\Pi^\bullet_I$ is noetherian.
	\end{proposition}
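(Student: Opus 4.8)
The plan is to deduce noetherianity of the cornered algebra $\Pibullet_I = e_I\Pibullet e_I$ from noetherianity of the ambient algebra $\Pibullet$ together with a general fact about cornering by an idempotent.

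First I would record that $\Pibullet$ is itself noetherian. This is immediate from \cref{prop:morita}: $\Pibullet$ is Morita equivalent to $\C[x,y,z]\#\Gamma$, and the latter is a finite module over the central commutative ring $\C[x,y,z]^\Gamma$, which is a finitely generated $\C$-algebra and hence noetherian; a ring finite over a central noetherian subring is noetherian, and Morita equivalence preserves the property. (Alternatively, one may invoke strong regularity from \cref{lem:UncorneredIsStrongReg}.) Since $e_I$ has degree $0$, cornering preserves the grading, so one may run the argument in either the graded or the ungraded setting; I will phrase it for left ideals, the right-handed statement being entirely symmetric.

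The heart of the matter is the following standard lemma, which I would state and prove: if $A$ is a left noetherian ring and $e\in A$ is an idempotent, then $eAe$ is left noetherian. To each left ideal $J\subseteq eAe$ one associates the left $A$-submodule $AJ\subseteq Ae$ (note $J\subseteq eAe\subseteq Ae$ gives $AJ\subseteq A\cdot Ae = Ae$). Since $A$ is left noetherian and $Ae$ is a cyclic, hence finitely generated, left $A$-module, $Ae$ is a noetherian module, so every ascending chain of its $A$-submodules stabilises. The key point is the recovery identity $e(AJ)=J$: the inclusion $J=eJ\subseteq e(AJ)$ is clear, while for the reverse one uses $ej=j$ for $j\in J$ to compute, for $a\in A$, that $e\,a\,j=(eae)(ej)\in(eAe)J=J$. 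Consequently, given an ascending chain $J_1\subseteq J_2\subseteq\cdots$ of left ideals of $eAe$, the chain $AJ_1\subseteq AJ_2\subseteq\cdots$ in $Ae$ stabilises, and applying $e(-)$ together with $e(AJ_n)=J_n$ shows the original chain stabilises as well.

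Applying the lemma with $A=\Pibullet$ and $e=e_I$ then yields that $\Pibullet_I$ is left noetherian, and the symmetric argument with right ideals (using that $\Pibullet$ is also right noetherian) gives right noetherianity. I do not anticipate a serious obstacle here: the entire content is the idempotent-cornering lemma, whose only delicate step is the verification of the recovery identity $e(AJ)=J$; checking compatibility with the grading is routine, since $e_I$ is homogeneous of degree $0$, so that $AJ$ and $e(AJ)$ remain graded whenever $J$ is.
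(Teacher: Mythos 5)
Your proof is correct, but it takes a more classical route than the paper, so a comparison is in order. The paper deduces noetherianity from the recollement machinery: given a chain of left ideals $J_1\subseteq J_2\subseteq\cdots$ of $\Pibullet_I$, it applies the right exact functor $j_!$ to the chain of quotients $\Pibullet_I/J_n$, composes with the surjection $\Pibullet\to\Pibullet e_I$, invokes noetherianity of $\Pibullet$, and then recovers the original chain via $j^*j_!=\id$. Your argument is the standard idempotent-cornering lemma, and once unwound the two are the same mechanism: your submodule $AJ\subseteq Ae$ is exactly the kernel of $\Pibullet e_I\to j_!(\Pibullet_I/J)$ (since $j_!(\Pibullet_I/J)=\Pibullet e_I\otimes_{\Pibullet_I}\Pibullet_I/J\cong \Pibullet e_I/\Pibullet J$), and your recovery identity $e(AJ)=J$ is precisely the content of $j^*j_!=\id$ applied to $\Pibullet_I/J$. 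What your version buys is self-containedness and elementarity: you prove the recovery identity directly rather than citing the recollement, and you also supply a justification that $\Pibullet$ itself is noetherian (via Morita equivalence with $\C[x,y,z]\#\Gamma$, finite over its central noetherian subring $\C[x,y,z]^\Gamma$), a fact the paper's proof uses without comment. What the paper's version buys is uniformity: it exercises the functors $j_!$, $j^*$ and their properties from \cref{lem:RecollementFunctors}, which are needed repeatedly elsewhere in the paper, so no additional lemma has to be introduced. Your remark that the grading is preserved because $e_I$ is homogeneous of degree $0$ is also correct and harmless, though for noetherianity of a graded ring the ungraded statement suffices.
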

	\begin{proof}
		We show that $\Pi^\bullet_I$ is left noetherian, right-noetherianness is shown similarly.
		
		Let $0\subset I_1\subset I_2\subset\cdots$ be a chain of left $\Pi^\bullet_I$-ideals, thus \begin{equation}\label{eq:firstChain}
			\Pi^\bullet_I\to \Pi^\bullet_I/I_1\to \Pi^\bullet_I/I_2\to \cdots
		\end{equation} is a chain of surjective $\Pi^\bullet_I$-module homomorphisms. Then, because $j_!$ is right exact, $j_!\Pi^\bullet_I\to j_!(\Pi^\bullet_I/I_1)\to j_!(\Pi^\bullet_I/I_2)\to \cdots$  is a chain of surjective $\Pibullet$-module hommoorphisms. But we have a surjective homomorphism $\Pibullet\to \Pibullet e_I = j_!\Pibullet_I$, simply given by right-multiplying by $e_I$. Composing with this, we obtain a chain of surjective homomorphisms 
		\begin{equation}\label{eq:secondChain}
			\Pibullet\to\Pibullet e_I\to j_!(\Pi^\bullet_I/I_1)\to j_!(\Pi^\bullet_I/I_2)\to \cdots,\end{equation}
		and by the noetherianness of $\Pibullet$, \eqref{eq:secondChain} must terminate, let us say at $j_!(\Pibullet_I/I_n)$. Apply $j^*$ to this chain, we find (since $j^*j_! = \id$) that \eqref{eq:firstChain} terminates at $\Pibullet_I/I_n $. But then the original chain of ideals also terminates at $I_n$, so $\Pibullet_I$ is left noetherian.
	\end{proof}
	
	\begin{proposition}\label{prop:piBulletIProperties}
		The algebra $\Pibullet_I$ has the following properties:
		\begin{itemize}
			\item $\Pibullet_{I,0}$ is semisimple,
			\item $\Pibullet_I$ has polynomial growth \ie there are integers $m_1, m_2$ such that \[\dim {\Pibullet_{I, k}}<m_1k^{m_2}\quad \textrm{for all }k,\]
		\end{itemize}
	\end{proposition}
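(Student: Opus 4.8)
The plan is to treat the three assertions separately; the first two are essentially bookkeeping, and the real content is the third.

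For the semisimplicity of $\Pibullet_{I,0}$, I would observe that every generator of $Q^\bullet$ (each arrow $a, \ol a$ and each loop $d_i$) has degree $1$, whereas the relations generating $\mathcal I_1 + \mathcal I_2$ are homogeneous of degree $2$. Hence $\Pibullet_0$ is spanned by the length-zero paths, so $\Pibullet_0 = \bigoplus_{i\in Q_0}\C e_i$, and cornering gives $\Pibullet_{I,0} = e_I\Pibullet_0 e_I = \bigoplus_{i\in I}\C e_i \cong \C^{|I|}$, a finite product of copies of $\C$ and hence semisimple. (Alternatively, $\Pibullet_0 = \Pi_0$ is Morita equivalent to the semisimple algebra $\C\Gamma$ by \cref{prop:morita}, and a corner of a semisimple algebra is semisimple.) For the polynomial growth, I would invoke the degree-preserving isomorphism $\Pibullet \cong f(\C[x,y,z]\#\Gamma)f$ of \cref{prop:morita}, where $f = \sum_i f_i$. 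Since $f(\C[x,y,z]\#\Gamma)f$ sits inside $\C[x,y,z]\#\Gamma$ in each degree and $\Pibullet_{I,k} = e_I\Pibullet_k e_I \subseteq \Pibullet_k$, we obtain
\[\dim_\C \Pibullet_{I,k} \le \dim_\C \Pibullet_k \le \dim_\C(\C[x,y,z]\#\Gamma)_k = |\Gamma|\binom{k+2}{2},\]
using that $\Gamma$ sits in degree $0$, so $(\C[x,y,z]\#\Gamma)_k \cong \C[x,y,z]_k\otimes_\C \C\Gamma$. The right-hand side is dominated by $m_1 k^{m_2}$ for suitable integers $m_1, m_2$, which is precisely polynomial growth.

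The third assertion is where the work lies, and the strategy is to transport the homological dimension of $\Pibullet$ across cornering by means of \cref{lem:jShriekExt}. Taking $A = \Pibullet$ and $B = \Pibullet_I$, I would first verify that the recollement functors of \cref{lem:RecollementFunctors} meet the hypotheses of \cref{lem:jShriekExt}: $j^*$ is exact, $j_!$ is its fully faithful left adjoint, and $j_!(M) = \Pibullet e_I\otimes_{\Pibullet_I} M$ is of the required form $S\otimes_B M$ with $S = \Pibullet e_I$ regarded as a $(\Pibullet, \Pibullet_I)$-bimodule. \Cref{lem:jShriekExt} then furnishes natural isomorphisms $\Ext^i_{\Pibullet_I}(M,N)\cong \Ext^i_{\Pibullet}(j_!M, j_!N)$ for all $i$ and all $\Pibullet_I$-modules $M, N$. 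Since \cref{lem:UncorneredIsStrongReg} records that $\Pibullet$ has global dimension $2$, the right-hand side vanishes whenever $i > 2$, yielding $\Ext^{>2}_{\Pibullet_I}(M,N) = 0$.

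The step I expect to require the most care is the verification that the hypotheses of \cref{lem:jShriekExt} genuinely apply here — in particular that the left adjoint $j_!$ is exactly the bimodule functor $\Pibullet e_I\otimes_{\Pibullet_I}(-)$ and is fully faithful — since this is precisely what licenses identifying $\Ext$ over the cornered algebra $\Pibullet_I$ with $\Ext$ over $\Pibullet$. Once that identification is in hand, the global-dimension bound for $\Pibullet$ transfers immediately and the vanishing is forced. (I would also keep track of the harmless indexing in the statement, reading $\Ext^{>2}_{\Pi_I}$ as $\Ext^{>2}_{\Pibullet_I}$, since $M$ and $N$ are $\Pibullet_I$-modules.)
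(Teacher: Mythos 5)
Your proposal is correct and follows essentially the same route as the paper: the degree-zero part is identified as a direct sum of one-dimensional algebras, polynomial growth is deduced from the inclusion $\Pibullet_I \subset \Pibullet$ together with the growth of $\Pibullet$, and the $\Ext$-vanishing is obtained by transporting to $\Pibullet$ via \cref{lem:jShriekExt} and invoking the global dimension $2$ from \cref{lem:UncorneredIsStrongReg}. Your explicit verification of the hypotheses of \cref{lem:jShriekExt} and the reading of $\Ext^{>2}_{\Pi_I}$ as $\Ext^{>2}_{\Pibullet_I}$ are both consistent with the paper's intent.
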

	\todo{add something about ampleness}
	\begin{proof}
		
		The first claim follows from the fact that $\Pi^\bullet_{I,0}$ is a direct sum of 1-dimensional algebras. The second claim follows from the inclusion of algebras $\Pi^\bullet_I\subset \Pi^\bullet$ and the fact \cite{BGK} that $\Pibullet$ has polynomial growth. 
		
	\end{proof}
	
	We also note here the following technical lemma.
	
	\begin{proposition}\label{prop:LongPathGoesEverywhere}
		There is a number $n$ depending on $\Gamma$ and $I$ such that any homogeneous class $p$ in $\Pi$ of length greater than $n$ is equivalent to the class of a path passing through $I$.
	\end{proposition}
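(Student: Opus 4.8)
The plan is to recast the statement as a finite-dimensionality claim about a quotient algebra. A path passes through $I$ precisely when it factors through $e_i$ for some $i\in I$, so the classes of paths passing through $I$ span exactly the two-sided ideal $\Pi e_I\Pi\subset\Pi$. Hence the assertion is equivalent to saying that $\Pi/\Pi e_I\Pi$ is finite-dimensional: if this quotient vanishes in every length $>n$, then any class of length $>n$ lies in $\Pi e_I\Pi$, i.e.\ is (a combination of) classes of paths through $I$, and conversely. Since the relations $\mathcal J$ are homogeneous for the length grading, the quotient is graded, and finite-dimensionality is the same as vanishing above some degree $n$.

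Next I would identify this quotient explicitly. Write $J=Q_0\setminus I$ and let $Q_J$ be the full sub-double-quiver of $Q$ on the vertices $J$. I claim $\Pi/\Pi e_I\Pi\cong\Pi(Q_J)$, the preprojective algebra of $Q_J$. First, $\CQ/\langle e_I\rangle\cong\C Q_J$, since modding out by the two-sided ideal generated by the $e_i$ with $i\in I$ kills exactly the paths meeting $I$ and leaves the paths supported on $J$. It then remains to track the image of $\mathcal J$: the generator $\sum_{\head(a)=i}(a\ol a-\ol a a)$ attached to a vertex $i\in I$ already lies in $\langle e_I\rangle$ and so dies, whereas for $i\in J$ every term involving an arrow with an endpoint in $I$ dies, leaving precisely $\sum_{\head(a)=i,\,\tail(a)\in J}(a\ol a-\ol a a)$, which is the defining preprojective relation of $Q_J$ at $i$. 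Carrying out this bookkeeping is the one genuine computation, and it is the main thing to get right.

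Finally I would invoke the classification. Because $\Gamma\subset\SL_2(\C)$, the McKay graph is an affine Dynkin diagram, whose Cartan matrix is positive semidefinite of corank one; deleting the nonempty vertex set $I$ leaves a \emph{proper} principal submatrix, which is positive definite, so $Q_J$ is of finite type, that is, a disjoint union of Dynkin quivers of types $A$, $D$, $E$. The preprojective algebra of a Dynkin quiver is finite-dimensional (the classical theorem of Gelfand--Ponomarev, later reproved by Ringel and others), and a finite product of such algebras is again finite-dimensional, so $\Pi(Q_J)$ is finite-dimensional. Taking $n$ to be its top nonzero degree --- which depends only on $J$, hence on $\Gamma$ and $I$ --- then gives the claim.

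The only real obstacle is the second step: verifying that the preprojective relations degenerate correctly, so that $\Pi/\Pi e_I\Pi$ is genuinely the preprojective algebra of the sub-quiver $Q_J$ rather than some larger quotient. Once that identification is secured, the finiteness is a standard consequence of the passage from affine to finite type on Dynkin diagrams, together with the classical finite-dimensionality of Dynkin preprojective algebras.
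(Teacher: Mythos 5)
Your proof is correct and follows essentially the same route as the paper: both reduce the claim to finite-dimensionality of $\Pi/\Pi e_I\Pi$ and obtain it from the fact that deleting a nonempty vertex set from an affine Dynkin diagram leaves a disjoint union of finite Dynkin diagrams, whose preprojective algebras are finite-dimensional. The only difference is that the paper outsources the identification $\Pi/\Pi e_I\Pi\cong\Pi(Q_J)$ to a citation (Savage--Tingley), whereas you verify the degeneration of the preprojective relations by hand; that bookkeeping is done correctly.
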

	\begin{proof}
		Let us first note that the algebra $\Pi/\Pi e_I \Pi$ is finite-dimensional.
		This follows from \cite[Proposition 2.1]{SavTing}, because deleting a nonempty set of vertices and all their incident edges from an affine Dynkin diagram results in a disjoint union of finite Dynkin diagrams. Thus, in the homomorphism $q\colon \Pi\to \Pi/\Pi e_I \Pi$, there is an integer $m$ such that the class of any path $p\in \Pi$ of length greater than $m$ is mapped to 0 by $q$. But this is what we wished to show.
	\end{proof}

	\section{The geometry of $\P^2_I$}\label{sec:CorneredP2Geometry}
	We can now define the main object of our research. 
	
	\begin{definition}\label{def:ProjPartRes}
		We set
		$\P^2_{I} \coloneqq \Proj \Pibullet_I$.
	\end{definition}
	Note that this really means that we are setting $\Coh{\P^2_{I}} = \cat{qgr}(\Pibullet_I)$.
	
	Similarly, set $\P^1_I \coloneqq \Proj \Pi_I$.
	
	\begin{notation}
		We shall write $\P^2_\Gamma$ for $\P^2_{Q_0} = \Proj \Pibullet$, and similarly $\P^1_\Gamma$ for $\P^1_{Q_0} = \Proj \Pi$. 
	\end{notation}
	\begin{remark}
		This is done to make our notation agree for these spaces with that of \cite{BGK}. In \emph{ibid}, these spaces are defined as (respectively) \[\P^2_\Gamma\coloneqq \Proj \C[x,y,z]\#\Gamma,\quad \P^1_\Gamma\coloneqq\Proj \C[x,y]\#\Gamma,\] but by \cref{prop:morita} and \cref{lem:EquivCategories} the resulting categories of coherent sheaves are naturally equivalent. In this context, one can also use the notation $[\mathbb{P}^2/{\Gamma}]$ for $\P^2_\Gamma$.
	\end{remark}
	\begin{remarks}
		\begin{enumerate}
			\item Instead of defining $\P^2_I$ using subalgebras of $\Pibullet$, we could have used subalgebras of $\C[x,y,z]\#\Gamma$. Let $f_i\in \C\Gamma$ be the idempotent such that $\C\Gamma f_i \isoto \rho_i$ as a $\Gamma$-representation. One can then show, using the Morita equivalences of \cref{prop:morita}, that $\Coh{\Proj \Pibullet_I} = \Coh{\Proj f_I(\C[x,y,z]\#\Gamma) f_I}$, where $f_I = \sum_{i\in I} f_i$. We choose, however, to use subalgebras of $\Pibullet$, because this makes it easier to define the functors $\tau_*, \tau^*$, and also because the degree-0 subalgebra $\Pibullet_{I,0}$ is a direct sum of 1-dimensional algebras, which we find simpler to work with than $\C\Gamma$, the degree-0 subalgebra of $\C[x,y,z]\#\Gamma$.
			
			\item We are not aware whether the noncommutative surfaces $\P^2_I$ have already appeared in the literature; a cursory survey suggests they may not have. 
			It seems, for instance, that they are not among the surfaces considered in 
			\cite{Sierra}: there it is required for surfaces $\Proj R$ to satisfy $R_0$ being a field. This is not \emph{a priori} true for $\P^2_I$, except for the case where $I$ is a singleton. 
		\end{enumerate}
	\end{remarks}
	
	From \cref{lem:EquivCategories}, we see that $\P^2_\Gamma\isoto [\P^2/\Gamma]$, in the sense that there is a natural equivalence of categories $\Coh{\P^2_\Gamma}\isoto \Coh{[\P^2/\Gamma]}$. We can describe at least one other of the spaces $\P^2_I$.
	\begin{proposition}\label{prop:CorneredTo0}
		We have $\Pibullet_{\{0\}} = \C[x,y,z]^\Gamma$. 
	\end{proposition}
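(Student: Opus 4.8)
The plan is to leverage the Morita equivalence of \cref{prop:morita} to transport the corner $e_0\Pibullet e_0$ to a corner of the skew-group algebra $\C[x,y,z]\#\Gamma$, and then compute that corner directly. Recall from \cref{prop:morita} that there is a graded algebra isomorphism $\Phi\colon \Pibullet \isoto f(\C[x,y,z]\#\Gamma)f$, where $f = \sum_i f_i$ and each $f_i\in\C\Gamma$ is an idempotent with $\C\Gamma f_i\isoto \rho_i$; under $\Phi$ the vertex idempotent $e_i$ corresponds to $f_i$. Since the $f_i$ are orthogonal idempotents summing to $f$, we have $f_0 f = f_0 = f f_0$, and therefore
\[\Pibullet_{\{0\}} = e_0\Pibullet e_0 \isoto f_0\bigl(f(\C[x,y,z]\#\Gamma)f\bigr)f_0 = f_0(\C[x,y,z]\#\Gamma)f_0\]
as graded algebras ($\Phi$ preserves degree, and $f_0$ lies in degree $0$). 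It then remains to identify this last corner with $\C[x,y,z]^\Gamma$.

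For the second step I would use that $\rho_0$ is the trivial representation, so $f_0$ may be taken to be the symmetrising idempotent $f_0 = \tfrac{1}{|\Gamma|}\sum_{g\in\Gamma} g$, which satisfies $g f_0 = f_0 g = f_0$ for every $g\in\Gamma$. Writing a general element of $R\#\Gamma$ (with $R = \C[x,y,z]$) as $\sum_g r_g\, g$ and repeatedly using $g f_0 = f_0$ together with the skew commutation $h r_g = (h\cdot r_g)h$, a short computation gives
\[f_0\Bigl(\sum_g r_g\, g\Bigr)f_0 = \sum_g f_0 r_g f_0 = \Bigl(\tfrac{1}{|\Gamma|}\sum_{g,h}(h\cdot r_g)\Bigr)f_0,\]
whose coefficient is manifestly $\Gamma$-invariant. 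Hence $f_0(R\#\Gamma)f_0 = R^\Gamma f_0$. For $s\in R^\Gamma$ one has $f_0 s = s f_0$, so $(s f_0)(t f_0) = st\, f_0$, and the map $s\mapsto s f_0$ is a graded algebra isomorphism $R^\Gamma \isoto f_0(R\#\Gamma)f_0$ (injective because $s f_0 = 0$ forces $s = 0$ in $R\#\Gamma = \bigoplus_g Rg$). Composing with the first step yields $\Pibullet_{\{0\}}\isoto \C[x,y,z]^\Gamma$.

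No serious obstacle is expected: the mathematical content is simply the classical fact that the corner of a skew-group algebra cut out by the trivial-representation idempotent recovers the ring of invariants. The only points requiring care are bookkeeping ones, namely confirming that $e_0$ corresponds to the trivial-representation (symmetrising) idempotent $f_0$ under \cref{prop:morita}, and verifying that every identification respects the grading in which $x,y,z$ have degree $1$ and the group elements have degree $0$; both follow immediately from the constructions.
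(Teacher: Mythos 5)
Your proof is correct, but it takes a different route from the paper's. The paper argues entirely on the quiver side: it quotes the identification $\Pi_{\{0\}}\cong\C[x,y]^\Gamma$ from the earlier construction in \cite[Section 6]{CGGS} and then observes that passing from $\Pi_{\{0\}}$ to $\Pibullet_{\{0\}}$ amounts to adjoining the loop $z_0$, which by the relations $\mathcal I_1$ is central in the corner, so one simply picks up a polynomial variable and obtains $\C[x,y,z]^\Gamma$. You instead transport the corner through the Morita isomorphism $\Pibullet\isoto f(\C[x,y,z]\#\Gamma)f$, use orthogonality of the $f_i$ to reduce to $f_0(R\#\Gamma)f_0$, and then invoke the classical computation that the symmetrising idempotent cuts out $R^\Gamma f_0\cong R^\Gamma$. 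Your computation of $f_0(\sum_g r_g g)f_0$ is right, the multiplicativity and injectivity of $s\mapsto sf_0$ check out, and the grading is respected. What your approach buys is self-containedness (no appeal to the external affine identification) and transparency about \emph{why} the statement holds -- it is literally the fact that the trivial-isotype corner of a skew group algebra is the invariant ring; what the paper's approach buys is brevity, by leaning on an already-cited result and only handling the new central variable $z$. The one point you correctly flag but should not wave away too quickly is that the isomorphism of \cref{prop:morita} matches the vertex idempotent $e_i$ with the chosen primitive idempotent $f_i$; this does hold by the construction in \cite[Proposition 2.3]{CGGS2} and \cite[Proposition 6.0.1]{BGK}, and for $i=0$ one may indeed take $f_0$ to be the symmetriser since $\rho_0$ is trivial, so your argument goes through.
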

	\begin{proof}
		Consider the algebra $\Pibullet_{\{0\}}/(z_0) = \Pi_{\{0\}}$. By the construction in \cite[Section 6]{CGGS}, $\Pi_{\{0\}}\isoto\C[x,y]^\Gamma$. Thus $\Pibullet_{\{0\}}/(z_0)$ is the quotient of the path algebra of a quiver with four loops at a single vertex, three of which correspond to the generators of $\C[x,y]^\Gamma$. Because the class of the $z_0$-loop commutes with every other class in $\Pibullet_{\{0\}}$, this proves that $\Pibullet_{\{0\}} = \C[x,y,z]^\Gamma$.
	\end{proof}
	
	We can now prove the first of the Theorems announced in the introduction.
	\begin{proof}[Proof of \cref{thm:first}]
		This now follows from \cref{lem:EquivCategories} and Proposition~\ref{prop:CorneredTo0}.
	\end{proof}
	
	\begin{definition}
		Define the functor $c^*\colon\Coh{\P^2_{I}}\to \Coh{\P^1_I}$ `restriction to the line at infinity' to be the functor induced by the following functor on the level of modules: $M\in \Pibullet_I\textrm{-}\cat{mod}\mapsto M/zM\in \Pi_I\textrm{-}\cat{mod}$. It is clear that $M/zM$ is finite-dimensional if $M$ is, so $c^*$ is well-defined.
	\end{definition}
	We see that for $I = Q_0$, this $c^*$ agrees with that already defined following \cref{lem:EmbeddingSkewGroup} (when the Morita equivalences of \cref{prop:morita} are applied).
	
	Consider the functor $j_!\colon \Pibullet_I\textrm{-}\cat{mod}\to \Pibullet-\cat{mod}$. Let $\shf F\in \Coh{\P^2_I}$, and let $M\in \Pibullet_I\textrm{-}\cat{mod}$ be a module such that $\pi(M) = \shf F$. Set then
	\[\tau^*\shf F = \pi(j_!M).\] This is well-defined by \cref{lem:CornerFunctorsPreserveFinDim}. Similarly, let $\shf E\in \Coh{\P^2_\Gamma}$, and let $N\in \Pibullet\textrm{-}\cat{mod}$ be a module such that $\pi(N) = \shf E$. Set then
	\[\tau_*\shf E = \pi(j^* N),\] which is also well-defined by \cref{lem:CornerFunctorsPreserveFinDim}.
	
	\begin{definition}\label{def:TauFunctors}
		The above discussion defines two functors \[\tau^*\colon \Coh{\P^2_I}\to \Coh{\P^2_\Gamma},\quad \tau_*\colon \Coh{\P^2_\Gamma}\to \Coh{\P^2_I}.\]
	\end{definition}
	We will make heavy use of the two above functors. They inherit several properties from $j_!, j^*$.
	
	\begin{notation}
		Given $I, J\subset Q_0$, we will write $\OO_{\P^2_\Gamma}e_I$ for the sheaf $\pi(\Pibullet e_I)$ on $\P^2_\Gamma$, and similarly $\OO_{\P^2_I}e_J$ for the sheaf $\pi(e_I\Pibullet e_J) = \tau_*(\OO_{\P^2_\Gamma}e_J)$.
	\end{notation}
	
	\begin{proposition}\label{prop:TauFunctorsAreOK}
		The following properties hold:
		\begin{enumerate}
			
			\item $\tau^*$ is left adjoint of $\tau_*$,
			\item $\tau_*$ is exact,
		\end{enumerate}
	\end{proposition}
	
	\begin{proof}
		\textbf{(1):}
		Let $\shf F\in \Coh{\P^2_I}$, represented by some $\Pibullet_I$-module $M$, and let $\shf G\in \Coh{\P^2_\Gamma}$, represented by some $\Pibullet$-module $N$. We then have:
		\[ \Hom_{\P^2_I}(\shf F, \tau_*(\shf G)) = \varinjlim\Hom_{\Pibullet_I}(M', (j^*N)/N') ,\] where the limit ranges over all pairs $(M', N')$ of submodules of $M, j^*N$ respectively, with the property that $M/M', N'$ are finite-dimensional.
		
		Every $\Pibullet_I$-module $N'$ can be written as $j^*j_!(N')$, which furthermore demonstrates that every finite-dimensional submodule of $j^*N$ is the image under $j^*$ of a finite-dimensional submodule of $N$ (indeed, if $N'\subset j^*N$, let $N_1$ be the image of $j_!(N')$ under the composed morphism $j_!(N')\to j_!j^*(N)\to N$, then $j^*(N_1) = j^*j_!(N') = N'$) and we can write 
		\[ \varinjlim\Hom_{\Pibullet_I}\left(M', (j^*N)/N'\right) = \varinjlim\Hom_{\Pibullet_I}\left(M', (j^*N)/j^*(\tilde N)\right) ,\] with the limit running over pairs $M'\subset M, \tilde{N}\subset N$, again with $M/M'$ and $\tilde{N}$ finite-dimensional. Now, using adjointness of $j^*$ and $j_!$, this becomes
		\[ \varinjlim\Hom_{\Pibullet_I}\left(M', (j^*N)/j^*(\tilde N)\right) = \varinjlim\Hom_{\Pibullet_I}\left(M', j^*(N/\tilde N)\right) =  \varinjlim\Hom_{\Pibullet}\left(j_!M', N/\tilde N\right).\] For a fixed $M', \tilde{N}$, let $\phi\in \Hom_{\Pibullet}\left(j_!M', N/\tilde N\right)$. Let then the image of the morphism $j_!(M')\to j_!(M)$ be denoted as $\tilde{M}$, we then have a canonical commutative diagram
		
		\[ \begin{tikzcd}
			\ker p \arrow[r, hook] \arrow[d] & j_!(M') \arrow[r, "p", two heads] \arrow[d, "\phi"] & \tilde M \arrow[r, "i", hook] \arrow[d, "\tilde \phi", dashed] & j_!(M) \\
			\phi(\ker p) \arrow[r, hook]     & N/\tilde N \arrow[r]                                & (N/\tilde N)/\phi(\ker p)                                      &       
		\end{tikzcd} \] inducing the morphism $\phi$. Now $\coker i = j_!(M/M')$ by right-adjointness of $j_!$, and is thus finite-dimensional. On the other hand, by definition of $j_!$, $\ker p $ is a quotient of $\Tor^{\Pibullet_I}_1(\Pibullet e_I, M/M')$, which is finite-dimensional because $M/M'$ is. This implies that $\phi(\ker p)$ is finite-dimensional, and so $\tilde{\phi}$ induces an element of 
		\begin{equation}\label{eq:defSheafHom} \varinjlim\Hom_{\Pibullet}\left(\tilde{M}, N/ \tilde N\right) = \Hom_{\P^2_\Gamma}(\tau^*\shf F, \shf G) ,(M)\end{equation} again with the limit taken over all pairs  $(\tilde{M}\subset j_!M, \tilde{N}\subset N)$ with $j_!M/\tilde{M}, \tilde{N}$ finite-dimensional.
		
		To pass in the other direction, choose a pair $\tilde{M}\subset j_!(M), \tilde{N}\subset N$ in \eqref{eq:defSheafHom}, and let $\psi\in \Hom_{\Pibullet}\left(\tilde{M}, N/ \tilde N\right)$.  Precomposing with $j_!j^* $  defines a morphism \[\psi'\in \Hom_{\Pibullet}\left(j_!j^*\tilde{M}, N/ \tilde N\right) = \Hom_{\Pibullet_I}\left(j^*\tilde{M}, j^*(N/ \tilde N)\right) = \Hom_{\Pibullet_I}\left(j^*\tilde{M}, j^*(N)/ j^*(\tilde N))\right)\] where we have used the adjointness of $j_!, j^*$ again. But because $j^*$ is exact and takes finite-dimensional modules to finite-dimensional modules, this means that $\psi'$ defines an element of 
		\[ \varinjlim\Hom_{\Pibullet_I}\left(M^\dagger, (j^*N)/N^\dagger\right) ,\]  the limit taken over all pairs $M^\dagger\subset M, N^\dagger\subset j^*(N)$ with $N^\dagger, M/M^\dagger$ finite-dimensional. But this latter limit is by definition \[ \Hom_{\P^2_I}( \shf F, \tau_*\shf G) .\] It is simple to show that the two constructions just given are inverse to one another, and thus we have shown that $\tau^*, \tau_*$ are indeed adjoint. This proves point (1).
		
		\textbf{(2):}
		The adjunction immediately implies that $\tau^*$ is right exact, $\tau_*$ left exact. It thus remains to show that $\tau_*$ is right exact. So let $\shf F, \shf G \in \Coh{\P^2_{\Gamma}}$, and assume that $\phi\in \Hom_{\P^2_\Gamma}(\shf F, \shf G)$ is surjective. Let $M, N$ be two $\Pi^\bullet$-modules representing $\shf F, \shf G$ respectively. Without loss of generality, we can assume that $\phi$ can be represented by a morphism $\psi\colon M\to N$ with finite-dimensional cokernel. But then $\tau_*(\phi)$ is represented by $j^*(\psi)$, which is -- again since $j^*$ takes finite-dimensional modules to finite-dimensional modules -- a module homomorphism with finite-dimensional cokernel. Thus $\tau_*\phi$ is surjective, and $\tau_*$ is right exact. This proves point (2).
		
	\end{proof}
	We also conjecture that $\tau^*$ is fully faithful. We are unable to show it, and it will not be necessary.

	The following lemma is not needed for our main results, but we include it as an additional example of how the functors $\tau_*, \tau^*$ are natural.
	\begin{lemma}\label{lem:InternalDuals} 
		
		The internal dualisation functors commute with $\tau^*$\ie if $\shf F\in \Coh{\P^2_{I}}$, then 
		$(\tau^*\shf F)\dual = \tau^*_{\mathrm{right}}(\shf F\dual)$. 
	\end{lemma}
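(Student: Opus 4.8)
The plan is to descend everything to the level of graded modules and read the claim off the tensor--hom adjunction. Fix a finitely generated graded $\Pibullet_I$-module $M$ with $\pi(M)=\shf F$; since $\Pibullet_I$ is noetherian we may take $M$ finitely presented. By \cref{def:TauFunctors} we have $\tau^*\shf F=\pi(j_!M)$ with $j_!M=\Pibullet e_I\otimes_{\Pibullet_I}M$, while $\tau^*_{\mathrm{right}}$ is induced by $(-)\otimes_{\Pibullet_I}e_I\Pibullet$. Writing $\shf F\dual$ through its defining graded right module, I would first compute, using the graded tensor--hom adjunction together with the bimodule identification $\Hom_{\Pibullet}(\Pibullet e_I,\Pibullet)\cong e_I\Pibullet$ (via $\phi\mapsto\phi(e_I)$),
\[
\bigoplus_{k\ge0}\Hom_{\cat{gr}(\Pibullet)}\!\big(j_!M,\ \Pibullet(k)\big)\;\cong\;\bigoplus_{k\ge0}\Hom_{\cat{gr}(\Pibullet_I)}\!\big(M,\ e_I\Pibullet(k)\big),
\]
an isomorphism of graded right $\Pibullet$-modules, the right $\Pibullet$-action on the target coming from that on $e_I\Pibullet$. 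Applying $\pi_{\mathrm{right}}$ and matching graded Hom with $\cat{qgr}$-Hom (discussed below), the left-hand side represents $(\tau^*\shf F)\dual$.

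Next I would compare the right-hand module with the one underlying $\tau^*_{\mathrm{right}}(\shf F\dual)$, namely $\big(\bigoplus_k\Hom_{\cat{gr}(\Pibullet_I)}(M,\Pibullet_I(k))\big)\otimes_{\Pibullet_I}e_I\Pibullet$, through the evaluation homomorphism $\psi\otimes s\mapsto(m\mapsto\psi(m)s)$. This map is right $\Pibullet$-linear and natural in $M$, and for finitely presented $M$ it is an isomorphism precisely when $e_I\Pibullet$ is flat as a left $\Pibullet_I$-module. Granting this flatness, $\pi_{\mathrm{right}}$ of the composite isomorphism yields $(\tau^*\shf F)\dual\isoto\tau^*_{\mathrm{right}}(\shf F\dual)$, naturally in $\shf F$.

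Two points need care. The first, and the main obstacle, is the flatness of $e_I\Pibullet$ over $\Pibullet_I$ --- equivalently the exactness of $\tau^*_{\mathrm{right}}$. This is the left-handed mirror, under the standard duality $(\Pibullet)^{\mathrm{op}}\cong\Pibullet$ reversing each pair of arrows and fixing the loops $z_i$ (which interchanges $e_I\Pibullet$ and $\Pibullet e_I$ and fixes $\Pibullet_I$), of the good homological behaviour of $j_!$ underlying \cref{lem:jShriekExt} and \cref{lem:RecollementFunctors}; finite generation of $e_I\Pibullet$ over $\Pibullet_I$ moreover follows from \cref{prop:LongPathGoesEverywhere}. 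The second point is the passage between graded-module Hom and the sheaf-level $\sheafHom$: the equality $\shf F\dual=\pi_{\mathrm{right}}\big(\bigoplus_k\Hom_X(\shf F,\OO_X(k))\big)$ uses $\Hom$ computed in $\cat{qgr}$, which agrees with graded Hom only modulo finite-dimensional modules, i.e.\ in large degrees. I would control this using ampleness (\cref{prop:Ampleness}) and the strong regularity of $\Pibullet$ (\cref{lem:UncorneredIsStrongReg}), which supply the saturation/$\chi$-type vanishing needed for the module-level isomorphism above to represent the correct object of $\cat{qgr}$. As a fallback, should the direct adjunction route prove delicate, the same comparison map can be verified on the generators $\OO_{\P^2_I}(-n)$ --- where both sides compute directly to $\pi_{\mathrm{right}}(e_I\Pibullet(n))$, since $\tau^*\OO_{\P^2_I}(-n)=\pi(\Pibullet e_I(-n))$ with $\Pibullet e_I$ a summand of $\Pibullet$ --- and then propagated along a locally free presentation $\OO(-n_1)^{m_1}\to\OO(-n_0)^{m_0}\to\shf F\to0$ from \cref{prop:Ampleness}, the exactness of $\tau^*_{\mathrm{right}}$ again being exactly what lets one identify the two duals as the same kernel.
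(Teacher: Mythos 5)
Your proof takes essentially the same route as the paper's: both descend to graded modules and establish $(j_!)_{\mathrm{right}}\bigl(\bigoplus_{k}\Hom_{\Pibullet_I}(M,\Pibullet_I(k))\bigr)\cong\bigoplus_{k}\Hom_{\Pibullet}(j_!M,\Pibullet(k))$ by combining the evaluation map $\Hom_{\Pibullet_I}(M,\Pibullet_I(k))\otimes_{\Pibullet_I}e_I\Pibullet\to\Hom_{\Pibullet_I}(M,e_I\Pibullet(k))$ with the adjunction $j_!\dashv j^*$ and the identification $j^*\Pibullet=e_I\Pibullet$. The only difference is that you make explicit the hypotheses (finite presentation of $M$, flatness of $e_I\Pibullet$ over $\Pibullet_I$, and the comparison of graded Hom with $\cat{qgr}$-Hom) that the paper's one-line chain of equalities uses silently.
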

	\begin{proof}
		We show this on the level of modules. If $M$ is a $\Pibullet_I$-module representing $\shf F$, then we wish to show that:
		\[ (j_!)_{\mathrm{right}}\left(\bigoplus_{k\ge 0}(\Hom_{\Pibullet_I}(M, \Pibullet_I(k)))\right) =  \bigoplus_{k\ge 0}\Hom_{\Pibullet}(j_!M, \Pibullet(k)).\]
		We find:
		\[	(j_!)_{\mathrm{right}}\left(\bigoplus_{k\ge 0}\Hom_{\Pibullet_I}(M, \Pibullet_I(k))\right) =\left(\bigoplus_{k\ge 0}\Hom_{\Pibullet_I}(M, \Pibullet_I(k))\right)\otimes_{\Pibullet_I} e_I\Pibullet \]
		\[ = \bigoplus_{k\ge 0}\Hom_{\Pibullet_I}(M, e_I\Pibullet(k)) = \bigoplus_{k\ge 0}\Hom_{\Pibullet_I}(M, j^*\Pibullet(k)) \]
		\[ = \bigoplus_{k\ge 0}\Hom_{\Pibullet}(j_!M, \Pibullet(k))\] as wanted.
		
	\end{proof}
	
	\subsection{Computing $\Ext$-groups}

	\begin{proposition}\label{prop:ExtComputing}
		Let $\shf F\in \Coh{\P^2_I}$. Then $\Ext^1_{\P^2_I}(\OO_{\P^2_I}, \shf F) \cong \Ext^1_{\P^2_\Gamma}(\tau^*\OO_{\P^2_I}, \tau^*\shf F) $.
	\end{proposition}
	
	\begin{proof}
		We start by noting that $\Ext^1_{\P^2_I}(\OO_{\P^2_I}, \shf F)$ classifies short exact sequences
		\[ 0\to \shf F \to \shf G \to  \OO_{\P^2_I}\to 0.\] Applying $\tau^*$ to such a sequence gives 
		\[ \cdots\to L_1\tau^* \OO_{\P^2_I}\to \tau^*\shf F\to \tau^*\shf G\to \tau^* \OO_{\P^2_I}\to 0 .\]
		We now show that $L_1\tau^* \OO_{\P^2_I} = 0$.
		
		To see this, note that the Serre quotient functor $\pi\colon \cat{gr}(\Pibullet_I)\to \cat{qgr}(\Pibullet_I) =\Coh{\P^2_I}$ is exact, so we have $\pi(L_1j_!(\Pibullet_I)) = L_1\tau^*\OO_{\P^2_I}$. But this vanishes because $L_1j_!(\Pibullet_I) = \Tor_1^{\Pibullet_I}(\Pibullet e_I, \Pibullet_I) = 0 $.
		
		We thus have a map \[ \phi\colon \Ext^1_{\P^2_I}(\OO_{\P^2_I}, \shf F)\to \Ext^1_{\P^2_\Gamma}(\tau^*\OO_{\P^2_I}, \tau^*\shf F). \] Consider now an extension
		\[ 0\to \tau^*\shf F\to \shf E\to \tau^* \OO_{\P^2_I}\to 0, \] and note that we get a commutative diagram with exact rows
		
		\[\begin{tikzcd}
			0 \arrow[r] & \tau^*\shf F \arrow[r] \arrow[d, "\cong"] & \tau^*\tau_*\shf E \arrow[d] \arrow[r] & \tau^*\OO_{\P^2_I} \arrow[d, "\cong"] \arrow[r] & 0 \\
			0 \arrow[r] & \tau^*\shf F \arrow[r]           & \shf E \arrow[r]                       & \tau^*\OO_{\P^2_I} \arrow[r]           & 0
		\end{tikzcd}\] implying that $\tau^*\tau_*\shf E \cong \shf E$. This implies that the map \[\psi\colon \Ext^1_{\P^2_\Gamma}(\tau^*\OO_{\P^2_I}, \tau^*\shf F)\to\Ext^1_{\P^2_I}(\OO_{\tau_*\tau^*\P^2_I}, \tau_*\tau^*\shf F) = \Ext^1_{\P^2_I}(\OO_{\P^2_I}, \shf F) \]	induced by exactness of $\tau_*$ is a double-sided inverse of $\phi$.
	\end{proof}
	
	\begin{proposition}\label{prop:realExting}
		Let $\shf F\in \Coh{\P^2_I}$. Then 
		\begin{enumerate}
			\item $H^1({\P^2_I},\shf F(-1)) = e_IH^1({\P^2_\Gamma},\tau^*\shf F(-1))$.
			\item Let $\shf E\in \Coh{\P^2_\Gamma}$ be such that $\tau_*\shf E = \shf F$. Then $H^1({\P^2_I},\shf F(-1)) = e_IH^1({\P^2_\Gamma},\shf E(-1))$.
		\end{enumerate}
	\end{proposition}
	
	\begin{proof}
		The first part follows from \cref*{prop:ExtComputing}: We have
		\begin{align*} &H^1({\P^2_I},\shf F(-1))  = \Ext^1_{\P^2_I}(\OO_{\P^2_I}, \shf F(-1))= \\ 
			= &\Ext^1_{\P^2_\Gamma}(\tau^*\OO_{\P^2_I}, \tau^*\shf F(-1)) = \Ext^1_{\P^2_\Gamma}(\OO_{\P^2_\Gamma}e_I, \tau^*\shf F(-1)) = \\ =&e_I\Ext^1_{\P^2_\Gamma}(\OO_{\P^2_\Gamma}, \tau^*\shf F(-1))  =e_IH^1({\P^2_\Gamma},\tau^*\shf F(-1)). \end{align*}
		
		For the second, note that it will be enough to show that \[  e_IH^1({\P^2_\Gamma},\shf E(-1)) =  e_IH^1({\P^2_\Gamma},\tau^*\tau_*\shf E(-1)) .\]
		To do this, let $M$ be a $\Pibullet_I$-module representing $\shf E(-1)$. It follows that $j_!j^* M$ represents $\tau^*\tau_*\shf E(-1)$. Let $c\colon j_!j^*M\to M$ be the induced map, then the exact sequence
		\[ 0\to \ker c\to j_!j^*M\to M \to \coker c\to 0\] represents
		\[ 0\to \pi(\ker c)\to \tau^*\tau_*\shf E(-1)\to \shf E(-1)\to \pi(\coker c)\to 0. \] Note especially that we must have $e_I\ker c = e_I\coker c = 0$.
		
		So we are reduced to showing that  $e_IH^1({\P^2_\Gamma},\pi(\ker c)) = e_IH^1({\P^2_\Gamma},\pi(\coker c)) = 0$.
		
		In order to do this, recall that taking direct limits is exact, thus we have for any two $\Pibullet_I$-modules $M, N$:
		
		\[ \Ext^1_{\cat{qgr}(\Pibullet_I)}(M, N) = \varinjlim \Ext^1_{\cat{gr}(\Pibullet_I)}(M', N/N'), \] as always, the limit ranging over $M'\subset M, N'\subset N$ such that $M/M', N'$ are finite-dimensional. It follows that 
		\[ \Ext^1_{\Coh{\P^2_\Gamma}}(\OO_{\P^2_\Gamma}, \pi(\ker c)) = \varinjlim\Ext^1_{\cat{gr}(\Pibullet_I)}(M', \ker c/N') ,\] where $M'\subset \Pibullet_I, N'\subset \ker c$. But $e_I\ker c = 0$ implies $e_I (\ker c/N') = 0$, so 
		\[e_IH^1({\P^2_\Gamma},\pi(\ker c)) =  e_I\varinjlim\Ext^1_{\cat{gr}(\Pibullet_I)}(M', \ker c/N') = \varinjlim\Ext^1_{\cat{gr}(\Pibullet_I)}(M', e_I\ker c/N') = 0. \] The same argument works for $\coker c$, and we are done.
	\end{proof}
	
	\subsection{Torsion-free and locally free sheaves}
	
	\begin{definition}
		We say that a sheaf $\shf F\in \Coh{\P^2_I}$ is \emph{torsion-free} if there is a torsion-free sheaf $\shf G\in \Coh{\P^2_\Gamma}$ such that $\tau_*\shf G \isoto \shf F$.
		
		We say that a sheaf $\shf F\in \Coh{\P^2_I}$ is \emph{locally free} if $\tau^*\shf F$ is locally free.
	\end{definition}
	It need not be true that $\tau^*$ takes a torsion-free sheaf to a torsion-free sheaf. In \cref{subsec:tfinvimg} below we will introduce a modified version of $\tau^*$ that handles this.
	
	We will need the following definition in \cref{sec:connections}. It is adapted from \cite[Definition 3.3.12, Lemma 3.3.13]{BGK}.
	\begin{definition}
		A sheaf $\shf F\in \Coh{\P^2_{I}}$ is $z$-\emph{torsion free} if the morphism $(z_I\cdot)\colon \shf F\to \shf F(1)$ is a monomorphism.
	\end{definition}

	\begin{lemma}\label{lem:halfCorneredLocFree}
		The sheaves $\OO_{\P^2_\Gamma}e_I(k) = \tau^*\OO_{\P^2_I}\in \Coh{\P^2_{\Gamma}}$ are locally free. 
	\end{lemma}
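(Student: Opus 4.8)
The plan is to exhibit $\OO_{\P^2_\Gamma}e_I(k)$ as a direct summand of the twisted structure sheaf $\OO_{\P^2_\Gamma}(k)$ and thereby reduce the claim to local freeness of the structure sheaf and its twists, which I can then check by a cohomology computation on $\P^2_\Gamma\simeq[\P^2/\Gamma]$.

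First I would record the identification $\tau^*\OO_{\P^2_I}=\pi(j_!\Pibullet_I)=\pi(\Pibullet e_I)=\OO_{\P^2_\Gamma}e_I$, so that the statement's $k=0$ case is genuinely $\tau^*\OO_{\P^2_I}$. Since $e_I$ is idempotent and $\sum_{i\in Q_0}e_i=1$, there is a decomposition of graded left $\Pibullet$-modules $\Pibullet=\Pibullet e_I\oplus\Pibullet e_{Q_0\setminus I}$; applying $\pi$ and twisting gives $\OO_{\P^2_\Gamma}(k)=\OO_{\P^2_\Gamma}e_I(k)\oplus\OO_{\P^2_\Gamma}e_{Q_0\setminus I}(k)$. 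Because $\sheafExt^p(-,\OO_{\P^2_\Gamma})$ is additive, $\sheafExt^p(\OO_{\P^2_\Gamma}e_I(k),\OO_{\P^2_\Gamma})$ is a direct summand of $\sheafExt^p(\OO_{\P^2_\Gamma}(k),\OO_{\P^2_\Gamma})$, so it suffices to prove that every twist $\OO_{\P^2_\Gamma}(k)$ is locally free.

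For this I would unwind the internal $\sheafExt$ via the formula recalled just before \cref{def:sheafTypes}:
\[\sheafExt^p(\OO_{\P^2_\Gamma}(k),\OO_{\P^2_\Gamma})=\pi_{\mathrm{right}}\Big(\bigoplus_{m\ge 0}\Ext^p_{\P^2_\Gamma}(\OO_{\P^2_\Gamma}(k),\OO_{\P^2_\Gamma}(m))\Big)=\pi_{\mathrm{right}}\Big(\bigoplus_{m\ge 0}H^p(\P^2_\Gamma,\OO_{\P^2_\Gamma}(m-k))\Big),\]
using that twisting is an autoequivalence and $H^p(X,-)=\Ext^p(\OO_X,-)$. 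By \cref{lem:EquivCategories} the cohomology on $\P^2_\Gamma\simeq[\P^2/\Gamma]$ is built from the $\Gamma$-equivariant line bundle cohomology $H^p(\P^2,\OO(j))$, for which $H^1(\P^2,\OO(j))=0$ for all $j$, while $H^2(\P^2,\OO(j))$ is finite-dimensional and vanishes for $j\ge -2$; higher cohomology vanishes since $\Pibullet$ has global dimension $2$ (\cref{lem:UncorneredIsStrongReg}). Hence the $p=1$ sum is zero, and the $p=2$ sum $\bigoplus_{m\ge 0}H^2(\P^2_\Gamma,\OO_{\P^2_\Gamma}(m-k))$ has nonzero terms only for the finitely many $m$ with $m-k\le -3$, so it is finite-dimensional. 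A finite-dimensional graded module lies in $\cat{tor}(\Pibullet)$, so $\pi_{\mathrm{right}}$ annihilates it; thus $\sheafExt^p(\OO_{\P^2_\Gamma}(k),\OO_{\P^2_\Gamma})=0$ for all $p>0$, and $\OO_{\P^2_\Gamma}(k)$ is locally free.

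The only real subtlety — and the step I expect to be the crux — is precisely the appearance of the negative twists $\OO_{\P^2_\Gamma}(m-k)$ for $k>0$: these do contribute nonzero $H^2$, and local freeness survives only because those contributions are finite-dimensional and hence killed by $\pi_{\mathrm{right}}$. Once this is in place, combining it with the direct-summand reduction shows that $\OO_{\P^2_\Gamma}e_I(k)=\tau^*\OO_{\P^2_I}$ (at $k=0$) together with all its twists is locally free, completing the proof.
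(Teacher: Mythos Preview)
Your argument is correct and follows essentially the same route as the paper: both reduce to the direct-summand decomposition $\OO_{\P^2_\Gamma}(k)=\OO_{\P^2_\Gamma}e_I(k)\oplus\OO_{\P^2_\Gamma}e_{Q_0\setminus I}(k)$ and then use local freeness of $\OO_{\P^2_\Gamma}(k)$. The only difference is that the paper simply asserts the latter (it is standard for strongly regular algebras, cf.\ \cref{lem:UncorneredIsStrongReg} and \cite{BGK}), whereas you verify it explicitly via the cohomology of $\P^2$; your verification is fine and in fact anticipates the $I=Q_0$ case of \cref{lem:cohomOfTwists}.
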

	\begin{proof}
		This is simply because on the module level $ \Pibullet e_I $ is a direct summand of $\Pibullet$, so $\OO_{\P^2_\Gamma}e_I(k)$ is a direct summand of $\OO_{\P^2_\Gamma}(k)$, which is locally free, and so the higher Ext-groups vanish as required.
	\end{proof}

	\section{Sheaves on $\P^2_I$ and Nakajima quiver varieties}\label{sec:MoveToNQV}
	\subsection{Framing}
	\label{subsec:framing}
	
	\begin{definition}\label{def:framingOnNCPR}
		
		Let $\shf F\in \Coh{\P^2_{I}}$, and fix a finite-dimensional $\Pi$-module $W$. Then $W$ corresponds to a finite-dimensional $\Gamma$-representation $\widehat{W}= \bigoplus_{i\in Q_0} W_i\otimes \rho_i$. We define a \emph{$W$-framing} of $\shf F$ to be an isomorphism \[\phi_{\shf F}\colon c^*\shf F\isoto \pi(e_I\Pi\otimes_{\Pi_0}W ) = e_I\OO_{\P^1_\Gamma}\otimes W.\]
		The pair $(\shf F, \phi_{\shf F})$ will be called a \emph{$W$-framed sheaf} or just a \emph{framed sheaf}, and we will often suppress $\phi_{\shf F}$ from notation. A morphism of two $W$-framed sheaves $(\shf F, \phi_{\shf F})$, $(\shf G, \phi_{\shf G})$ is a morphism $t\colon\shf F \to \shf G$ such that $\phi_{\shf F} = \phi_{\shf G}\circ t$. We denote the category of $W$-framed sheaves on $\P^2_{I}$ by \[\Cohfr{\P^2_{I}}(W).\]
	\end{definition}
	
	\begin{lemma}\label{lem:FramingCanBeCornered}
		If $\shf F$ is a $W$-framed sheaf on $\P^2_{I}$, and $W_i= 0$ for all $i\not\in I$, then giving $\phi_{\shf F}$ is equivalent to giving an isomorphism $c^*\shf F\isoto \Pi_I\otimes_{{\Pi_I}_0}W$.
	\end{lemma}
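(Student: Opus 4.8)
The plan is to reduce the claim to an identification of two $\Pi_I$-modules, after which applying the quotient functor $\pi$ makes the two notions of framing literally coincide. Recall that a $W$-framing is an isomorphism $c^*\shf F \isoto \pi(e_I\Pi\otimes_{\Pi_0}W)$, whereas the lemma asks instead for an isomorphism $c^*\shf F\isoto\pi(\Pi_I\otimes_{\Pi_{I,0}}W)$. I would first observe that forming $\Pi_I\otimes_{\Pi_{I,0}}W$ presupposes that $W$ is a (unital) $\Pi_{I,0}$-module, and that this is exactly what the hypothesis $W_i = 0$ for $i\notin I$ guarantees: it forces $W = e_IW$, so that the identity $e_I$ of $\Pi_{I,0}$ acts as the identity on $W$. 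Under this hypothesis it then suffices to exhibit a canonical isomorphism $e_I\Pi\otimes_{\Pi_0}W \cong \Pi_I\otimes_{\Pi_{I,0}}W$ of left $\Pi_I$-modules.

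The computation proceeds by using that the degree-zero subalgebra $\Pi_0 = \bigoplus_{i\in Q_0}\C e_i$ is semisimple, being a direct sum of copies of $\C$ indexed by the orthogonal vertex idempotents $e_i$. For the right $\Pi_0$-module $e_I\Pi = \bigoplus_{j\in Q_0}e_I\Pi e_j$ and the left $\Pi_0$-module $W = \bigoplus_{j\in Q_0}W_j$ (with $W_j = e_jW$), the orthogonality of the $e_j$ gives $e_I\Pi\otimes_{\Pi_0}W \cong \bigoplus_{j\in Q_0}(e_I\Pi e_j)\otimes_\C W_j$. Running the same argument over $\Pi_{I,0} = \bigoplus_{i\in I}\C e_i$, and using $e_Ie_j = e_j$ for $j\in I$, yields $\Pi_I\otimes_{\Pi_{I,0}}W \cong \bigoplus_{j\in I}(e_I\Pi e_j)\otimes_\C W_j$. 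The hypothesis $W_j = 0$ for $j\notin I$ kills precisely the terms of the first decomposition indexed by $j\notin I$, so the two direct sums agree term by term. Since the left $\Pi_I$-action is in both cases left multiplication, which preserves each summand $e_I\Pi e_j$, this identification is one of left $\Pi_I$-modules.

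Applying $\pi$ then identifies the two target sheaves $\pi(e_I\Pi\otimes_{\Pi_0}W)$ and $\pi(\Pi_I\otimes_{\Pi_{I,0}}W)$ in $\Coh{\P^1_I}$, so that postcomposing a given framing with this identification sets up a bijection between the two kinds of framing data; in particular, giving $\phi_{\shf F}$ is equivalent to giving an isomorphism $c^*\shf F\isoto\pi(\Pi_I\otimes_{\Pi_{I,0}}W)$. I expect no serious obstacle here: the entire content is the bookkeeping of module structures over the semisimple degree-zero subalgebras, and the only point that genuinely requires the hypothesis is ensuring that $W$ is an honest $\Pi_{I,0}$-module so that the right-hand tensor product is even defined.
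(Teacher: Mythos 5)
Your proof is correct and follows essentially the same route as the paper: both arguments decompose the tensor product $e_I\Pi\otimes_{\Pi_0}W$ along the orthogonal vertex idempotents of the semisimple degree-zero subalgebra and observe that the hypothesis $W_i=0$ for $i\notin I$ kills exactly the summands indexed outside $I$ (the paper groups these into a single block $e_I\Pi e_{I^c}\otimes_{\Pi_{I^c,0}}(\bigoplus_{i\in I^c}W_i)$ rather than splitting vertex by vertex). Your additional remarks on why $W$ is a unital $\Pi_{I,0}$-module and on compatibility with the left $\Pi_I$-action and the functor $\pi$ are harmless elaborations of the same computation.
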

	\begin{proof}
		Let $I^c \coloneqq Q_0\setminus I$.
		Then this is an easy computation: We have \[e_I\Pi\otimes_{\Pi_0}W = \left(e_I\Pi e_I\otimes_{{\Pi_{I, 0}}}(\bigoplus _{i\in I}W_i)\right)\oplus \left(e_I\Pi e_{I^c}\otimes_{\Pi_{I^c,0}}(\bigoplus_{i\in I^c}W_i) \right),\] and if $\bigoplus_{i\in I^c}W_i =0$, the second summand vanishes.
	\end{proof}
	
	\begin{proposition}\label{prop:PreservesFraming}
		Fix $W$ as above. The functors $\tau^*, \tau_*$ induce functors (which we write with the same notation) \[\tau_*\colon\Cohfr{\P^2_\Gamma}(W)\to \Cohfr{\P^2_{I}}(W),\quad \tau^*\colon \Cohfr{\P^2_{I}}(W)\to \Cohfr{\P^2_\Gamma}(W),\] and we have $\tau_*\tau^* = \id$.
		
	\end{proposition}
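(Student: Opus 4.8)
The plan is to reduce everything to the line at infinity and to track how the framing data transforms under $\tau_*$ and $\tau^*$. First I would record two compatibilities between $c^*$ and the pair $\tau_*,\tau^*$. Write $\sigma_*\colon\Coh{\P^1_\Gamma}\to\Coh{\P^1_I}$ for the cornering functor induced by $\bar N\mapsto e_I\bar N$, and $\sigma^*\colon\Coh{\P^1_I}\to\Coh{\P^1_\Gamma}$ for the functor induced by $\bar M\mapsto\Pi e_I\otimes_{\Pi_I}\bar M$ (the exact $\P^1$-analogues of $\tau_*,\tau^*$, which preserve finite dimensionality and so descend to $\cat{qgr}$). I claim there are natural isomorphisms $c^*\circ\tau_*\cong\sigma_*\circ c^*$ and $c^*\circ\tau^*\cong\sigma^*\circ c^*$. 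Both are checked on modules, using that $z$ is central in $\Pibullet$ and acts as $z_I$ on $\Pibullet_I$-modules: one finds $c^*\tau_* N=e_IN/z_IN=e_I(N/zN)=\sigma_*c^*N$, while right-exactness of $-\otimes_{\Pibullet_I}M$ applied to the surjection $\Pibullet e_I\twoheadrightarrow\Pi e_I$ gives $c^*\tau^*M=(\Pibullet e_I\otimes_{\Pibullet_I}M)/z(-)\cong\Pi e_I\otimes_{\Pi_I}(M/z_IM)=\sigma^*c^*M$.

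With these in hand the $\tau_*$ direction is immediate. Writing $T\coloneqq\pi(\Pi\otimes_{\Pi_0}W)$ and $T_I\coloneqq\pi(e_I\Pi\otimes_{\Pi_0}W)$ for the two framing targets, I observe $\sigma_*T=T_I$ on the nose. Hence, given a $W$-framing $\phi_{\shf E}\colon c^*\shf E\isoto T$ on $\P^2_\Gamma$, applying $\sigma_*$ and the first compatibility yields $c^*\tau_*\shf E\cong\sigma_*c^*\shf E\isoto\sigma_*T=T_I$, the required framing of $\tau_*\shf E$; functoriality is clear.

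The $\tau^*$ direction is where the real work lies, and I expect the identification of framing targets to be the main obstacle. By the second compatibility a framing $\phi_{\shf F}\colon c^*\shf F\isoto T_I$ produces $c^*\tau^*\shf F\cong\sigma^*c^*\shf F\isoto\sigma^*T_I=\pi(\Pi e_I\otimes_{\Pi_I}e_I\Pi\otimes_{\Pi_0}W)$, so it remains to build a canonical isomorphism $\sigma^*T_I\cong T$ in $\Coh{\P^1_\Gamma}$. The plan is to use the multiplication map $\mu\colon\Pi e_I\otimes_{\Pi_I}e_I\Pi\to\Pi$, $p\otimes q\mapsto pq$, and to show it is an isomorphism in $\cat{qgr}(\Pi)$; since $\Pi_0$ is semisimple, $-\otimes_{\Pi_0}W$ is exact and preserves finite dimensionality, so tensoring then gives $\sigma^*T_I\cong T$. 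The cokernel of $\mu$ is $\Pi/\Pi e_I\Pi$, which is finite-dimensional by the argument in the proof of \cref{prop:LongPathGoesEverywhere}. For the kernel, applying the exact functor $e_I(-)$ shows $e_I\mu=\mathrm{id}_{e_I\Pi}$, whence $e_I\ker\mu=0$; a short computation then gives $\Pi e_I\Pi\cdot\ker\mu=0$, so $\ker\mu$ is a module over the finite-dimensional algebra $\Pi/\Pi e_I\Pi$. Finally $e_I\Pi$ is finitely generated as a left $\Pi_I$-module (using \cref{prop:LongPathGoesEverywhere}, split a $\Pi_I$-valued initial segment off any sufficiently long path and induct on length), so the source of $\mu$ is finitely generated over the noetherian algebra $\Pi$; hence $\ker\mu$ is finitely generated, and therefore finite-dimensional. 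Thus $\mu$ is a $\cat{qgr}$-isomorphism, which furnishes the framing of $\tau^*\shf F$.

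For the last claim, $\tau_*\tau^*=\id$ on framed sheaves, I would combine $\tau_*\tau^*=\id$ on $\Coh{\P^2_I}$ (\cref{lem:tauFunctorProps}) with $\sigma_*\sigma^*=\id$ on $\Coh{\P^1_I}$. Applying the $\tau_*$-construction to the framing of $\tau^*\shf F$ built above amounts, on the $\P^1$ level, to applying $\sigma_*$ to $\phi_{\tau^*\shf F}$; by naturality this recovers $\phi_{\shf F}$ up to the canonical identifications $\sigma_*\sigma^*T_I=T_I$ and $\sigma_*T=T_I$. The only bookkeeping left is that these identifications are compatible with $\sigma_*(\mu)$, and this holds because $\sigma_*(\mu)=e_I\mu=\mathrm{id}_{e_I\Pi}$. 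This completes the plan.
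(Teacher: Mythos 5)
Your proposal is correct and follows essentially the same route as the paper: both directions are reduced to the line at infinity via the exact sequence $0\to zM\to M\to M/zM\to 0$ together with the exactness of $j^*$ and right-exactness of $j_!$, and the identification of the framing target for $\tau^*$ rests, exactly as in the paper, on the finite-dimensionality of $\Pi/\Pi e_I\Pi$ coming from \cref{prop:LongPathGoesEverywhere}. If anything you are more careful than the paper at one point --- the paper asserts $\Pibullet e_I\otimes_{\Pibullet_I}e_I\Pi=\Pi e_I\Pi$ outright, whereas you also verify that the kernel of the multiplication map $\Pi e_I\otimes_{\Pi_I}e_I\Pi\to\Pi$ is finite-dimensional; that extra argument is sound, though your inductive justification that $e_I\Pi$ is finitely generated as a left $\Pi_I$-module is only sketched and would need a little care about where a long path can be split at a vertex of $I$.
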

	\begin{proof}
		Let $\shf F$ be a $W$-framed sheaf on $\P^2_\Gamma$. 
		We have a short exact sequence
		\[ 0\to z\shf F\to \shf F \to \shf F/z\shf F\isoto \OO_{\P^1_\Gamma}\otimes W\to 0, \]
		and because $\tau_*$ is an exact functor, this gives \[ e_I\OO_{\P^1_\Gamma}\otimes W\isoto\tau_*(\shf F/z\shf F)\isoto \tau_*(\shf F)/\tau_*(z\shf F) = \tau_*(\shf F)/z_I\tau_*\shf F, \] as desired.
		
		In the other direction, let $\shf E$ be a $W$-framed sheaf on $\P^2_{I}$. 
		There is then a short exact sequence
		\[ 0\to z_I\shf E\to \shf E\to \shf E/z_I\shf E\isoto e_I\OO_{\P^1_{\Gamma}}\otimes W\to 0. \]
		
		Applying $\tau^*$, we obtain:
		\[  \tau^*(z_I\shf E)\overset{a}{\to} \tau^*\shf E\to \tau^*(\shf E/z_I\shf E)\isoto \tau^*(e_I\OO_{\P^1_{\Gamma}}\otimes W)\to 0. \]
		We first show that $a$ is injective: If we let $N$ be a $\Pibullet_I$-module representing $\shf E$, we have: \[ \tau^*(z_I\shf E) = \pi(j_!(z_IN)).\]  But \[j_!(z_IN) =\Pi^\bullet e_I\otimes_{\Pi^\bullet_I}z_IN = z\Pi^\bullet e_I\otimes_{\Pi^\bullet_I}N = zj_!(N) ,\] which is clearly a submodule of $j_!(N) = \Pibullet e_I\otimes_{\Pibullet_I}N$, so $a$ is injective.

		Consider then $\tau^*(e_I\OO_{\P^1_{\Gamma}}\otimes W) = \pi(\Pi^{\bullet}e_I\otimes_{\Pi^\bullet_I}e_I\Pi\otimes W) $. 
		We note that $\Pi^{\bullet}e_I\otimes_{\Pi^\bullet_I}e_I\Pi = \Pi e_I\Pi$. This is a subalgebra of $\Pi$. Consider the length filtration on $\Pi$: By \cref{prop:LongPathGoesEverywhere}, there is an integer $p$ such that any class of length $>p$ can be written as a sum of paths each passing through a vertex in $I$. But the space of elements of length $\le p$ is finite-dimensional. Thus $\Pi/\Pi e_I\Pi$ is finite-dimensional, and $\Pi^{\bullet}e_I\Pi\otimes_{\Pi_0} W$ and $\Pi\otimes_{\Pi_0} W$ represent the same element of $\Coh{\P^1_\Gamma}$. This concludes the proof.
	\end{proof}
	\subsection{Torsion-free inverse images}
	\label{subsec:tfinvimg}
	As mentioned, the functor $\tau^*$ may not preserve torsion-freeness. Let $\shf K = \pi(\bigoplus_{k\ge 0} K_k),\quad \shf L=\pi(\bigoplus_{k\ge 0} L_k)$ be subsheaves of $\shf G = \pi(\bigoplus_{k\ge 0} G_k) \in \Coh{\P^2_\Gamma}$, and assume that $c^*\shf K = c^*\shf L = 0$. On the level of modules, this means that there is a $k'\ge 0$ such that for all $k\ge k'$ we have
	\begin{enumerate}
		\item $K_k\subset G_k,\quad L_k\subset G_k$, and
		\item $zK_k = K_{k+1},\quad zL_k = L_{k+1}$.
	\end{enumerate}
	
	We can then set $\shf K +\shf L = \pi(\bigoplus_{k\ge k'} K_k+L_k)$, where the sums are taken as submodules of $G_k$. It is then clear that $z(K_k+L_k) = K_{k+1} + L_{k+1}$ for all $k\ge k'$, and so $c^*(\shf K +\shf L) = 0 $. It follows that there is a unique maximal subsheaf $\shf T\subset \shf G$ such that $c^*\shf T =0$.
	
	\begin{definition}\label{def:TorFreeInvIm}
		Let $\shf F\in \Cohfr{\P^2_I}(W)$, and assume that $\shf F$ is torsion-free. Let $\shf T_{\shf F}$ be the maximal subsheaf of $\tau^*\shf F$ such that $c^*\shf T_{\shf F}=0$, and set \[\tau^T(\shf F)\coloneqq\tau^*\shf F/\shf T_{\shf F}.\] We call $\tau^T\shf F$ the \emph{torsion-free inverse image} of $\shf F$.
	\end{definition}
	The reason for the name is provided by the following lemma:
	
	\begin{lemma}\label{lem:EquivalentTorFreeInvIm}
		Let $\shf F\in \Cohfr{\P^2_I}(W)$. Then 
		$\tau^T\shf F$ is isomorphic to the image of $\tau^*\shf F$ in $(\tau^*\shf F)\dual\dual$, and $\tau^T\shf F$ is torsion-free.
	\end{lemma}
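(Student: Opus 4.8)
The plan is to identify $\tau^T\shf F$ with the image of the canonical evaluation morphism $\eta\colon\tau^*\shf F\to(\tau^*\shf F)\dual\dual$. First I would observe that $(\tau^*\shf F)\dual\dual$ is locally free: the internal dual of \emph{any} sheaf is locally free by \cite[Proposition 2.0.4]{BGK} (applicable here since $\P^2_\Gamma$ is strongly regular, \cref{lem:UncorneredIsStrongReg}), so applying $(-)\dual$ twice lands in the locally free sheaves. Hence $\im\eta$, being a subsheaf of a locally free sheaf, is torsion-free by \cref{def:sheafTypes}; this will settle the second assertion for free. Because $\tau^T\shf F=\tau^*\shf F/\shf T_{\shf F}$ by \cref{def:TorFreeInvIm} and $\im\eta=\tau^*\shf F/\ker\eta$, the entire lemma reduces to the single identity $\ker\eta=\shf T_{\shf F}$ of subsheaves of $\tau^*\shf F$.

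For the inclusion $\shf T_{\shf F}\subseteq\ker\eta$ I would show $\shf T_{\shf F}\dual=0$; since the evaluation maps are natural, $\shf T_{\shf F}\dual\dual=0$ forces the composite $\shf T_{\shf F}\hookrightarrow\tau^*\shf F\xrightarrow{\eta}(\tau^*\shf F)\dual\dual$ to vanish, i.e. $\shf T_{\shf F}\subseteq\ker\eta$. To see $\shf T_{\shf F}\dual=0$, take any $\psi\colon\shf T_{\shf F}\to\OO_{\P^2_\Gamma}(k)$; its image is simultaneously a quotient of $\shf T_{\shf F}$ (so $c^*\im\psi=0$, as $c^*$ is right exact and $c^*\shf T_{\shf F}=0$) and a subsheaf of the locally free $\OO_{\P^2_\Gamma}(k)$. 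The key sub-lemma is then that a nonzero subsheaf $\shf I$ of a locally free sheaf cannot satisfy $c^*\shf I=0$: since $z$ is a central nonzerodivisor it is injective on $\shf I$, while $c^*\shf I=0$ makes $z\colon\shf I\to\shf I(1)$ surjective and hence bijective in large degrees, so $\dim\shf I_m$ is eventually constant; but a nonzero subsheaf of a locally free sheaf is torsion-free of positive rank, whose Hilbert function grows quadratically — a contradiction. Thus $\im\psi=0$ for every $\psi$, giving $\shf T_{\shf F}\dual=0$.

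For the reverse inclusion I would prove $c^*\ker\eta=0$ and invoke the maximality of $\shf T_{\shf F}$. Applying $c^*$ to $0\to\ker\eta\to\tau^*\shf F\to\im\eta\to0$, and using that the first left-derived functor of $c^*$ sends a sheaf $\shf A$ to $\ker(z\colon\shf A\to\shf A(1))$, the torsion-freeness of $\im\eta$ (hence injectivity of $z$ on it) kills that derived term and yields an embedding $c^*\ker\eta\hookrightarrow c^*\tau^*\shf F$. Here the framing hypothesis enters: by \cref{prop:PreservesFraming} the sheaf $c^*\tau^*\shf F$ is locally free, whereas $\ker\eta$ is a rank-zero (torsion) sheaf, so $c^*\ker\eta$ is torsion on $\P^1_\Gamma$ and must vanish inside a locally free sheaf. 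This gives $c^*\ker\eta=0$, hence $\ker\eta\subseteq\shf T_{\shf F}$, completing the identity $\ker\eta=\shf T_{\shf F}$ and the proof.

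I expect the main obstacle to be the sub-lemma of the second paragraph together with the rank bookkeeping of the third: both rest on the noncommutative analogue of the elementary facts that torsion-free sheaves have quadratic Hilbert growth while $c^*$-acyclic subsheaves have bounded growth, and that torsion sheaves restrict to torsion along the line at infinity. Making these growth and rank statements precise in $\cat{qgr}(\Pibullet)$ — where ampleness (\cref{prop:Ampleness}) and the vanishing $\Ext^{>2}=0$ are the only structural tools available — is the delicate point of the argument.
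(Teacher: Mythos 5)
Your first two paragraphs are essentially correct and take a genuinely different route to the inclusion $\shf T_{\shf F}\subseteq\ker\eta$ than the paper does: the paper gets $\Hom_{\P^2_\Gamma}(\shf T_{\shf F},(\tau^*\shf F)\dual\dual)=0$ from Serre duality together with the Artin-ness of $\shf T_{\shf F}$ and a vanishing result for $\Ext^2$, whereas you deduce it from $\shf T_{\shf F}\dual=0$ and naturality of the evaluation map. That works, and your ``key sub-lemma'' (a nonzero torsion-free sheaf cannot have $c^*=0$) is exactly \cref{lem:TorFreeSheafRank0IsNil} for $I=Q_0$, available as a black box from \cite[Propositions 3.3.9(10) and 2.0.9(4)]{BGK}, so the Hilbert-function bookkeeping you worry about can be outsourced.

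The genuine gap is in your third paragraph, at the step ``$\ker\eta$ is a rank-zero (torsion) sheaf, so $c^*\ker\eta$ is torsion on $\P^1_\Gamma$.'' That implication is false: a rank-zero sheaf on $\P^2$ whose support contains the line $L$ at infinity restricts to a sheaf of \emph{positive} rank on $L$ (for instance $c^*\OO_L=\OO_L$). Concretely, if $\tau^*\shf F$ had the shape $\shf G\oplus\OO_L$ with $\shf G$ torsion-free and $\shf G|_L$ free, then $c^*\tau^*\shf F$ would still be locally free, $\ker\eta\cong\OO_L$ would be rank zero, and $c^*\ker\eta\cong\OO_L$ would embed into the locally free $c^*\tau^*\shf F$ with no contradiction; your argument cannot rule this configuration out. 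What must be excluded is precisely that the torsion subsheaf of $\tau^*\shf F$ has a component supported along $\P^1_\Gamma$, and the local freeness of the restriction $c^*\tau^*\shf F$ alone does not do this. The paper closes the gap by passing to the $\Gamma$-equivariant commutative picture and arguing, as in \cite[Lemma 4.4]{Paper1}, that the framing forces $\tau^*\shf F$ to be locally free on an open set containing $\P^1_\Gamma$, so that $\eta$ is an isomorphism near the line and $\ker\eta$ is supported away from it, giving $c^*\ker\eta=0$ directly. Your approach could be salvaged by first proving that $\tau^*\shf F$ is $z$-torsion-free (using the torsion-freeness of $\shf F$ on $\P^2_I$ and the description of $j_!$ as in the proof of \cref{prop:PreservesFraming}), since a $z$-torsion-free sheaf has no nonzero subsheaf supported inside $\set{z=0}$ and then $c^*\ker\eta$ really is torsion on $\P^1_\Gamma$; but some such input beyond the framing of the restriction is indispensable.
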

	
	\begin{proof}
		Let $\shf K$ be the kernel of the morphism $\tau^*\shf F\to (\tau^*\shf F)\dual\dual$; we show that $\shf T_{\tau^*\shf F}= \shf K$. First, consider the exact sequence
		\begin{equation}\label{eq:kernelTorsion}
			0\to \shf K\to\tau^*\shf F\to (\tau^*\shf F)\dual\dual,\end{equation} and use the equivalence of categories \cref{lem:EquivCategories} to interpret this as a ($\Gamma$-equivariant) exact sequence of $\Gamma$-equivariant coherent sheaves on $\P^2$. There is then an open subscheme $U\subset \P^2$ on which $\tau^*\shf F$ is locally free, and by arguing as in \cite[Lemma 4.4]{Paper1}, $U$ must contain $\P^1_\Gamma$. Then restrict \eqref{eq:kernelTorsion} to $U$. Because $(\tau^*\shf F)|_U$ is locally free, it follows that $\tau^*\shf F|_U \isoto (\tau^*\shf F\dual\dual)|_U$, and so $\shf K|_U = 0$, so $c^*\shf K = 0$. Thus $\shf K\subset\shf T_{\shf F}$. 
		
		For the other inclusion, note that by \cite[Proposition 2.0.4(5)]{BGK}, $(\tau^*\shf F)\dual\dual$ is locally free, and by \cite[Proposition 3.3.9(10)]{BGK} $\shf T_{\tau^*\shf F}$ is \emph{Artin}. It follows from \cite[Proposition 2.0.9(1)]{BGK} that $\Ext^2_{\P^2_\Gamma}((\tau^*\shf F)\dual\dual(3), \shf T_{\tau^*\shf F}) = 0$, which by Serre duality gives $\Hom_{\P^2_\Gamma}(\shf T_{\tau^*\shf F}, (\tau^*\shf F)\dual\dual)= 0$. But then $\shf T_{\tau^*\shf F}\subset \shf K$, so we have $\shf T_{\tau^*\shf F}=\shf K$.
		
		Finally, $\tau^T\shf F$ is torsion-free because it is a subsheaf of the locally free sheaf $\tau^*\shf F\dual\dual$.
	\end{proof}

	\begin{lemma}\label{lem:EquivalencesOfTorFreeInvIm}
		Let $\shf F\in \Cohfr{\P^2_I}(W)$ be torsion-free. Then $\tau^T\shf F\in \Cohfr{\P^2_\Gamma}(W)$, and $H^1(\P^2_\Gamma, \tau^T\shf F(k)) = H^1(\P^2_\Gamma, \tau^*\shf F(k))$ for all $k$.
	\end{lemma}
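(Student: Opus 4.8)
The plan is to read both assertions off the defining short exact sequence on $\P^2_\Gamma$,
\[0\to \shf T_{\shf F}\to \tau^*\shf F\to \tau^T\shf F\to 0,\]
from \cref{def:TorFreeInvIm}, exploiting the two facts already supplied in the proof of \cref{lem:EquivalentTorFreeInvIm}: that $c^*\shf T_{\shf F}=0$ by construction of $\shf T_{\shf F}$, and that $\shf T_{\shf F}$ is \emph{Artin}. The framing statement will follow from the right exactness of $c^*$, and the cohomological statement from the long exact sequence in cohomology.

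For the first assertion I would first observe that $c^*$ is induced on modules by $M\mapsto M/zM=\Pi\otimes_{\Pibullet}M$, hence is right exact on $\Coh{\P^2_\Gamma}$. Applying $c^*$ to the sequence above and using $c^*\shf T_{\shf F}=0$, the map $c^*(\tau^*\shf F)\to c^*(\tau^T\shf F)$ induced by the quotient $q\colon\tau^*\shf F\to\tau^T\shf F$ is surjective by right exactness and injective because its kernel is the image of $c^*\shf T_{\shf F}=0$; hence it is an isomorphism. Since $\shf F\in\Cohfr{\P^2_I}(W)$, \cref{prop:PreservesFraming} equips $\tau^*\shf F$ with a $W$-framing $\phi_{\tau^*\shf F}\colon c^*(\tau^*\shf F)\isoto \OO_{\P^2_\Gamma}e_{Q_0}\otimes W$. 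Setting $\phi_{\tau^T\shf F}\coloneqq \phi_{\tau^*\shf F}\circ(c^*q)^{-1}$ then furnishes a $W$-framing of $\tau^T\shf F$, so $\tau^T\shf F\in\Cohfr{\P^2_\Gamma}(W)$.

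For the second assertion, twist the sequence by $k$ and take the long exact cohomology sequence on $\P^2_\Gamma$; the relevant segment is
\[H^1(\P^2_\Gamma,\shf T_{\shf F}(k))\to H^1(\P^2_\Gamma,\tau^*\shf F(k))\to H^1(\P^2_\Gamma,\tau^T\shf F(k))\to H^2(\P^2_\Gamma,\shf T_{\shf F}(k)).\]
Because $\shf T_{\shf F}$ is Artin, \cref{def:sheafTypes} gives $H^{>0}(\P^2_\Gamma,\shf T_{\shf F}(k))=0$ for every $k$, so the two outer terms vanish and the middle arrow is an isomorphism, which is precisely the claimed equality.

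The genuinely hard inputs — that $\shf T_{\shf F}=\ker\!\big(\tau^*\shf F\to(\tau^*\shf F)\dual\dual\big)$ and that it is Artin — were already established in \cref{lem:EquivalentTorFreeInvIm}, so the present lemma is essentially bookkeeping. The only point I expect to demand a little care is that all of this must be read in the quotient category $\cat{qgr}$ rather than on modules: one should confirm that $c^*\shf T_{\shf F}=0$ is being used as a vanishing in $\Coh{\P^1_\Gamma}$ (equivalently, that $T/zT$ is finite-dimensional for a representing module $T$), so that the induced isomorphism $c^*(\tau^*\shf F)\cong c^*(\tau^T\shf F)$ is valid at the level of sheaves. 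I expect this to be the only subtlety.
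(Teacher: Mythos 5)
Your proof is correct, and its second half --- the long exact sequence in cohomology combined with the vanishing of $H^{1}$ and $H^{2}$ of the Artin sheaf $\shf T_{\shf F}(k)$ --- is exactly the paper's argument. The only divergence is in how the framing isomorphism $c^*(\tau^*\shf F)\isoto c^*(\tau^T\shf F)$ is produced: the paper reads it off the isomorphism $c^*\tau^*\shf F\isoto c^*\bigl((\tau^*\shf F)\dual\dual\bigr)$ established in the proof of \cref{lem:EquivalentTorFreeInvIm} (via local freeness of $\tau^*\shf F$ near the line at infinity), whereas you obtain it by applying the right exact functor $c^*$ to the defining short exact sequence $0\to\shf T_{\shf F}\to\tau^*\shf F\to\tau^T\shf F\to 0$ and using $c^*\shf T_{\shf F}=0$, with injectivity coming from the identification of the kernel with the image of $c^*\shf T_{\shf F}$. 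Your route is marginally more self-contained, since it needs only the definition of $\shf T_{\shf F}$ and right exactness of $c^*$ rather than the double-dual characterisation of $\tau^T\shf F$; both ultimately rest on the same inputs from \cref{lem:EquivalentTorFreeInvIm}, so the difference is cosmetic. Your closing caveat about reading the vanishing $c^*\shf T_{\shf F}=0$ in $\cat{qgr}$ rather than on representing modules is well placed and is handled correctly.
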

	\begin{proof}
		For the first, we know from the proof of the previous Lemma that there is an isomorphism $c^*\tau^*\shf F \isoto c^*(\tau^*\shf F\dual\dual)$, and so we must also have $c^*\tau^T\shf F \cong c^*\tau^*\shf F\cong \OO_{\P^2/\Gamma}\otimes_{\Pi_0}W$. 
		
		As for the claim about cohomology, choose any $k$ and consider the short exact sequence
		\[0\to\shf T_{\shf F}(k)\to\tau^*\shf F(k)\to\tau^T\shf F(k)\to 0, \] giving the long exact sequence 
		\[H^1(\P^2_\Gamma, \shf T_{\shf F}(k))\to H^1(\P^2_\Gamma, \tau^*\shf F(k))\to H^1(\P^2_\Gamma, \tau^T\shf F(k))\to H^2(\P^2_\Gamma, \shf T_{\shf F}(k)).\] By \cite[Proposition 3.3.9(10), Definition 2.0.8]{BGK}, $H^1(\P^2_\Gamma, \shf T_{\shf F}(k))= H^2(\P^2_\Gamma, \shf T_{\shf F}(k))=0$, giving the desired result.
	\end{proof}

	\begin{lemma}\label{lem:TorFreeEquality}
		Let $\shf F\in \Cohfr{\P^2_I}(W)$ be torsion-free. Then $\tau_*\tau^T\shf F = \tau_*\tau^*\shf F = \shf F$.
	\end{lemma}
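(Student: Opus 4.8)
The plan is to leverage the two facts already established: first, that $\tau_*\tau^* = \id$ (\cref{lem:tauFunctorProps}), so the harder of the two desired equalities, $\tau_*\tau^*\shf F = \shf F$, is immediate. It remains to show $\tau_*\tau^T\shf F = \tau_*\tau^*\shf F$. By \cref{def:TorFreeInvIm}, we have a short exact sequence
\[
0\to \shf T_{\shf F}\to \tau^*\shf F\to \tau^T\shf F\to 0
\]
in $\Coh{\P^2_\Gamma}$, where $\shf T_{\shf F}$ is the maximal subsheaf of $\tau^*\shf F$ with $c^*\shf T_{\shf F}=0$. Since $\tau_*$ is \emph{exact} (again \cref{lem:tauFunctorProps}), applying it yields an exact sequence
\[
0\to \tau_*\shf T_{\shf F}\to \tau_*\tau^*\shf F\to \tau_*\tau^T\shf F\to 0,
\]
so the claim reduces entirely to showing that $\tau_*\shf T_{\shf F} = 0$.

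The key step is therefore to verify that $\tau_*$ annihilates $\shf T_{\shf F}$. On the module level, $\tau_*$ is induced by $j^*(N) = e_I\Pibullet\otimes_{\Pibullet} N$, which as a vector space is just $e_I N$. I would argue as follows: the condition $c^*\shf T_{\shf F} = 0$ means, by the discussion preceding \cref{def:TorFreeInvIm}, that if $T = \bigoplus_{k} T_k$ is a module representing $\shf T_{\shf F}$, then for all large $k$ we have $z T_k = T_{k+1}$, i.e. multiplication by $z$ is eventually surjective on $\shf T_{\shf F}$. The crucial observation is that $\shf T_{\shf F}$ is \emph{Artin} (shown in the proof of \cref{lem:EquivalentTorFreeInvIm} via \cite[Proposition 3.3.9(10)]{BGK}), and more concretely that a sheaf with $c^*\shf T_{\shf F}=0$ is supported entirely away from the line at infinity $\P^1_\Gamma$. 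I would then show that $e_I T$ represents the zero object of $\cat{qgr}(\Pibullet_I)$, i.e. that $j^*T$ is finite-dimensional (equivalently torsion). The cleanest route is via \cref{prop:LongPathGoesEverywhere}: any path in $\Pibullet$ of length greater than some fixed $n$ passes through a vertex of $I$, so the degree-$k$ part $e_I(\Pibullet)_k$ controls, up to finite error, all of $(\Pibullet)_k$; combined with the eventual $z$-surjectivity and Artin-ness of $\shf T_{\shf F}$, this should force $e_I T_k$ to be eventually zero, hence $\tau_*\shf T_{\shf F}=\pi(j^*T)=0$.

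The main obstacle I anticipate is making the last reduction rigorous: showing $j^*T = e_I T$ is finite-dimensional (torsion) given only that $\shf T_{\shf F}$ is Artin and $z$-eventually-surjective. The subtlety is that $c^*\shf T_{\shf F}=0$ is a statement on $\P^2_\Gamma$, whereas we need a vanishing after cornering to $\Pibullet_I$, and one must check that the \emph{eventual} surjectivity of $z$ together with the path-length bound genuinely kills the cornered module rather than merely bounding it. An alternative, possibly more robust, approach would bypass the module computation: since $\shf T_{\shf F}$ is Artin, $H^{>0}(\P^2_\Gamma, \shf T_{\shf F}(k))=0$ for all $k$, and one could instead use \cref{lem:CohomUpAndDown} together with the compatibility of $\tau_*$ with cohomology to show every cohomology group of $\tau_*\shf T_{\shf F}$ vanishes in all twists, and then invoke \cref{prop:Ampleness} (ampleness of $\{\OO_{\P^2_I}(i)\}$) to conclude $\tau_*\shf T_{\shf F}=0$ directly. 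I would pursue whichever of these two arguments turns out to require the fewest new lemmas, and expect the Artin/ampleness route to be the safer one.
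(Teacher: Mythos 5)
Your reduction is correct and matches the paper: $\tau_*\tau^*=\id$ gives the first equality, and applying the exact $\tau_*$ to $0\to\shf T_{\shf F}\to\tau^*\shf F\to\tau^T\shf F\to 0$ reduces everything to $\tau_*\shf T_{\shf F}=0$. But both of your proposed ways of proving that vanishing have a genuine gap: neither uses the torsion-freeness of $\shf F$, and without it the intermediate claims are simply false. The statement ``$\shf T$ Artin with $c^*\shf T=0$ implies $\tau_*\shf T=0$'' fails already for a skyscraper sheaf supported on a $\Gamma$-orbit in $\C^2\subset\P^2$: it is Artin, vanishes at infinity, yet $e_I T_k\cong \bigoplus_{i\in I}\Hom_\Gamma(\rho_i,\C\Gamma)^{\oplus\ell}\ne 0$ in every degree, so $\tau_*$ of it is a nonzero skyscraper on $\P^2_I$. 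This kills route (a): \cref{prop:LongPathGoesEverywhere} together with eventual $z$-surjectivity bounds $e_IT_k$ but cannot force it to vanish. Route (b) fails for the same example: Artin-ness gives only $H^{>0}(\shf T_{\shf F}(k))=0$, and ampleness would require $H^0(\shf T_{\shf F}(k))=0$ for all $k$ to conclude the sheaf is zero --- but $H^0$ of a nonzero Artin sheaf is nonzero (indeed the paper later writes an Artin sheaf as $\bigoplus_k H^0(\shf K(k))$).

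The missing ingredient is that $\tau_*\shf T_{\shf F}$ sits inside $\tau_*\tau^*\shf F=\shf F$, hence is \emph{torsion-free}. The paper's proof exploits exactly this: if $\tau_*\shf T_{\shf F}\ne 0$ it embeds in a locally free $\shf G$, so $\Hom(\tau_*\shf T_{\shf F},\shf G)\ne 0$; by full faithfulness of $\tau^*$ (\cref{lem:tauFunctorProps}) this equals $\Hom(\tau^*\tau_*\shf T_{\shf F},\tau^*\shf G)$, where $\tau^*\tau_*\shf T_{\shf F}$ is Artin (its framing is zero by \cref{prop:PreservesFraming}) and $\tau^*\shf G$ is locally free by \cref{lem:torFreeLocFreeUp}; Serre duality on $\P^2_\Gamma$ plus \cite[Proposition 2.0.9(1)]{BGK} then forces this $\Hom$ to vanish, a contradiction. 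So the correct shape of the argument is ``torsion-free $+$ supported away from infinity $\Rightarrow$ zero,'' not ``Artin $+$ supported away from infinity $\Rightarrow$ killed by $\tau_*$.'' You should rework the key step along these lines.
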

	\begin{proof}
		
		By definition, there is a torsion-free sheaf $\shf G\in \Coh{\P^2_\Gamma}$ such that $\tau_*\shf G = \shf F$. Consider the induced morphism $	\phi\colon \tau^*\shf F =\tau^*\tau_*\shf G \to \shf G $: because $\shf G$ is torsion-free, $\phi$ must factor through $\tau^T\shf F$. But we have $\tau_*\tau^*\shf F = \tau_*\shf G = \shf F$, so it must hold that $\tau_*\tau^T(\shf F) = \shf F$ as well.
		
	\end{proof}
	
	\begin{lemma}\label{lem:zTorFree}
		Any torsion-free sheaf $\shf F\in \Coh{\P^2_{I}}$ is $z$-torsion-free.
	\end{lemma}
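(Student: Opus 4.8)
The plan is to reduce the statement to the locally free case and then transport the question to $\P^2_\Gamma$ via $\tau^*$, where $z$-torsion-freeness of torsion-free sheaves is already available. Throughout, the morphism $(z_I\cdot)$ is \emph{natural}: since $z_I = e_I z e_I$ is central in $\Pibullet_I$ (as $z$ is central in $\Pibullet$), it commutes with every module homomorphism, so $(z_I\cdot)$ is a natural transformation $\id \Rightarrow (1)$ on $\Coh{\P^2_I}$.

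First I would perform the reduction. Let $\shf F$ be torsion-free and choose, by definition, a monomorphism $\shf F \injto \shf E$ into a locally free sheaf $\shf E$. Naturality of $(z_I\cdot)$ makes the square formed by the two inclusions $\shf F\injto\shf E$, $\shf F(1)\injto\shf E(1)$ and the two multiplication maps commute, so the composite $\shf F \xrightarrow{z_I} \shf F(1)\injto \shf E(1)$ coincides with $\shf F\injto\shf E\xrightarrow{z_I}\shf E(1)$. Consequently $\ker\!\big((z_I\cdot)\colon\shf F\to\shf F(1)\big)$ is contained in $\ker(\shf F\injto\shf E)=0$, \emph{provided} $(z_I\cdot)\colon\shf E\to\shf E(1)$ is a monomorphism. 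Hence it suffices to treat locally free $\shf E$.

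The key computation is that $\tau^*$ intertwines $(z_I\cdot)$ with $(z\cdot)$. Let $M$ represent $\shf E$, so that $\tau^*\shf E=\pi(j_!M)$ with $j_!M=\Pibullet e_I\otimes_{\Pibullet_I}M$. For $p\in\Pibullet e_I$ we have, using centrality of $z$ together with $e_I z\,e_{Q_0\setminus I}=0$ (the loops contributing to $z$ sit at single vertices, so $e_j z e_k=\delta_{jk}z_j$ has no off-diagonal components), the identity $zp=pz=p e_I z e_I=p z_I$. Therefore $z\,(p\otimes m)=zp\otimes m=p\otimes z_I m$, that is $j_!(z_I\cdot)=(z\cdot)$ on $j_!M$, and applying $\pi$ gives $\tau^*(z_I\cdot)=(z\cdot)\colon\tau^*\shf E\to(\tau^*\shf E)(1)$. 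By \cref{lem:torFreeLocFreeUp}, $\tau^*\shf E$ is locally free on $\P^2_\Gamma$, hence torsion-free, hence $z$-torsion-free: this is exactly the case $I=Q_0$, which holds by \cite[Lemma 3.3.13]{BGK} (equivalently, via the equivalence $\Coh{\P^2_\Gamma}\isoto\ECoh{\Gamma}{\P^2}$ of \cref{lem:EquivCategories} and the classical fact that multiplication by the section cutting out the line at infinity is injective on a torsion-free sheaf on $\P^2$). Thus $(z\cdot)\colon\tau^*\shf E\to(\tau^*\shf E)(1)$ is a monomorphism.

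To conclude I would invoke that $\tau^*$ \emph{reflects} monomorphisms: since $\tau_*$ is exact and $\tau_*\tau^*=\id$ (\cref{lem:tauFunctorProps}), applying the exact functor $\tau_*$ to the mono $\tau^*(z_I\cdot)$ returns $(z_I\cdot)$ itself, which is therefore mono. This proves the claim for locally free $\shf E$, and with the reduction above, for all torsion-free $\shf F$. The main obstacle is the intertwining identity $\tau^*(z_I\cdot)=(z\cdot)$; once the centrality of $z$ and the vanishing of its off-diagonal components are used to establish $zp=pz_I$ on $\Pibullet e_I$, the rest is formal categorical bookkeeping.
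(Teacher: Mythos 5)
Your proof is correct, and it shares the paper's core strategy---lift to $\P^2_\Gamma$, invoke \cite[Lemma 3.3.13]{BGK} there, and descend through the exact functor $\tau_*$ using $\tau_*(\text{lift})=\shf F$---but it differs in which lift is used. The paper applies $\tau^T$ directly to $\shf F$: since $\tau^T\shf F$ is torsion-free on $\P^2_\Gamma$ it is $z$-torsion-free, and $\tau_*$ applied to the monomorphism $(z\cdot)$ on $\tau^T\shf F$ gives $(z_I\cdot)$ on $\tau_*\tau^T\shf F=\shf F$. You instead first reduce to the locally free case via an embedding $\shf F\injto\shf E$ and the naturality of $(z_I\cdot)$, then lift with the plain $\tau^*$, which preserves local freeness by \cref{lem:torFreeLocFreeUp}. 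Your detour through locally free sheaves is what lets you avoid $\tau^T$ altogether, and this buys something real: in the paper, $\tau^T$ and the identity $\tau_*\tau^T\shf F=\shf F$ (\cref{lem:TorFreeEquality}) are only set up for \emph{framed} torsion-free sheaves, whereas \cref{lem:zTorFree} is stated for arbitrary torsion-free sheaves, so your argument is actually better adapted to the stated generality. You also make explicit the intertwining $\tau^*(z_I\cdot)=(z\cdot)$ via the identity $zp=pz_I$ on $\Pibullet e_I$ (using centrality of $z$ and $e_izе_j=0$ for $i\ne j$), a verification the paper's proof leaves implicit when it asserts that $\tau_*$ turns $(z\cdot)$ into $(z_I\cdot)$. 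The only cost of your route is the extra (but easy) reduction step; both arguments are sound.
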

	\begin{proof}
		For $I= Q_0$, this is \cite[Lemma 3.3.13]{BGK}. For an arbitrary $I$ and a torsion-free $\shf F\in \Coh{\P^2_{I}}$, we simply note that $\tau^T\shf F$ is torsion-free, and so $z$-torsion free by the result just mentioned. Applying the exact functor $\tau_*$ to the monomorphism $(z\cdot)\colon \tau^T\shf F\to \tau^T\shf F(1)$ we obtain a monomorphism $(z_I\cdot)\colon \shf F\to \shf F(1)$.
	\end{proof}

	\begin{lemma}\label{lem:TorFreeSheafRank0IsNil}
		Let $\shf F\in \Coh{\P^2_I}$ be torsion-free, and assume that $c^*\shf F = 0$. Then $\shf F = 0$.
	\end{lemma}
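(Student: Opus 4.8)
The plan is to move the statement to the genuine projective plane $\P^2$, where a nonzero torsion-free sheaf must have full support and hence cannot restrict to $0$ on a line, and then to transport the resulting vanishing back to $\P^2_I$ through the functors $\tau^T$ and $\tau_*$.

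First I would note that $\shf F$ is canonically $0$-framed: for $W = 0$ a $W$-framing is an isomorphism $c^*\shf F\isoto \OO_{\P^2_I}e_{Q_0}\otimes 0 = 0$, which exists precisely because $c^*\shf F = 0$. Hence $\shf F\in \Cohfr{\P^2_I}(0)$ is a torsion-free framed sheaf, and its torsion-free inverse image $\tau^T\shf F$ is defined. By \cref{lem:EquivalentTorFreeInvIm}, $\tau^T\shf F$ is torsion-free on $\P^2_\Gamma$, and by \cref{lem:EquivalencesOfTorFreeInvIm} with $W = 0$ we get $\tau^T\shf F\in \Cohfr{\P^2_\Gamma}(0)$, that is $c^*\tau^T\shf F\cong \OO_{\P^2_\Gamma}\otimes_{\Pi_0}0 = 0$. (One can also see $c^*\tau^*\shf F = 0$ directly: writing $\shf F = \pi(N)$, the identity $j_!(N/z_IN) = j_!N/z\,j_!N$ from the proof of \cref{prop:PreservesFraming} together with \cref{lem:CornerFunctorsPreserveFinDim} makes $j_!N/z\,j_!N$ finite-dimensional; as $\tau^T\shf F$ is a quotient of $\tau^*\shf F$ and $c^*$ is right exact, $c^*\tau^T\shf F = 0$.)

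The heart of the argument, and the only step I expect to require real work, is the geometric input on $\P^2_\Gamma$. Via the equivalence $\Coh{\P^2_\Gamma}\isoto\ECoh{\Gamma}{\P^2}$ of \cref{lem:EquivCategories} — under which $c^*$ corresponds to restriction to the line $L = \{z = 0\}$ — I view $\mathcal G\coloneqq\tau^T\shf F$ as a $\Gamma$-equivariant torsion-free coherent sheaf on the integral surface $\P^2$ satisfying $\mathcal G|_L = 0$. Forgetting the equivariant structure, $\mathcal G$ embeds into a vector bundle on $\P^2$; if $\mathcal G\neq 0$ then $\mathcal G_\eta\neq 0$ at the generic point $\eta$, so $\mathcal G_p\neq 0$ for every point $p$, and for $p\in L$ Nakayama's lemma gives $(\mathcal G|_L)_p = \mathcal G_p/\mathcal I_{L,p}\mathcal G_p\neq 0$ because $\mathcal I_{L,p}\subseteq\mathfrak m_p$. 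This contradicts $\mathcal G|_L = 0$, so $\tau^T\shf F = 0$. This is exactly the passage to $\P^2$ already used in the proof of \cref{lem:EquivalentTorFreeInvIm}.

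It then remains to descend: \cref{lem:TorFreeEquality} yields $\shf F = \tau_*\tau^T\shf F = \tau_*0 = 0$, as desired. The single delicate point is the compatibility of the noncommutative notion of torsion-freeness (embedding into a locally free sheaf in the sense of \cref{def:sheafTypes}) with geometric torsion-freeness on $\P^2$ under \cref{lem:EquivCategories}; this is the same identification already relied upon in \cref{lem:EquivalentTorFreeInvIm}, so it introduces no new obstacle.
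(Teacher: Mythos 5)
Your proposal is correct, and its skeleton is the same as the paper's: both reduce the statement to showing $\tau^T\shf F = 0$ on $\P^2_\Gamma$ and then descend via $\tau_*\tau^T\shf F = \shf F$ (\cref{lem:TorFreeEquality}); there is no circularity, since that lemma does not use the present one. Where you differ is in how the vanishing of $\tau^T\shf F$ is obtained. The paper stays entirely in the noncommutative formalism and quotes two results of \cite{BGK}: a sheaf on $\P^2_\Gamma$ killed by $c^*$ is Artin, and a torsion-free Artin sheaf is zero. You instead pass through the equivalence $\Coh{\P^2_\Gamma}\isoto\ECoh{\Gamma}{\P^2}$ of \cref{lem:EquivCategories}, under which $c^*$ becomes restriction to the line $L=\{z=0\}$, and run the classical support-plus-Nakayama argument: a nonzero torsion-free sheaf on the integral surface $\P^2$ has full support, so its restriction to $L$ has nonzero stalks. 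This is more elementary and self-contained (it replaces two black-box citations by a two-line commutative computation), at the cost of having to verify that the noncommutative notions of torsion-freeness and of $c^*$ match the commutative ones under the equivalence — a compatibility the paper already uses implicitly in the proof of \cref{lem:EquivalentTorFreeInvIm}, as you note. Your preliminary device of viewing $\shf F$ as a $0$-framed sheaf (taking $W=0$) so that \cref{def:TorFreeInvIm}, \cref{lem:EquivalentTorFreeInvIm} and \cref{lem:TorFreeEquality} apply verbatim is legitimate, since a $0$-framing is precisely the hypothesis $c^*\shf F=0$; the paper silently makes the same identification. Both routes are sound; yours trades reliance on \cite{BGK} for reliance on the Morita/equivariant dictionary.
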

	\begin{proof}
		By \cite[Proposition 3.3.9(10) and Proposition 2.0.9(4)]{BGK}, $\tau^T(\shf F) = 0$. But then $\shf F = \tau_*\tau^T\shf F = 0$.
	\end{proof}

	\subsection{The bijection to quiver varieties}
	In this section we define sets of (isomorphism classes of) framed sheaves on the spaces $\P^2_I$, and show that they carry a canonical bijection to certain Nakajima quiver varieties. 
	
	\begin{definition}\label{def:SetsOfFramedSheaves}
		Choose a finite-dimensional $\Pibullet_{I,0}$-module $V = \bigoplus_{i\in I}V_i$, and a finite-dimensional $\Pibullet_0$-module $W = \bigoplus_{i\in I} W_i$.
		
		Let $\Cohfr{\P^2_{I}}(V, W)$ be the set of isomorphism classes of $W$-framed sheaves $(\shf F, \phi_{\shf F})$ such that $\shf F\in \Coh{\P^2_{I}}$, $\shf F$ is \emph{torsion-free}, and $H^1(\P^2_{I}, \shf F(-1)) = V$.
	\end{definition}
	
	We now need to introduce a condition on $\Pi_0$-modules $V$ depending on the set $I$.
	\begin{definition}\label{def:sufficiency}
		
		We say that $V$ is \emph{sufficient (with respect to $I$)} if, for every $i\not\in I$, we have $2\dim V_i\ge \sum_{i\textrm{ adjacent to }j} \dim V_j$. Here, $i$ \emph{adjacent to} $j$ means that there is a length-1 path in $Q$ between $i$ and $j$.
		
	\end{definition}
	\begin{lemma}\label{lem:DimensionUpperBound}
		Fix a $\Pi_0$-module $W$, and assume that $W_i = 0$ for all $i\not\in I$. 
		Assume that $M$ is a $\theta_I$-stable $\Pi^{\mathbf w}$-module, with $\dim e_\infty M = 1$. Then $\dim M = (1, v)$, where $v\in \Z_{\ge 0}^{Q_0}$ is such that every sufficient $\Pi_0$-module $V$ with $\dim_i V = v_i$ for every $i\in I$ satisfies $\dim V\ge v$.
	\end{lemma}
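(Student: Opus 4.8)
The plan is to split the statement into a combinatorial reduction and a representation-theoretic core. Write $v' = (v_i)_{i \notin I}$ and $V' = (V_i)_{i\notin I}$, let $A'$ be the adjacency matrix of the full subquiver of the McKay graph on $Q_0\setminus I$, and put $C' = 2\,\mathrm{Id} - A'$. Deleting the nonempty set $I$ from the affine Dynkin McKay graph leaves a disjoint union of finite Dynkin diagrams (the fact underlying \cref{prop:LongPathGoesEverywhere}), so $C'$ is a finite-type Cartan matrix; in particular $C'^{-1}$ has non-negative entries. With $b_i := \sum_{j\in I,\ j\sim i} v_j$ for $i \notin I$, the sufficiency hypothesis on $V$ is exactly $(C'V')_i \ge b_i$ for all $i \notin I$, while the desired conclusion reduces (using $V_i = v_i$ on $I$) to $V' \ge v'$. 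Since multiplication by the entrywise non-negative matrix $C'^{-1}$ preserves inequalities, it is enough to prove the reverse estimate for the module itself, namely
\begin{equation*}
2v_i \;\le\; \sum_{j\sim i} v_j \qquad\text{for every } i \notin I. \tag{$\ast$}
\end{equation*}
Indeed $(\ast)$ reads $C'v' \le b \le C'V'$, whence $C'(V'-v')\ge 0$ and therefore $V'-v' = C'^{-1}\bigl(C'(V'-v')\bigr)\ge 0$.

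To prove $(\ast)$ I would argue on the $\Pi^{\mathbf w}$-module $M$ itself. Fix $i\notin I$; as $W_i = 0$ there are no framing arrows at $i$, so every arrow incident to $i$ is a doubled McKay arrow, and I assemble them into
\[\mathrm{out}_i\colon M_i \to \!\!\bigoplus_{a:\,\head a = i}\!\! M_{\tail a}, \qquad \mathrm{in}_i\colon \!\!\bigoplus_{a:\,\tail a = i}\!\! M_{\head a} \to M_i,\]
the domain of $\mathrm{in}_i$ being identified with the target of $\mathrm{out}_i$ via $a\leftrightarrow \bar a$. The preprojective relation \eqref{eqn:preprojectiverelation} at $i$ states precisely that the signed composite $\mathrm{in}_i\circ\mathrm{out}_i = \sum_a \pm\, x_{a} x_{\bar a}$ vanishes, so $\operatorname{im}(\mathrm{out}_i)\subseteq\ker(\mathrm{in}_i)$. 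From $\theta_I$-stability I then extract two facts. First, $\mathrm{out}_i$ is injective: its kernel, placed at $i$ and zero at every other vertex, is a submodule of $M$ supported away from $\infty$ and from $I$, so its $\theta_I$-value is $0$, and stability forces it to vanish. Second, $\mathrm{in}_i$ is surjective: dually, $\operatorname{coker}(\mathrm{in}_i)$ is a quotient module of $M$ supported away from $\infty$ and $I$, and the same $\theta_I$-computation (applied to the kernel of the quotient map) forces it to be $0$; equivalently, stability gives $M = \Pi^{\mathbf w} e_\infty M$, so every vector in $M_i$, $i\ne\infty$, is the image of an arrow entering $i$. Substituting these into the complex yields
\[v_i = \dim\operatorname{im}(\mathrm{out}_i) \le \dim\ker(\mathrm{in}_i) = \Bigl(\sum_{j\sim i} v_j\Bigr) - v_i,\]
which is $(\ast)$.

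The reduction and the non-negativity of $C'^{-1}$ in finite type are standard. The step requiring genuine care — and what I regard as the main obstacle — is the faithful passage from the preprojective relation to the vanishing of $\mathrm{in}_i\circ\mathrm{out}_i$ (matching the $M_{\head a}\to M_{\tail a}$ convention and the signs coming from the two orientations of each edge), together with verifying that the candidate kernel and cokernel really do organise into a sub-, respectively quotient, module so that $\theta_I$-stability applies verbatim. Edge multiplicities in the multiple-edge case ($\Gamma$ of type $\tilde A_1$) cause no trouble, provided both $(\ast)$ and the sufficiency condition are read off from the adjacency matrix $A'$ rather than from the bare neighbour set.
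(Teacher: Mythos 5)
Your proof is correct, and it is essentially the argument the paper is pointing at: the paper's own proof is a one-line citation to \cite[Lemma A.2]{CGGS}, and what you have written out is a complete, self-contained version of that argument — the reduction via the non-negativity of the inverse of the finite-type Cartan matrix on $Q_0\setminus I$, followed by the inequality $2v_i\le\sum_{j\sim i}v_j$ at each $i\notin I$ extracted from the preprojective relation together with the injectivity of $\mathrm{out}_i$ and surjectivity of $\mathrm{in}_i$ forced by $\theta_I$-stability (the latter using $W_i=0$ so that no framing arrows touch $i$). Your closing caveats (sign conventions in $\mathrm{in}_i\circ\mathrm{out}_i$, reading both inequalities off the adjacency matrix with multiplicities) are the right places to be careful, and none of them causes a problem.
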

	\begin{proof}
		This is proved exactly as \cite[Lemma A.2]{CGGS}.
	\end{proof}
	
	\begin{lemma}\label{lem:Sufficiency}
		Assume that $V$ is a finite-dimensional $ \Pi_{I,0}$-module, and assume that $U, U'$ are finite-dimensional $\Pi_0$-modules such that $U|_I \cong  U'|_I\cong V$. Assume furthermore that $\dim U\le \dim U'$, and that $U'$ is sufficient with respect to $I$. Then there is a canonical injective map of sets $f\colon\mathfrak{M}_{\theta_I}(U, W)(\C) \injto \mathfrak{M}_{\theta_I}(U', W)(\C)$. If $U$ is also sufficient with respect to $I$, this map is a bijection.
	\end{lemma}
	\begin{proof}
		Let $\bigoplus_{\alpha\in A} M_\alpha$ be a $\theta_I$-polystable $\Pi^{\mathbf w}$-module of dimension vector $(1, \dim U)$, with every $M_\alpha$ $\theta_I$-stable, representing a point of $\mathfrak{M}_{\theta_I}(U, W)(\C)$. Then there is a unique $\alpha$ (say $\alpha = a$) such that $\dim e_\infty M_a=1$. Because $M_a$ is $\theta_I$-stable, every other $M_\alpha$ is (see \cite[Remark 4.7]{CGGS}) a vertex simple module. Furthermore, by \cref{lem:DimensionUpperBound}, $\dim M_a\le \hat{v}$, where $\hat{v}$ is the dimension of the smallest sufficient $\Pi_0$-module $\hat{V}$ satisfying $\dim e_i\hat{V} = \dim e_i V$ for all $i\in I$. We can then build a $\theta_I$-polystable module of dimension $(1,\dim U')$ as $M_a\bigoplus_{i\not\in I} \oplus S_i^{\dim_i U'-\dim_i {M_a}}$. This provides the desired map. If $U$ is sufficient, this construction can clearly be inverted, and so we have $\mathfrak{M}_{\theta_I}(U, W)(\C) = \mathfrak{M}_{\theta_I}(U', W)(\C)$.
	\end{proof}
	
	\begin{corollary}\label{cor:Sufficiency2}
		Assume that $U $ is a finite-dimensional $\Pi_0$-module. Then, for any $\Pi_0$-module $V$ sufficient with respect to $I$ such that $V|_I = U|_I$, there is a canonical injective map of sets $f\colon\mathfrak{M}_{\theta_I}(U, W)(\C) \injto \mathfrak{M}_{\theta_I}(V, W)(\C)$. 
	\end{corollary}
	\begin{proof}
		Follows immediately from the above.
	\end{proof}
	
	\begin{remark}
		We strongly suspect that \cref{lem:Sufficiency} can be upgraded to an isomorphism of schemes, but we leave this question for future work. (But see \cite[Theorem 1.3]{GammelgaardBertsch} for a cornered version.)
	\end{remark}
	
	The following result provides our path from $\Cohfr{\P^2_I}$ to $\Pi^\mathbf{w}$-modules.
	\begin{proposition}\label{lem:VaVaBij}
		Let $V, W$ be finite-dimensional $\Pi_0$-modules. Then there is a canonical bijection 
		\[\mu_{V,W}\colon \Cohfr{\P^2}(V, W)\isoto \mathfrak M_{\theta}(V,W).\] Furthermore, this bijection is functorial, in the sense that if $V\subset V'$, $\shf F\in \Cohfr{\P^2}(V, W)$ and $\shf F'\in  \Cohfr{\P^2}(V', W)$, with a morphism $f\colon \shf F\to \shf F'$ of framed sheaves, then there is a corresponding morphism of $\Pi^{\mathbf w}$-modules $\phi\colon\mu_{V,W}(\shf F)\to\mu_{V', W}(\shf F')$, such that $\phi\circ\mu_{V,W} = \mu_{V',W}\circ f$.
	\end{proposition}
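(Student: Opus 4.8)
The plan is to pass to the commutative model and then run the $\Gamma$-equivariant monad (ADHM) machinery, which is essentially the content of \cite[Theorem~1]{VaVa}; the real work is matching our normalisations to theirs. First I would use the equivalence $\iota_1\colon\ECoh{\Gamma}{\P^2}\isoto\Coh{\P^2_\Gamma}$ of \cref{lem:EquivCategories} to reinterpret a $W$-framed torsion-free sheaf $(\shf F,\phi_{\shf F})$ as a $\Gamma$-equivariant torsion-free sheaf $E$ on the honest projective plane $\P^2$, equipped with a $\Gamma$-equivariant trivialisation $E|_L\isoto\OO_L\otimes\widehat W$ along the line $L=\{z=0\}$. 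The compatibility clause of \cref{lem:EquivCategories} ensures that $c^*\shf F$ corresponds to $E|_L$, so the framing translates correctly, and the condition $H^1(\P^2_\Gamma,\shf F(-1))=V$ becomes $H^1(\P^2,E(-1))=V$ as a $\Gamma$-module.

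Next I would invoke the Beilinson monad for framed torsion-free sheaves on $\P^2$. Writing $[x:y:z]$ for homogeneous coordinates with $L=\{z=0\}$, torsion-freeness together with the framing forces the relevant cohomology vanishings, and $E$ is recovered as the cohomology of a monad
\[
V\otimes\OO(-1)\xrightarrow{\ \alpha\ }\bigl(V\otimes\C^2\oplus\widehat W\bigr)\otimes\OO\xrightarrow{\ \beta\ }V\otimes\OO(1),
\]
where $\C^2=\langle x,y\rangle$ carries the standard $\Gamma$-action, and $\alpha,\beta$ are assembled linearly from $x,y,z$ and four linear maps $B_1,B_2\in\operatorname{End}(V)$, $I\in\Hom(\widehat W,V)$, $J\in\Hom(V,\widehat W)$. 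The relation $\beta\alpha=0$ is equivalent to the ADHM equation $[B_1,B_2]+IJ=0$, while injectivity of $\alpha$ and surjectivity of $\beta$ as sheaf maps are equivalent to $E$ being torsion-free with the prescribed framing.

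Everything is $\Gamma$-equivariant, so I would then decompose into isotypic components $V=\bigoplus_i V_i\otimes\rho_i$, $\widehat W=\bigoplus_i W_i\otimes\rho_i$. The map $B=(B_1,B_2)\colon V\to V\otimes\C^2$ decomposes, via the McKay correspondence (which identifies $\dim\Hom_\Gamma(\rho_i,\rho_j\otimes\C^2)$ with the number of arrows between $i$ and $j$), into the arrow maps of the doubled McKay quiver, while $I,J$ supply the arrows to and from the framing vertex $\infty$, with $\dim e_\infty=1$ recording $W$ through its $\mathbf w_i$ edges. The $\Gamma$-invariant ADHM equation becomes precisely the preprojective relation $\mathcal J^{\mathbf w}$, so $(B_1,B_2,I,J)$ is the same datum as a $\Pi^{\mathbf w}$-module $\mu(\shf F)$ of dimension $(1,\mathbf v)$ with $\mathbf v=\dim V$. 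The ADHM stability condition — that $V$ has no proper $B$-invariant subspace containing $\operatorname{im}I$ — matches $\theta_{Q_0}$-stability: such a subspace yields a proper submodule containing the entire framing vertex, and $\theta_{Q_0}$ (positive on every McKay vertex, suitably negative at $\infty$) assigns it strictly negative weight, hence destabilising, whereas submodules supported away from $\infty$ automatically have positive weight; a short computation also shows that for dimension $(1,\mathbf v)$ no proper submodule has weight $0$, so $\theta_{Q_0}$ is generic and semistable equals stable. Running the monad construction backwards sends a $\theta_{Q_0}$-stable $\Pi^{\mathbf w}$-module to a framed torsion-free sheaf, and functoriality of the Beilinson spectral sequence shows the two assignments are mutually inverse on isomorphism classes, yielding the bijection $\mu$.

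Finally, for functoriality I would use that the whole monad construction is natural in $E$: a morphism $f\colon\shf F\to\shf F'$ of framed sheaves induces $H^1(f(-1))\colon V\to V'$, which is automatically compatible with the arrow maps $B_1,B_2,I,J$ read off from the natural monads and restricts to the identity on the framing vertex, hence is a morphism of $\Pi^{\mathbf w}$-modules $\mu(\shf F)\to\mu(\shf F')$. The main obstacle is the stability dictionary: one must verify $\Gamma$-equivariantly that torsion-freeness plus an isomorphism-framing along $L$ is equivalent to $\theta_{Q_0}$-stability of the associated quiver representation, and that no $\C$-points are lost or doubled in passing between sheaves and stable modules. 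Since all of this is carried out in \cite{VaVa}, the residual labour is bookkeeping: confirming that the normalisation $H^1(\shf F(-1))=V$, the framing locus $L$, and the parameter $\theta_{Q_0}$ (which lies in the chamber $C^+$ of \cref{subsec:interpretations}) agree with the conventions there.
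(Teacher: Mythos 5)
Your proposal is correct and follows essentially the same route as the paper: the paper's proof is simply the citation of \cite[Theorem 1]{VaVa} combined with the equivalence of \cref{lem:EquivCategories}, with functoriality read off from the monad construction in \emph{ibid}. What you have written is an expanded account of what that citation contains (the equivariant Beilinson monad, the McKay decomposition, and the stability dictionary), so it matches the paper's argument rather than offering a different one.
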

	
	\begin{proof}
		The first is \cite[Theorem 1]{VaVa}, combined with \cref{lem:EquivCategories}. The functoriality is not shown directly in \cite{VaVa}, but it follows straightforwardly from the construction in \emph{ibid}.
	\end{proof}
	We shall write $\mu $ for $ \mu_{V,W}$ when no confusion is likely.
	
	\begin{theorem}\label{thm:main}
		Choose finite-dimensional $\Pi_{I,0}$-modules $V, W$, and let $V'$ be a sufficient $\Pi_0$-module such that $V'|_I = V$. 
		Consider $W$ as a $\Pi_0$-module where $W_i = 0$ for any $i\notin I$.
		
		There is a canonical bijection $\Cohfr{\P^2_{I}}(V, W)\isoto \mathfrak{M}_{\theta_I}(V', W)(\C)$.
	\end{theorem}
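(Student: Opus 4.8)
The plan is to build $\mu$ as a composite of three canonical bijections: a sheaf-theoretic one furnished by the pair $(\tau^T,\tau_*)$, the quiver description of framed sheaves on $\P^2_\Gamma$ from \cref{lem:VaVaBij}, and the bookkeeping of \cref{lem:DimensionUpperBound} and \cref{lem:Sufficiency}. First I would identify the essential image of $\tau^T$. Every $\shf F\in\Cohfr{\P^2_I}(V,W)$ is torsion-free, so by \cref{lem:EquivalentTorFreeInvIm} and \cref{lem:EquivalencesOfTorFreeInvIm} its torsion-free inverse image $\tau^T\shf F\in\Cohfr{\P^2_\Gamma}(W)$ is torsion-free with $H^1(\P^2_\Gamma,(\tau^T\shf F)(-1))|_I=V$ (combining \cref{lem:EquivalencesOfTorFreeInvIm} with \cref{lem:CohomUpAndDown}). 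Since $\tau_*\tau^T\shf F=\shf F$ by \cref{lem:TorFreeEquality}, while $\tau_*$ sends any torsion-free framed $\shf E$ back to a torsion-free framed sheaf (\cref{lem:torFreeLocFreeDown}, \cref{prop:PreservesFraming}) with $H^1(\P^2_I,(\tau_*\shf E)(-1))=H^1(\P^2_\Gamma,\shf E(-1))|_I$ (\cref{lem:CohomUpAndDown}), the functors $\tau^T$ and $\tau_*$ restrict to mutually inverse bijections between $\Cohfr{\P^2_I}(V,W)$ and
\[ \mathcal B=\left\{\,\shf E\in\Cohfr{\P^2_\Gamma}(W)\ \text{torsion-free}\ \middle|\ \tau^T\tau_*\shf E\cong\shf E,\ H^1(\P^2_\Gamma,\shf E(-1))|_I=V\,\right\}, \]
where the condition $\tau^T\tau_*\shf E\cong\shf E$ says precisely that $\shf E$ lies in the essential image of $\tau^T$.

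Next I would pass to the quiver side. Applying \cref{lem:VaVaBij} for each dimension vector $U$ with $U|_I=V$, the bijection $\mu$ turns $\mathcal B$ into a set of $\theta_{Q_0}$-stable $\Pi^{\mathbf w}$-modules $M$ of dimension $(1,\dim U)$ with $\dim e_iM=\dim e_iV$ for all $i\in I$. Inspecting the weights defining $\theta_I$ shows that a $\Pi^{\mathbf w}$-module of dimension $(1,\mathbf v)$ is $\theta_{Q_0}$-stable exactly when it is generated by its framing component $e_\infty M$, and is $\theta_I$-stable exactly when it is so generated \emph{and} has no nonzero subrepresentation supported on $Q_0\setminus I$. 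The crux of the proof is therefore the equivalence, for $M=\mu(\shf E)$,
\[ \tau^T\tau_*\shf E\cong\shf E\iff M\ \text{is}\ \theta_I\text{-stable}. \]

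This equivalence is where I expect the main difficulty to lie. To prove it I would translate the operation $\shf E\mapsto\tau^*\tau_*\shf E$ into the recollement language: on module representatives it is the passage $N\mapsto j_!j^*N$ to the subsheaf generated by the $I$-part, and $\tau^T$ then removes the maximal subsheaf annihilated by $c^*$. Thus $\tau^T\tau_*\shf E\cong\shf E$ means that $\shf E$ is generated by its $I$-part modulo a $c^*$-trivial (Artin) subsheaf. Using the functoriality of $\mu$ from \cref{lem:VaVaBij} and the compatibility of $c^*$ with $\tau_*$ and $\tau^*$, I would match this with the absence of nonzero subrepresentations of $M$ supported away from $I$, the extra condition isolating $\theta_I$-stability inside $\theta_{Q_0}$-stability. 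The delicate point is the dictionary between such subrepresentations and $c^*$-trivial subsheaves, which I would set up by analysing how vertex-simple subquotients at vertices outside $I$ correspond, under $\mu$, to Artin subsheaves killed by $c^*$ (cf. \cref{lem:TorFreeSheafRank0IsNil}).

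Granting this, $\mu$ identifies $\mathcal B$ with the set $\mathcal S$ of isomorphism classes of $\theta_I$-stable $\Pi^{\mathbf w}$-modules $M$ with $\dim e_\infty M=1$ and $\dim e_iM=\dim e_iV$ for $i\in I$. It remains to match $\mathcal S$ with $\mathfrak M_{\theta_I}(V',W)(\C)$. As in the proof of \cref{lem:Sufficiency}, a closed point of the latter is the $S_{\theta_I}$-equivalence class of a $\theta_I$-polystable module $M_a\oplus\bigoplus_{i\notin I}S_i^{\oplus m_i}$ with a unique $\theta_I$-stable summand $M_a$ satisfying $\dim e_\infty M_a=1$; here $m_i=\dim e_iV'-\dim e_iM_a$, and \cref{lem:DimensionUpperBound} together with the sufficiency of $V'$ guarantees $\dim M_a\le\dim V'$ componentwise, so these multiplicities are non-negative. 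The assignment of $M_a$ to such a class is a bijection $\mathfrak M_{\theta_I}(V',W)(\C)\isoto\mathcal S$. Composing the three bijections yields the canonical $\mu\colon\Cohfr{\P^2_I}(V,W)\isoto\mathfrak M_{\theta_I}(V',W)(\C)$, and retracing the chain through \cref{lem:TorFreeEquality} shows the two directions are mutually inverse.
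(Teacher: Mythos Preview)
Your overall architecture is close to the paper's: pass from $\P^2_I$ to $\P^2_\Gamma$ via $\tau^T$ and $\tau_*$, invoke \cref{lem:VaVaBij}, and finish with \cref{lem:DimensionUpperBound} and \cref{lem:Sufficiency}. The difference is that you try to factor through an explicit characterisation of the essential image $\mathcal B$ of $\tau^T$ as precisely the $\theta_I$-stable locus, and this is where the argument breaks down.

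The proposed dictionary ``subrepresentations of $\mu(\shf E)$ supported on $Q_0\setminus I$ correspond to $c^*$-trivial Artin subsheaves'' is not well-founded. First, $c^*$-triviality is a condition about support away from $\P^1_\Gamma$ and has nothing to do with the subset $I$; any torsion-free sheaf on $\P^2_\Gamma$ already has no nonzero $c^*$-trivial subsheaf (\cref{lem:TorFreeSheafRank0IsNil}), so this cannot be what distinguishes $\mathcal B$. Second, you are implicitly conflating two different modules: the graded $\Pibullet$-module representing $\shf E$ in $\cat{qgr}$, on which the recollement functors $j_!,j^*$ act, and the ADHM datum $\mu(\shf E)=H^1(\P^2_\Gamma,\shf E(-1))$ as a $\Pi^{\mathbf w}$-module. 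Submodules of the latter do not correspond to subsheaves of $\shf E$ in any direct way; the functoriality in \cref{lem:VaVaBij} only tells you that morphisms of framed sheaves induce morphisms of ADHM data, and vertex-simple summands $S_i$ carry no framing, so they do not arise from framed sheaves at all. In fact nothing in your sketch rules out the possibility that $\mu(\tau^T\shf F)$ is merely $\theta_I$-semistable with the $\theta_I$-stable piece $M_a$ corresponding to a strictly larger sheaf $\shf H\supsetneq\tau^T\shf F$ satisfying $\tau_*\shf H=\shf F$---and this is exactly the situation the paper's proof must handle.

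The paper avoids the need for any such characterisation. It sends $\shf F$ to the image of $\mu(\tau^T\shf F)$ under the variation-of-GIT morphism $\mathfrak M_\theta\to\mathfrak M_{\theta_I}$, then proves injectivity by extracting the unique framed summand $M_a$ of the $\theta_I$-polystable representative, realising the corresponding sheaf $\shf H$ as a target of an injection $\tau^T\shf F\hookrightarrow\shf H$ with $\tau_*$-trivial cokernel, so that $\tau_*\shf H=\shf F$. The inverse map lifts along $\mathfrak M_\theta\to\mathfrak M_{\theta_I}$ and pushes down by $\tau_*$, with well-definedness and injectivity established by the same $S$-equivalence analysis. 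None of this requires knowing whether $\tau^T\shf F$ itself corresponds to $M_a$. To repair your argument you would either have to prove the equivalence $\tau^T\tau_*\shf E\cong\shf E\Leftrightarrow\mu(\shf E)$ is $\theta_I$-stable directly from the monad description underlying \cref{lem:VaVaBij}---which is substantially harder than what you sketch---or abandon the characterisation of $\mathcal B$ and argue via $S_{\theta_I}$-equivalence as the paper does.
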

	
	\begin{proof}
		Let $\shf F\in \Cohfr{\P^2_{I}}(V, W)$. Then, by \cref{prop:PreservesFraming}, \cref{prop:realExting}, and \cref{lem:EquivalencesOfTorFreeInvIm}, $\tau^T\shf F\in \Cohfr{\P^2_\Gamma}(U, W)$ for some $U$ such that $ U|_I = V$. Then, by \cref{lem:VaVaBij} $\tau^T\shf F$ corresponds to a closed point $y(\shf F)$ of the Nakajima quiver variety $\mathfrak{M}_{\theta}(U, W)$,  and $y(\shf F)$ maps to a point $\hat g(\shf F)\in \mathfrak{M}_{\theta_I}(U, W)$.

		This defines a map of sets \[\hat{g}\colon \Cohfr{\P^2_{I}}(V, W)\to  \bigsqcup_{U}\mathfrak{M}_{\theta_I}(U, W).\] 
		
		By \cref{cor:Sufficiency2}, there is for every $U$ a canonical map $\mathfrak{M}_{\theta_I}(U, W)\injto \mathfrak{M}_{\theta_I}(V', W)$. Composing these with $\hat{g}$,  
		we obtain a canonical map of sets \[g\colon \Cohfr{\P^2_{I}}(V, W)\to \mathfrak{M}_{\theta_I}(V', W).\]

		We now construct the inverse map to $g$. So let $x\in\mathfrak{M}_{\theta_I}(V', W)$ be a point, and choose a lift of $x$ to a point  $\tilde{x}\in \mathfrak{M}_{\theta}(V', W) $. This point corresponds uniquely (again by \cref{lem:VaVaBij}) to a framed sheaf $\shf G(\tilde x)\in\Cohfr{\P^2_{\Gamma}}(V', W) $, and we have $\tau_*\shf G(\tilde x)\in \Cohfr{\P^2_{I}}(V', W)$. Let us first show the the map $x\mapsto \tau_*\shf G(\tilde x)$ is well-defined: Let $\tilde x_1, \tilde x_2\in \mathfrak{M}_{\theta}(V', W) $ be two different lifts of of $x$, corresponding to framed sheaves $\shf G_1, \shf G_2 $. The $\theta$-stable modules respectively corresponding to $\tilde x_1, \tilde x_2$  are then $\theta_I$-equivalent, and the polystable element of the equivalence class again has a unique direct summand $M_a$ such that $\dim e_\infty M_a=1$. Just as before, $M_a$ must be $\theta$-stable, and then corresponds to a framed sheaf $\shf H\in \Cohfr{\P^2_{\Gamma}}(U, W)$ for some $U$ with $U|_I = V'|_I$. Similarly, we obtain injective morphisms $\shf H\injto \shf G_1, \shf H\injto \shf G_1$. But as above, we find that $\tau_*\shf H = \tau_*\shf G_1 = \tau_*\shf G_2$, therefore the map $f\colon x\mapsto \tau_*\shf G(\hat x)$ is well-defined.

		It is clear that $f(g(\shf F)) = \shf F$ for any $\shf F\in \Cohfr{\P^2_I}(V, W)$, and $g(f(x)) = x$ for any $x\in \mathfrak{M}_{\theta_I}(V', W)$. 
		Thus we have now constructed a bijection $\mathfrak{M}_{\theta_I}(V', W)(\C)\cong \Cohfr{\P^2_I}(V, W)$.
		This concludes the proof.
	\end{proof}
	
	\subsection{Factoring morphisms}
	Consider now the setting of having two sets of representations $I_1\subset I_2\subset Q_0$. Let us write ${j_1}_!, {j_1}^*, {\tau_1}_*, {\tau_1}^*, {\tau_1}^T$ for the functors $j_!, j^*, \tau^*, \tau_*, {\tau}^T$ defined with respect to $I_1$, similarly for the functors defined with respect to $I_2$.
	Let us define 'partial' versions of $\tau_*, \tau^T$: Set ${j_{1,2}}_!\colon \Pibullet_{I_1}\textrm{-}\cat{mod}\to \Pibullet_{I_2}\textrm{-}\cat{mod}$ be the functor given by ${j_{1,2}}_!(M) \coloneqq e_{I_2}\Pibullet e_{I_1}\otimes_{\Pibullet_{I_1}} M$. Similarly, define  ${j_{1,2}}^*\colon \Pibullet_{I_2}\textrm{-}\cat{mod}\to \Pibullet_{I_1}\textrm{-}\cat{mod}$ by ${j_{1,2}}^*(N)\coloneqq e_{I_1}\Pibullet e_{I_2}\otimes_{\Pibullet_{I_2}} N$. 
	\begin{corollary}
		The functors ${j_1}_!, {j_1}^*$ factor as ${j_1}_! = {j_2}_!\circ {j_{1,2}}_!$, respectively ${j_1}^* = {j_{1,2}}^*\circ{j_2}^*$.
	\end{corollary}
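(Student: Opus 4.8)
The plan is to reduce everything to the elementary idempotent relations coming from $I_1\subseteq I_2$ together with the associativity of the tensor product. The key preliminary observation is that, since $I_1\subseteq I_2$, the vertex idempotents satisfy $e_{I_1}e_{I_2}=e_{I_2}e_{I_1}=e_{I_1}$, and hence $e_{I_1}=e_{I_2}e_{I_1}e_{I_2}$ is an idempotent lying inside the corner $\Pibullet_{I_2}=e_{I_2}\Pibullet e_{I_2}$. A short computation then gives $e_{I_1}\Pibullet_{I_2}e_{I_1}=e_{I_1}\Pibullet e_{I_1}=\Pibullet_{I_1}$, so that $\Pibullet_{I_1}$ is itself a corner of $\Pibullet_{I_2}$ and the partial functors ${j_{1,2}}_!,{j_{1,2}}^*$ are precisely the recollement functors of \cref{lem:RecollementFunctors} attached to the idempotent $e_{I_1}\in\Pibullet_{I_2}$. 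The corollary is thus a \emph{transitivity of cornering} statement, which I would prove by direct manipulation of the defining tensor products.

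For the factorization of $j^*$, I would expand
\[
{j_{1,2}}^*\bigl({j_2}^*(N)\bigr)=e_{I_1}\Pibullet e_{I_2}\otimes_{\Pibullet_{I_2}}\bigl(e_{I_2}\Pibullet\otimes_{\Pibullet}N\bigr),
\]
reassociate the tensor products, and apply twice the standard contraction isomorphism $eA\otimes_A N\isoto eN$, $ea\otimes n\mapsto ean$, valid for any ring $A$, idempotent $e$, and left module $N$. Using $e_{I_1}\Pibullet e_{I_2}=e_{I_1}\Pibullet_{I_2}$, this collapses the right-hand side to $e_{I_1}e_{I_2}N=e_{I_1}N\cong {j_1}^*(N)$. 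Dually, for $j_!$ I would first rewrite $e_{I_2}\Pibullet e_{I_1}=\Pibullet_{I_2}e_{I_1}$ (again from $e_{I_1}=e_{I_2}e_{I_1}$), so that
\[
{j_2}_!\bigl({j_{1,2}}_!(M)\bigr)=\Pibullet e_{I_2}\otimes_{\Pibullet_{I_2}}\Pibullet_{I_2}e_{I_1}\otimes_{\Pibullet_{I_1}}M,
\]
and then apply the dual contraction $X\otimes_A Ae\isoto Xe$ to collapse $\Pibullet e_{I_2}\otimes_{\Pibullet_{I_2}}\Pibullet_{I_2}e_{I_1}$ to $\Pibullet e_{I_2}e_{I_1}=\Pibullet e_{I_1}$, leaving $\Pibullet e_{I_1}\otimes_{\Pibullet_{I_1}}M={j_1}_!(M)$.

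The only points requiring genuine care — rather than a serious obstacle — are bookkeeping ones: confirming that $e_{I_1}$ really is an idempotent of $\Pibullet_{I_2}$ so that ${j_{1,2}}_!,{j_{1,2}}^*$ are well-defined, and verifying that the contraction isomorphisms above respect all the relevant bimodule structures. Since each isomorphism is the canonical multiplication (resp.\ associativity) map, it is automatically natural in $M$ and $N$, so the objectwise identifications assemble into the asserted natural isomorphisms of functors ${j_1}_!\cong {j_2}_!\circ{j_{1,2}}_!$ and ${j_1}^*\cong{j_{1,2}}^*\circ{j_2}^*$. No homological input is needed: the statement is a purely formal consequence of the corner structure.
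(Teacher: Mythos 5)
Your argument is correct and is essentially the paper's own proof, which simply asserts that the factorizations are ``obvious from the definition'': your idempotent identities $e_{I_1}e_{I_2}=e_{I_1}$ and the contraction isomorphisms $eA\otimes_A N\cong eN$, $X\otimes_A Ae\cong Xe$ are exactly the computation being left implicit there. No issues.
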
\begin{proof}
		Obvious from the definition.
	\end{proof}
	
	Because of this factorisation, it is clear that ${j_{1,2}}_!,\ {j_{1,2}}^*$ take finite-dimensional modules to finite-dimensional modules.
	We then have:
	\begin{corollary}\label{cor:factorise}
		The functors ${j_{1,2}}_!,\ {j_{1,2}}^*$ induce functors $\tau_{1,2}^*\colon \Coh{\P^2_{I_1}}\to \Coh{\P^2_{I_2}},\ {\tau_{1,2}}_*\colon \Coh{\P^2_{I_2}}\to \Coh{\P^2_{I_1}}$, and we have factorisations
		\[{\tau_1}_* = {\tau_{1,2}}_*\circ {\tau_{2}}_*,\quad {\tau_1}^* = {\tau_{2}}^*\circ {\tau_{1,2}}^*.\]
		
		Furthermore, define $\tau_{1,2}^T\colon \Coh{\P^2_{I_1}}\to \Coh{\P^2_{I_2}}$ by setting $\tau_{1,2}^T(\shf F)$ to be the quotient of $\tau_{1,2}^*(\shf F)$ by its maximal subsheaf $\shf T$ with $c^*\shf T=0$. Then there are factorisations 
		\[ {\tau_1}^T = {\tau_{2}}^T\circ {\tau_{1,2}}^*,\quad {\tau_1}^T = {\tau_{2}}^T\circ {\tau_{1,2}}^T.\]
	\end{corollary}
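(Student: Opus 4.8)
The plan is to establish the two sets of factorisations separately, in each case reducing everything to identities between the underlying module-level functors $j_\bullet$ and then descending to $\cat{qgr}$ via $\pi$. For the first claim, the functor-level factorisations ${j_1}_! = {j_2}_!\circ {j_{1,2}}_!$ and ${j_1}^* = {j_{1,2}}^*\circ{j_2}^*$ are already given by the preceding corollary, and these are just associativity-of-tensor-product identities: for instance,
\[
{j_2}_!{j_{1,2}}_!(M) = \Pibullet e_{I_2}\otimes_{\Pibullet_{I_2}}\left(e_{I_2}\Pibullet e_{I_1}\otimes_{\Pibullet_{I_1}}M\right) \cong \Pibullet e_{I_1}\otimes_{\Pibullet_{I_1}}M = {j_1}_!(M),
\]
using $\Pibullet e_{I_2}\otimes_{\Pibullet_{I_2}}e_{I_2}\Pibullet e_{I_1} = \Pibullet e_{I_1}$ (which holds because $e_{I_2}$ acts as the identity on $e_{I_2}\Pibullet e_{I_1}$ when $I_1\subset I_2$). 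The analogous computation handles ${j_1}^*$. Since $\tau_*, \tau^*, \tau_{1,2}^*, {\tau_{1,2}}_*$ are all \emph{defined} by applying $\pi$ to the respective $j$-functors, and $\pi$ is a functor, the factorisations ${\tau_1}_* = {\tau_{1,2}}_*\circ {\tau_{2}}_*$ and ${\tau_1}^* = {\tau_{2}}^*\circ {\tau_{1,2}}^*$ follow immediately, provided one checks that the module-level identities descend — this is automatic because $\pi$ carries a natural isomorphism of module functors to a natural isomorphism of sheaf functors.

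The second pair of claims concerning $\tau^T$ is the genuinely delicate part, since $\tau^T$ is \emph{not} defined by a single tensor-product functor but as a quotient by a maximal $c^*$-killed subsheaf. First I would prove ${\tau_1}^T = {\tau_{2}}^T\circ {\tau_{1,2}}^*$. By definition, ${\tau_1}^T\shf F$ is the quotient of ${\tau_1}^*\shf F = {\tau_2}^*{\tau_{1,2}}^*\shf F$ by its maximal subsheaf killed by $c^*$, whereas ${\tau_2}^T({\tau_{1,2}}^*\shf F)$ is the quotient of ${\tau_2}^*({\tau_{1,2}}^*\shf F)$ by \emph{its} maximal $c^*$-killed subsheaf. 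These are the same sheaf and the same quotient, since the maximal subsheaf with $c^* = 0$ is intrinsic to the object ${\tau_2}^*{\tau_{1,2}}^*\shf F$ on $\P^2_\Gamma$ and does not remember how that object was produced. Thus this factorisation is essentially formal once the first-level factorisation ${\tau_1}^* = {\tau_2}^*{\tau_{1,2}}^*$ is in hand.

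The main obstacle is the final identity ${\tau_1}^T = {\tau_{2}}^T\circ {\tau_{1,2}}^T$, where the source has already been passed through $\tau_{1,2}^T$ (a quotient on $\P^2_{I_2}$) rather than $\tau_{1,2}^*$. Here the difficulty is that the two constructions could a priori kill different amounts: going through ${\tau_{1,2}}^T$ first removes a subsheaf \emph{on $\P^2_{I_2}$}, and one must verify that applying ${\tau_2}^T$ afterwards produces the same quotient as applying ${\tau_2}^T$ directly to ${\tau_{1,2}}^*\shf F$. The strategy is to compare the two natural surjections ${\tau_2}^*{\tau_{1,2}}^*\shf F \twoheadrightarrow {\tau_2}^T{\tau_{1,2}}^*\shf F$ and ${\tau_2}^*{\tau_{1,2}}^T\shf F \twoheadrightarrow {\tau_2}^T{\tau_{1,2}}^T\shf F$, and to show the kernel $\shf T$ of ${\tau_{1,2}}^*\shf F\twoheadrightarrow{\tau_{1,2}}^T\shf F$ (which satisfies $c^*\shf T = 0$ on $\P^2_{I_2}$) gives rise, after applying the right-exact ${\tau_2}^*$, to a subsheaf ${\tau_2}^*\shf T$ of ${\tau_2}^*{\tau_{1,2}}^*\shf F$ that is already contained in the maximal $c^*$-killed subsheaf. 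This last containment is exactly the kind of statement proved in \cref{lem:TorFreeEquality}: one checks $c^*({\tau_2}^*\shf T) = 0$ using that $c^*\shf T = 0$ together with the compatibility of $c^*$ with $\tau^*$ (as in \cref{prop:PreservesFraming}), so that ${\tau_2}^*\shf T$ is absorbed into the subsheaf removed by ${\tau_2}^T$. Once this is established, both sides equal the quotient of ${\tau_2}^*{\tau_{1,2}}^*\shf F$ by its maximal $c^*$-killed subsheaf, which is ${\tau_1}^T\shf F$ by the previous paragraph, completing the argument.
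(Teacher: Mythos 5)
Your proposal is correct and follows the same route as the paper, whose entire proof is the one-liner ``Also clear, using \cref{prop:PreservesFraming}'': you derive the $\tau$-factorisations from the module-level identities for the $j$-functors and use the compatibility of $c^*$ with $\tau^*$ from \cref{prop:PreservesFraming} to handle the $\tau^T$-factorisations. Your expansion of the final identity ${\tau_1}^T = {\tau_{2}}^T\circ {\tau_{1,2}}^T$ (showing the image of ${\tau_2}^*\shf T$ lands inside the maximal $c^*$-killed subsheaf, so both sides compute the same quotient of ${\tau_2}^*{\tau_{1,2}}^*\shf F$) is exactly the content the paper leaves implicit, and it is sound.
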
\begin{proof}
		Also clear, using \cref{prop:PreservesFraming}.
	\end{proof}
	
	\begin{proof}[Proof of \cref{cor:firstFunctorial}]
		Taking into account the result of \cref{cor:factorise} together with those from \cref{sec:CorneredP2Geometry}, we have now proved the statement.
		
	\end{proof}
	
	The point of introducing these functors is to 
	prove the following natural statement:
	\begin{proposition}
		Choose two subsets $I_1\subset I_2\subset Q_0$ and a $\Pi_{I_2,0}$-module $V$. Assume that there is a $\Pi_0$-module $V'$ such that $V'|_{I_2} = V$, and that $V$ is sufficient for for $I_2$.  
		
		Let $\theta_{1,2}$ be the morphism $\mathfrak M_{\theta_2}(V',W)\to \mathfrak M_{\theta_1}(V', W)$ induced by variation of GIT parameter $\theta_{I_2}\rightsquigarrow \theta_{I_1}$.  Let $f_{1,2}$ be the map of sets $\Cohfr{\P^2_{I_2}}(V,W)\to \Cohfr{\P^2_{I_1}}(V|_{I_1},W)$ induced by the functor ${\tau_{1,2}}_*$, and let $\mu_1, \mu_2$ be the bijections given by \cref{thm:main}, for $I_1$, respectively $I_2$. Then the diagram
		\begin{equation}\label{diag:factorisation}
			\begin{tikzcd}
				{\mathfrak M_{\theta_2}(V,W)}(\C) \arrow[d, "{\theta_{1,2}}"] \arrow[r, "\mu_2"] & {\Cohfr{\P^2_{I_2}}(V,W)} \arrow[d, "{f_{1,2}}"] \\
				{\mathfrak M_{\theta_1}(V, W)}(\C) \arrow[r, "\mu_1"]                            & {\Cohfr{\P^2_{I_1}}(V,W)}                       
		\end{tikzcd}\end{equation} commutes.
	\end{proposition}
	
	\begin{proof}
		To do this, simply extend \eqref{diag:factorisation} to the following diagram
		\begin{equation}\label{diag:extendedfactorisation}
			\begin{tikzcd}
				{\mathfrak{M}_{\theta}(V, W)} \arrow[d, "\theta_2"] \arrow[r, "\mu"] \arrow[dd, "\theta_1"', bend right=70] & {\Cohfr{\P^2_{Q_0}}(V,W)} \arrow[d, "f_2"] \arrow[dd, "f_1", bend left=70] \\
				{{\mathfrak M_{\theta_2}(V,W)}(\C)} \arrow[d, "{\theta_{1,2}}"] \arrow[r, "\mu_2"]                          & {\Cohfr{\P^2_{I_2}}(V,W)} \arrow[d, "{f_{1,2}}"]                      \\
				{{\mathfrak M_{\theta_1}(V, W)}(\C)} \arrow[r, "\mu_1"]                                                     & {\Cohfr{\P^2_{I_1}}(V,W)}                                            
		\end{tikzcd}\end{equation}
		where we know, by the proof of \cref{thm:main} that $\mu_2\theta_2 = f_2\mu$ and $\mu_1\theta_1 = f_1\mu$. 
		
		We also know from the proof of \cref{thm:main} that since $V$ is $I_2$-sufficient, the morphism $\theta_2$ is surjective. So assume now that $m\in {{\mathfrak M_{\theta_2}(V,W)}(\C)} $. There is then a $\hat{m}\in \mathfrak{M}_{\theta}(V, W)$ such that $\theta_2(\hat{m}) = m$, and we obtain 
		\[\mu_1\theta_{1,2}(m) = \mu_1\theta_1(\hat{m}) = f_1\mu(\hat{m}) = f_{1,2}f_2\mu(\hat{m}) = f_{1,2}\mu_2\theta_2(\hat{m}) = f_{1,2}\mu_2(m) ,\] as desired.
	\end{proof}

	\section{Connections to previous results}\label{sec:connections}
	In this section we show that \cref{thm:main} generalises previous results on quiver varieties - at least on the level of bijections of closed points.
	
	\subsection{Equivariant Quot schemes}
	Recall that we have defined the two dimension vectors $\delta = n(\dim \rho_i)_{i\in Q_0},\quad \ol 1 = (1, 0,\dots, 0)\in \Z_{\ge 0}^{Q_0}$. Then \cite[Corollary 6.7]{CrawYamagishi} (which strengthens and corrects results of \cite{CGGS2}) shows that $\mathfrak{M}_{\theta_I}(n\delta, \ol 1)$ is isomorphic to an \emph{equivariant Quot scheme} $\QuotInI([\C^2/\Gamma])$.
	This space is (see \cite[Section 3.2]{CGGS2} for the definition) the moduli space of isomorphism classes of $\Pi_I$-module quotients $e_I\Pi e_0\to Q$, such that $\dim e_iQ=n\delta_i$.
	
	We will show that this description agrees, on the level of closed points, with that from \cref{thm:main}.
	
	To start, we define an 'open restriction' functor $r^*\colon \Coh{\P^2_I}\to \Pi_I-\cat{mod}$, constructed very similarly to \cite[Section 5.1]{BGK}. 
	Let $M = \bigoplus_{k\ge 0} M_k$ be a graded $\Pibullet_I$-module, then we can define $\hat{r}^*M = M/(z_I-e_I)M$, which obtains the structure of a $\Pibullet_I/((z_I-e_I)\Pibullet_I) = \Pi_I$-module. 
	
	We then have \[\Pi_I = \hat{r}_*\Pibullet_I = \varinjlim \Pibullet_{I, k},\] where the direct limit is taken with respect to the injective morphisms $(z_I\cdot)\colon \Pibullet_{I,k}\to \Pibullet_{I,k+1}$. This allows us to rewrite $\hat r^*$ as \[\hat r^*\colon M = \bigoplus_{k \ge 0} M_k\mapsto \varinjlim M_k,\] again with the limit taken over the morphisms $(z_I\cdot)\colon M_k\to M_{k+1}$. 
	
	\begin{lemma}\label{lem:OpenResProperties}
		The functor $\hat r^*\colon \Pibullet_I-\cat{mod}\to \Pi_I-\cat{mod}$ factors through $\Coh{\P^2_I}$. Write $r^*\colon\Coh{\P^2_I}\to \Pi_I-\cat{mod} $ for the induced functor.
		Then $r^*$ is exact.
	\end{lemma}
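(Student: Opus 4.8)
The plan is to deduce both assertions from the colimit description $\hat r^*(M)=\varinjlim_k M_k$ recorded just above the lemma (the transition maps being $(z_I\cdot)\colon M_k\to M_{k+1}$), together with the universal property of the Serre quotient $\pi\colon \cat{gr}(\Pibullet_I)\to \cat{qgr}(\Pibullet_I)=\Coh{\P^2_I}$. The strategy is: (i) check $\hat r^*$ kills the Serre subcategory $\cat{tor}(\Pibullet_I)$; (ii) check $\hat r^*$ is exact on $\cat{gr}(\Pibullet_I)$; (iii) invoke the universal property, which simultaneously yields the factorization and the exactness of the induced functor.

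First I would verify that $\hat r^*$ lands in $\Pi_I-\cat{mod}$ and annihilates finite-dimensional modules. If $M$ is finitely generated over the noetherian algebra $\Pibullet_I$, then generators of $M$ map to generators of $M/(z_I-e_I)M$ over $\Pibullet_I/((z_I-e_I)\Pibullet_I)=\Pi_I$, so $\hat r^*(M)$ is a finitely generated $\Pi_I$-module. If moreover $M$ is finite-dimensional, then $M_k=0$ for all $k\gg 0$, so the directed system defining $\varinjlim_k M_k$ is eventually zero and $\hat r^*(M)=0$. Hence $\hat r^*$ vanishes on $\cat{tor}(\Pibullet_I)$.

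Next I would establish exactness of $\hat r^*$ on $\cat{gr}(\Pibullet_I)$, which is the only substantive point. Given a short exact sequence $0\to M'\to M\to M''\to 0$ in $\cat{gr}(\Pibullet_I)$, exactness is tested degreewise, so for each $k$ there is a short exact sequence of vector spaces $0\to M'_k\to M_k\to M''_k\to 0$; since the structure maps are $\Pibullet_I$-homomorphisms they commute with $(z_I\cdot)$, so these assemble into a short exact sequence of the directed systems indexed by $k$. As filtered colimits are exact in vector spaces (equivalently in $\Pi_I$-modules), applying $\varinjlim_k$ produces $0\to \hat r^*(M')\to \hat r^*(M)\to \hat r^*(M'')\to 0$. (Equivalently, one may observe that $(z_I-e_I)\colon M\to M$ is injective on any graded $M$, since a kernel element would force a nonzero element in the lowest nonvanishing degree to be annihilated; this gives $\Tor_1^{\Pibullet_I}(\Pi_I,M)=0$ and hence exactness of $M\mapsto M/(z_I-e_I)M$.)

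Finally, since $\hat r^*\colon \cat{gr}(\Pibullet_I)\to \Pi_I-\cat{mod}$ is an exact functor between abelian categories that vanishes on the Serre subcategory $\cat{tor}(\Pibullet_I)$, the universal property of the Serre quotient yields a unique functor $r^*\colon \Coh{\P^2_I}\to \Pi_I-\cat{mod}$ with $r^*\circ\pi=\hat r^*$, and this induced functor is again exact. This gives both the factorization through $\Coh{\P^2_I}$ and the exactness of $r^*$. The hard part is really only step (ii) — recognizing that the colimit presentation reduces exactness to the exactness of filtered colimits — together with the (routine but worth stating) care needed to conclude from the universal property that the descended functor is exact, and not merely additive.
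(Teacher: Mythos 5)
Your argument is correct. The paper itself gives no direct proof here: it simply asserts that the proof of \cite[Lemma 5.1.2]{BGK} goes through verbatim with $\Pibullet_I,\Pi_I$ in place of $A^\tau,\mathcal B^\tau$. What you have written is precisely the content that citation is standing in for: $\hat r^*$ annihilates $\cat{tor}(\Pibullet_I)$ because the directed system $(M_k,z_I\cdot)$ is eventually zero for a finite-dimensional $M$; $\hat r^*$ is exact on $\cat{gr}(\Pibullet_I)$ because graded short exact sequences are degreewise exact and filtered colimits of vector spaces are exact; and Gabriel's universal property of the Serre quotient then produces $r^*$ on $\cat{qgr}(\Pibullet_I)=\Coh{\P^2_I}$ and guarantees it is exact. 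So the route is the same in substance, just written out rather than outsourced.

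Two small points worth keeping in mind. In your parenthetical alternative, the injectivity of $(z_I-e_I)$ on a graded module uses that the module is bounded below; this holds since finitely generated graded modules over the non-negatively graded $\Pibullet_I$ are bounded below, and the same remark justifies the two-term free resolution $0\to\Pibullet_I\xrightarrow{(z_I-e_I)}\Pibullet_I\to\Pi_I\to 0$ computing $\Tor_1^{\Pibullet_I}(\Pi_I,M)$ (note $z_I-e_I$ is central in $\Pibullet_I$, since $z$ is central in $\Pibullet$). Second, your opening check that $\hat r^*(M)$ is finitely generated over $\Pi_I$ is the right thing to record if one wants the target to be finitely generated modules, as the notation $\Pi_I\cat{-mod}$ suggests. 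Neither point is a gap.
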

	\begin{proof}
		This is precisely the proof of \cite[Lemma 5.1.2]{BGK}, with $\Pibullet_I, \Pi_I$ in place of  $A^\tau$, respectively $ \mathcal{B}^\tau$ of \emph{loc.cit}.
	\end{proof}
	
	\begin{lemma}\label{lem:TrivAlongFraming}
		Let $\shf Q\in \Coh{\P^2_I}$, with $c^*\shf Q = 0$, and a surjective morphism $\OO_{\P^2_I}e_0\to \shf Q$.
		There is a $\Pi_I$-module $Q'$ such that $\shf Q = \pi(\bigoplus_{k\ge 0} z_I^k Q')$, and $\dim r^*\shf Q = \dim Q'$.
	\end{lemma}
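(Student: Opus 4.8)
The plan is to identify the module $Q'$ as the open restriction $r^*\shf Q$, build an explicit graded $\Pibullet_I$-module out of it, and compare that module against a representative of $\shf Q$; the whole argument reduces to showing that multiplication by $z_I$ is eventually an isomorphism on graded pieces. First I would set $Q'\coloneqq r^*\shf Q$ and record that it is a finitely generated $\Pi_I$-module: applying the exact functor $r^*$ (\cref{lem:OpenResProperties}) to the surjection $\OO_{\P^2_I}e_0\to\shf Q$, and using that $r^*(\OO_{\P^2_I}e_0)=\hat r^*(e_I\Pibullet e_0)=\varinjlim(e_I\Pibullet e_0)_k=e_I\Pi e_0$, yields a surjection $e_I\Pi e_0\twoheadrightarrow Q'$ of $\Pi_I$-modules.

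Next I would fix a graded $\Pibullet_I$-module $M$ representing $\shf Q$; the given surjection from $\OO_{\P^2_I}e_0$ lets me take $M$ to be a graded quotient of $e_I\Pibullet e_0$, so that $M$ is generated in degree $0$. With this model, for each $k$ the natural map $\phi_k\colon M_k\to\varinjlim_j M_j=Q'$ factors as $\phi_k=\phi_{k+1}\circ(z_I\cdot)$, so the images $Q'_k\coloneqq\im\phi_k\subseteq Q'$ form an increasing filtration; this filtration is exactly what is denoted $z_I^k Q'$ (the image of the degree-$k$ part). Setting $N\coloneqq\bigoplus_{k\ge0}Q'_k=\bigoplus_{k\ge0}z_I^k Q'$, with $z_I$ acting as the inclusions $Q'_k\hookrightarrow Q'_{k+1}$ and the remaining arrows acting through the $\Pi_I$-structure of $Q'$, produces a graded $\Pibullet_I$-module together with a graded surjection $M\twoheadrightarrow N$ whose kernel $L\coloneqq\bigoplus_k\ker\phi_k$ is precisely the $z_I$-power-torsion submodule of $M$.

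The heart of the argument is to show that $L$ is finite-dimensional, equivalently that $z_I\colon M_k\to M_{k+1}$ is an isomorphism for all $k\gg0$. Here I would invoke the hypothesis $c^*\shf Q=0$, which says exactly that $M/z_I M$ is finite-dimensional, so $z_I\colon M_k\to M_{k+1}$ is surjective for $k\gg0$. A dimension count on the exact sequences $0\to(\ker z_I)_{k-1}\to M_{k-1}\to M_k\to(M/z_I M)_k\to0$ gives $\dim M_k-\dim M_{k-1}=\dim(M/z_I M)_k-\dim(\ker z_I)_{k-1}$, which for $k\gg0$ equals $-\dim(\ker z_I)_{k-1}\le0$; hence $\dim M_k$ is eventually non-increasing, and being a nonnegative integer it is eventually constant, say for $k\ge k_0$. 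At that point $(\ker z_I)_{k}=0$, so $z_I$ is injective as well, hence an isomorphism, in all degrees $\ge k_0$. Consequently any $z_I$-torsion element lives in degree $<k_0$ (in high degree $z_I^N$ is a composite of isomorphisms, so injective), whence $L$ is finite-dimensional and $Q'=\varinjlim M_k=M_{k_0}$ is finite-dimensional.

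Finally I would conclude: since $L$ is finite-dimensional, $\pi(L)=0$, so the short exact sequence $0\to L\to M\to N\to0$ gives $\shf Q=\pi(M)\cong\pi(N)=\pi\bigl(\bigoplus_{k\ge0}z_I^k Q'\bigr)$. The dimension statement is then immediate, since $r^*\shf Q=\hat r^*N=\varinjlim Q'_k=Q'$ forces $\dim r^*\shf Q=\dim Q'$. I expect the main obstacle to be this eventual-isomorphism step: passing from the one-sided input $c^*\shf Q=0$ (surjectivity of $z_I$ in high degree) to genuine injectivity requires the monotonicity-of-dimensions argument above. One cannot shortcut it via \cref{lem:zTorFree}, because $\shf Q$ is precisely \emph{not} torsion-free here (by \cref{lem:TorFreeSheafRank0IsNil}, $c^*\shf Q=0$ would otherwise force $\shf Q=0$), so the $z_I$-torsion must be controlled by hand.
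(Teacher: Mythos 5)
Your proof is correct and follows essentially the same route as the paper's: both exploit that $c^*\shf Q=0$ forces $z_I\colon M_k\to M_{k+1}$ to be surjective in all large degrees and then truncate away low degrees in $\cat{qgr}(\Pibullet_I)$, taking $Q'$ to be the stabilised piece. The only real difference is that you make explicit, via the dimension-monotonicity count, the eventual \emph{injectivity} of $z_I$ on graded pieces --- which is genuinely needed for the equality $\dim r^*\shf Q=\dim Q'$ --- whereas the paper's much terser proof leaves this step implicit when it passes from $z_IQ_k=Q_{k+1}$ to setting $Q'=Q_0$ after truncation.
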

	\begin{proof}
		Choose a graded $\Pibullet$-module $Q = \bigoplus_{k\ge 0} Q_k$ such that $\pi(Q) = \shf Q$ and  $\psi\colon \Pibullet_I e_0\to Q$ is surjective. (This can always be done: if $\psi\colon\Pibullet_I e_0\to Q$ has a finite-dimensional cokernel, replace $Q$ by $\im \psi$. Then $\pi(\im \psi) =  \pi(Q) =\shf Q$.) 
		
		Now $c^*\shf Q = 0$ is equivalent to $\shf Q = z_I\shf Q$, which is equivalent to there being an $m>0$ such that $zQ_k = Q_{k+1}$ for all $k\ge m$. Then $\pi(Q) = \pi(\bigoplus_{k\ge m} Q_{k})$, and so we can assume that $z_I^kQ_0 = Q_k$, and we can take $Q' = Q_0$.
	\end{proof}
	\begin{remark}
		It need not be true that $r^*\shf Q = Q'$, because the actions of the elements of $\Pibullet_I$ not contained in $\{z_i\}_{i\in I}$ on $ \bigoplus_{k\ge m} Q_{k}$ are not necessarily zero.
	\end{remark}
	In  \cite[Corollary 6.7]{CrawYamagishi}, it is shown that there is an isomorphism $\mathfrak M_{\theta_I}(n\delta, \ol 1)\isoto\QuotInI([\C^2/\Gamma])$\todo{is this notation OK?}, where the latter space is the moduli space of quotients $e_I\Pi e_0\to Z\to 0$ of $\Pi_I$-modules $Z$ such that $\dim e_iZ = n_i$. We now show that the bijection on closed points induced by this isomorphism agrees with that shown in \cref{thm:main}.
	\begin{corollary}\label{cor:IsEquivQuotScheme} Let $V$ be a $\Pi_0$-module of dimension $n\delta$, $W$ a $\Pi_0$-module of dimension $\ol 1$.
		Then there is a canonical bijection $\Cohfr{\P^2_I}(V, W)\cong \QuotInI([\C^2/\Gamma])(\C)$.
	\end{corollary}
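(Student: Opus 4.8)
The plan is to obtain the bijection as a composite of two identifications already at our disposal: the bijection $\mu$ of \cref{thm:main}, which presents $\Cohfr{\P^2_I}(V, W)$ (with $V$ regarded through its restriction $V|_I$) as the $\C$-points of a Nakajima quiver variety, and the isomorphism of \cite[Corollary 6.7]{CrawYamagishi} presenting that same quiver variety as the equivariant Quot scheme. The only genuinely new work is to check that the numerical hypothesis of \cref{thm:main} is met for the dimension vector $n\delta$, and then to verify that the resulting composite is the geometrically natural map, i.e.\ that it really sends a framed torsion-free sheaf to the $\Pi_I$-module quotient labelling the corresponding Quot point; for the latter I would use the open restriction functor $r^*$ together with \cref{lem:TrivAlongFraming}.

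First I would check that $V' \coloneqq n\delta$ is \emph{sufficient} in the sense of \cref{def:SetsOfFramedSheaves}. The vector $\delta$ is the minimal imaginary root of the affine Dynkin diagram attached to $\Gamma$, so it lies in the kernel of the affine Cartan matrix; equivalently $2\delta_i = \sum_{j \text{ adjacent to } i}\delta_j$ at every vertex $i$. Scaling by $n$ gives $2\dim V'_i = \sum_j \dim V'_j$ for all $i$, so in particular the sufficiency inequality holds (in fact with equality) for every $i \notin I$. Since $V'|_I = (n\delta)|_I = V|_I$, \cref{thm:main} applies with this $V'$ and with $W$ of dimension $\ol 1$, yielding a canonical bijection $\Cohfr{\P^2_I}(V|_I, W) \isoto \mathfrak{M}_{\theta_I}(n\delta, \ol 1)(\C)$. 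Composing with the bijection on $\C$-points induced by the scheme isomorphism $\mathfrak{M}_{\theta_I}(n\delta, \ol 1) \isoto \QuotInI([\C^2/\Gamma])$ of \cite[Corollary 6.7]{CrawYamagishi} then produces a canonical bijection $\Cohfr{\P^2_I}(V, W) \cong \QuotInI([\C^2/\Gamma])(\C)$, as required.

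To make this bijection explicit and to confirm that it matches the Quot-scheme parametrisation, I would trace it through using $r^*$. Given $(\shf F, \phi_{\shf F}) \in \Cohfr{\P^2_I}(V, W)$ with $\dim W = \ol 1$, the framing trivialises $\shf F$ along $\P^1_I$, so (using that $\shf F$ is torsion-free of rank one, whence $\shf F\dual\dual \cong \OO_{\P^2_I}e_0$ as in the proof of \cref{lem:EquivalentTorFreeInvIm}) one obtains a canonical embedding $\shf F \hookrightarrow \OO_{\P^2_I}e_0$ whose cokernel $\shf Q$ satisfies $c^*\shf Q = 0$. The surjection $\OO_{\P^2_I}e_0 \twoheadrightarrow \shf Q$ then feeds into \cref{lem:TrivAlongFraming}, which, via the exact functor $r^*$ of \cref{lem:OpenResProperties}, produces a $\Pi_I$-module quotient $e_I\Pi e_0 \to Q'$ with $\dim e_i Q' = n\delta_i$, that is, exactly a closed point of $\QuotInI([\C^2/\Gamma])$.

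The main obstacle, and the part deserving real care, is to show that this $r^*$-construction \emph{coincides} with the composite of the two abstract bijections above, rather than merely landing in the correct target. Concretely I would verify that $r^*$ is compatible with the functors $\tau_*$ and $\tau^T$ (so that restricting $\shf F$ agrees with restricting $\tau^T\shf F$ on $\P^2_\Gamma$), and that under the dictionary of \cref{lem:VaVaBij} the $\Pi^{\mathbf w}$-module attached to $\shf F$ recovers, after passage to the affine chart, the very quotient $e_I\Pi e_0 \to Q'$ built above. This ultimately amounts to chasing the exact functors $\tau_*$, $\tau^T$ and $r^*$ through the defining short exact sequence $0 \to \shf F \to \OO_{\P^2_I}e_0 \to \shf Q \to 0$; it is bookkeeping once the compatibility of $r^*$ with cornering is in hand, the only genuinely subtle input being the identification of the dimension vector $\dim Q' = (n\delta_i)_{i\in I}$, which is guaranteed by the sufficiency of $n\delta$ established above.
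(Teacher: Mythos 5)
Your composite argument does prove the statement as literally written, and the sufficiency check is correct and is the one genuinely new input your route needs: $\delta$ spans the kernel of the affine Cartan matrix of the McKay graph, so $2(n\delta)_i=\sum_{j\sim i}(n\delta)_j$ at every vertex, hence $n\delta$ is sufficient for any $I$, and \cref{thm:main} with $V'=n\delta$ composes with \cite[Corollary~6.7]{CrawYamagishi} to give a canonical bijection. This is, however, a genuinely different route from the paper's. The paper does not pass through the quiver variety at all in this proof: it constructs the bijection directly at the level of modules, using the framing isomorphism together with $z_I$-torsion-freeness (\cref{lem:zTorFree}) to equip a representing module $M$ with a second, ``$z$-adic'' grading, out of which it builds an explicit injection $t\colon M\to e_I\Pibullet e_0$; the cokernel $\shf Q$ of the resulting map $\shf F\injto \OO_{\P^2_I}e_0$ is converted into a $\Pi_I$-module quotient of $e_I\Pi e_0$ via $r^*$ and \cref{lem:TrivAlongFraming}, the dimension count $\dim e_i Q=\dim e_i V$ coming from the long exact cohomology sequence and \cref{lem:cohomOfTwists}; the inverse direction is taking kernels of $\OO_{\P^2_I}e_0\to \pi(\bigoplus_k z_I^kQ')$. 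What the direct construction buys is exactly what the sentence preceding the corollary promises: the bijection is exhibited as the geometrically natural one, framed sheaf $\mapsto$ cokernel quotient, rather than as an abstract composite.

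Your third and fourth paragraphs aim to recover this identification, but two steps there are asserted rather than proved. First, the claim $\shf F\dual\dual\cong\OO_{\P^2_I}e_0$ is established nowhere in the paper: \cref{lem:EquivalentTorFreeInvIm} only identifies the kernel of $\tau^*\shf F\to(\tau^*\shf F)\dual\dual$ on $\P^2_\Gamma$ and says nothing about what the double dual \emph{is}; pinning down rank-one locally free sheaves on $\P^2_I$ trivialised along $\P^1_I$ would require a separate argument, which the paper sidesteps precisely by constructing the embedding $t$ by hand from the framing and the $z$-grading. Second, the compatibility of the resulting quotient with the $\Pi^{\mathbf w}$-module attached to $\shf F$ under \cref{lem:VaVaBij} and with the isomorphism of \cite{CrawYamagishi} is the real substance of the agreement claim and is not ``bookkeeping''; it is left uncarried out (and, as a small point, the identity $\dim e_iQ'=n\delta_i$ follows from the cohomology sequence, not from sufficiency). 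If the corollary is read as asserting only the existence of a canonical bijection, your first two paragraphs suffice and the rest can be dropped; if it is read as identifying that bijection with the explicit Quot-scheme parametrisation, your proof is incomplete at these two points.
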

	\begin{proof}
		
		Let $\shf F\in \Cohfr{\P^2_I}(V, W)$. There is thus an isomorphism $\phi_{\shf F}\colon c^*\shf F\isoto \pi(e_I\Pi e_0)$, and we have $\dim e_iH^1(\P^2_{I}, \shf F(-1)) = n\dim \rho_i$. Let $M = \bigoplus_{k\ge 0}M_k$ be a graded $\Pibullet_I$-module such that $\pi(M) = \shf F$. Note that $\shf F$ is also $z_I$-torsion free by \cref{lem:zTorFree}.
		On the level of modules, the isomorphism $\phi_{\shf F}$ combined with the $z_I$-torsion-freeness 
		means that there is a $k'>0$ such that \begin{enumerate}
			\item $\phi_{\shf F}$ induces an isomorphism $\iota \colon\bigoplus_{k\ge k'} M_{k+1}/z_IM_{k}\isoto \bigoplus_{k\ge k'} (e_I\Pi e_0)_k$
			\item $z_I\colon M_k\to M_{k+1}$ is injective for all $k\ge k'$.
		\end{enumerate}
		Because $\bigoplus_{k=0}^{k'}M_k$ is finite-dimensional, $\pi(M) = \pi(\bigoplus_{k\ge k'}M_k)$. Thus we can replace $M$ by $\bigoplus_{k\ge k'}M_k$.
		
		We then use the injective endomorphism $(z_I\cdot)\colon M\to M$ to construct a \emph{different} grading on $M$, which we will call the $z$-grading. For this grading, set $M_{0_z} \coloneqq M\setminus z_IM$, $M_{k_z} \coloneqq \{0\}\cup \left(z_I^kM\setminus z_I^{k+1}M\right)$. Then $M = \bigoplus_{k\ge 0} M_{k_z}$. It follows that, as $\Pi$-modules, we have \[M_{0_z}= \bigoplus_{k\ge 0} M_{k+1}/z_IM_{k}\overset{\iota}{\isoto} \bigoplus_{k\ge 0} \left((e_I\Pi e_0)(k')\right)_k.\]
		
		We can then construct an injective morphism $\shf F\to \pi( e_I\Pibullet e_0)$ by building a module homomorphism \[t\colon M\to e_I\Pibullet e_0\] as follows: 
		Let $m\in M$. If $m= 0$ set $t(m) = 0$. Otherwise, it is enough to define $t$ for elements homogeneous with respect to the $z$-grading. So take $m\in M_{k_z}$ for some $k$. Thus we can write $m = z_I^{k}m'$ with $m'\in M_{0_z}$.  We then set $t(m) = z_I^k\iota(m')$, viewing $e_I\Pibullet e_0 $ as $ \bigoplus_{k\ge 0} \left(z_I^ke_I\Pi e_0\right) $. It is easily seen that $t$ is a morphism of $\Pibullet_I$-modules. 
		
		Finally, we must show that $t$ is injective, it is again enough to check for $z$-homogeneous elements. So let $m\in M_{k_z}$, and suppose that $t(m) = t(z_I^km') = 0$. Then $0 =t(z_I^k)m' = z_I^kt(m') = z_I^k\iota(m')$, so since $e_I\Pibullet e_0$ is $z$-torsion-free, $\iota(m') = 0$. But $\iota$ is injective, so $m'=0$, and $t$ is injective.
		
		We thus have an injective sheaf morphism $\shf F\injto \OO_{\P^2_{I}} e_0$, and we can make a short exact sequence
		\begin{equation}\label{eq:OpenResSeq} 0\to \shf F \to \OO_{\P^2_{I}} e_0\to \shf Q\to 0.\end{equation} Restricting to $\P^1_{I}$, we find $c^*\shf Q = 0$. Thus $\shf Q = z_I\shf Q$.
		
		Let $Q = r^*\shf Q$, by \cref{lem:TrivAlongFraming} we can assume that $\shf Q = \pi(\bigoplus_{k\ge 0} z_I^k Q')$ with $\dim Q = \dim Q'$.
		
		Applying $r^*$ to \eqref{eq:OpenResSeq}, we find that $Q$ is a quotient of $r^*\OO_{\P^2_{I}}e_0 = \hat{r}^*(e_I\Pibullet e_0) = e_I\Pi e_0$. Now twist \eqref{eq:OpenResSeq} by $(-1)$ and take the following piece of the long exact cohomology sequence:
		\[H^0(\P^2_I, \OO_{\P^2_{I}}e_0(-1))\to H^0(\P^2_I, \shf Q(-1))\to H^1(\P^2_I, \shf F(-1))\to H^1(\P^2_I, \OO_{\P^2_{I}}e_0(-1)).\] Since $H^0(\P^2_I, \OO_{\P^2_{I}}e_0(-1))\subset H^0(\P^2_I,\OO_{\P^2_{I}}(-1)) = 0$ and $H^1(\P^2_I, \OO_{\P^2_{I}}e_0(-1))\subset H^1(\P^2_I,\OO_{\P^2_{I}}(-1)) = 0$, we have 
		\[H^0(\P^2_I, \shf Q(-1)) = H^1(\P^2_I, \shf F(-1)) = V.\] But now we can apply that 
		\[H^0(\P^2_I, \shf Q(-1)) = H^0(\P^2_I, \shf Q) = \Hom_{\Pibullet_I}(\bigoplus z_I^k \Pi_I, \bigoplus z_I^k Q') = \Hom_{\Pi_I}(\Pi_I, Q')\] and so we find that $\dim V_i = \dim e_iV = \dim e_i\Hom_{\Pi_I}(\Pi_I, Q') = \dim e_iQ' = \dim e_iQ$.
		This proves one direction of the required bijection.
		
		For the other, let $Q\in \QuotInI([\C^2/\Gamma])$. Thus $Q$ is (up to isomorphism) a $\Pi_I$-module such that $Q$ is a quotient of $e_I\Pi e_0$, and $\dim e_iQ = n\delta_i$. Let $Q = \pi(\bigoplus_{k\ge 0} z_I^k Q')$, where $\bigoplus_{k\ge 0} z_I^k Q'$ has the induced $\Pibullet_I$-structure. By construction, we have a surjection $\OO_{\P^2_I}e_0\to Q$, let $\shf F$ be the kernel of this morphism. Using the same argument as in the previous paragraph in reverse, we then find that $\dim_i H^1(\P^2_I, \shf F) = \dim e_i Q'$. This concludes the proof.
	\end{proof}
	\subsection{Framed sheaves on a compactification}
	There is (\cite{Paper1}) a projective two-dimensional Deligne-Mumford stack $\mathcal{X}$ with a stacky divisor $d_\infty$, containing $\C^2/\Gamma $ as an open substack, we repeat the definition in \cref{def:Stack}.

	\begin{definition}\label{def:FramedSheavesOnStack}
		A \emph{framed sheaf of rank $r$} on $\mathcal X$ is a pair $(\shf F, \phi_{\shf F})$ of  a torsion-free coherent sheaf of rank $r$ $\shf F$, and an isomorphism $\phi_{\shf F}\colon \shf F|_{d_\infty}\isoto \OO_{d_\infty}^{\oplus r}$.
		
		A morphism $f\colon (\shf F, \phi_{\shf F})\to (\shf G, \phi_{\shf G})$ of framed sheaves of rank $r$ consists of a morphism of sheaves $f\colon \shf F\to \shf G$ such that $\phi_{\shf G}\circ f = \phi_{\shf F}$.
		
		We define a framed sheaf on $\P^2/\Gamma$ by changing $\mathcal X$ to $\P^2/\Gamma$ in the definition above, and changing $d_\infty$ for $r_\infty\coloneqq \{z=0\}/\Gamma$.
	\end{definition}
	We shall often suppress $\phi_{\shf F}$ from notation, and simply write $\shf F$ for $(\shf F, \phi_\shf F)$. 
	
	Now set $\mathbf w = (r, 0,\dots, 0)$.
	The main result of \cite{Paper1} is that $\mathfrak{M}_{\theta_{\{0\}}}(n\delta, \mathbf w)$ parametrises the set $X_{r, n}$ of isomorphism classes of framed torsion-free sheaves $\shf F$ of rank $r$ on $\mathcal{X}$, such that $\dim H^1(\mathcal X, \shf F\otimes \shf I_{d_\infty})\cong n\delta$, where $\shf I_{d_\infty}$ is the ideal sheaf of $d_\infty$.
	
	We will show in Appendix A that this stack is actually unnecessary, and it is enough to use its coarse moduli scheme $\P^2/\Gamma$ in place of $\mathcal X$, and $r_\infty$ in place of ${d_\infty}$. We now show that our description from \cref{thm:main} also generalises this.
	\begin{corollary}\label{cor:IsFramedSheavesOnStack}
		The set $\Cohfr{\P^2_{\{0\}}}(n\delta, \mathbf w)(\C)$ has a canonical bijection to the set $X_{r,n}$.
	\end{corollary}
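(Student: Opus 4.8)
The plan is to obtain this as a formal consequence of \cref{thm:main} together with the main theorem of \cite{Paper1}, by chaining two canonical bijections through a single Nakajima quiver variety. The point is that neither the sheaf-theoretic description on $\P^2_0$ nor the one on $\mathcal X$ needs to be compared directly; both are identified with the \emph{same} quiver variety $\mathfrak M_{\theta_{\{0\}}}(n\delta, \mathbf w)$, and composing the identifications gives the bijection.

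First I would specialise \cref{thm:main} to $I = \{0\}$. Its input is a $\Pi_{\{0\},0}$-module $V$, here the one-dimensional-at-vertex-$0$ module of dimension $n$ recording $H^1(\P^2_0, \shf F(-1))$, together with a sufficient $\Pibullet_0$-module $V'$ satisfying $V'|_{\{0\}} = V$. The canonical choice is $V' = n\delta$: its component at vertex $0$ is $\C^{n\delta_0} = \C^n$, so it restricts to $V$, and it is sufficient because $\delta$ is the null root of the affine Dynkin graph, so $2\delta_i = \sum_{j \sim i}\delta_j$ for every $i$; in particular $2\dim V'_i \ge \sum_{j \sim i}\dim V'_j$ holds (with equality) for all $i \ne 0$. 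Since $\mathbf w = (r,0,\dots,0)$ indeed has $W_i = 0$ for $i \notin \{0\}$ as \cref{thm:main} requires, this yields a canonical bijection
\[\Cohfr{\P^2_0}(n\delta, \mathbf w)(\C) \isoto \mathfrak M_{\theta_{\{0\}}}(n\delta, \mathbf w)(\C).\]
By \cref{lem:Sufficiency} the particular choice of sufficient $V'$ is immaterial, but taking $V' = n\delta$ is exactly what makes the target coincide with the quiver variety appearing in \cite{Paper1}.

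Second, I would invoke \cite[Theorem 1.2]{Paper1}, recalled in the paragraph preceding the statement, which asserts that $\mathfrak M_{\theta_{\{0\}}}(n\delta, \mathbf w)$ parametrises precisely the set $X_{r,n}$ of framed rank-$r$ torsion-free sheaves $\shf F$ on $\mathcal X$ with $\dim H^1(\mathcal X, \shf F \otimes \shf I_{d_\infty}) = n\delta$. This provides a canonical bijection $\mathfrak M_{\theta_{\{0\}}}(n\delta, \mathbf w)(\C) \isoto X_{r,n}$, and composing it with the bijection of the previous paragraph establishes the corollary. The substance of the argument lies entirely in the two cited results, so the only work to carry out by hand is checking that their hypotheses are aligned — that both sides use the stability parameter $\theta_{\{0\}}$, the framing vector $\mathbf w$, and the dimension vector $n\delta$, and that $n\delta$ is sufficient so that \cref{thm:main} applies. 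I expect this bookkeeping to be the main (and essentially only) obstacle; notably, there is no need to reconcile the two \emph{a priori} different cohomological normalisations, $H^1(\shf F(-1))$ on $\P^2_0$ versus $H^1(\shf F \otimes \shf I_{d_\infty})$ on $\mathcal X$, since each is already absorbed into the common quiver-side dimension vector $n\delta$.
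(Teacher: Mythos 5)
Your argument is correct as a proof of the literal statement, but it takes a genuinely different route from the paper. You produce the bijection formally, by composing the bijection of \cref{thm:main} for $I=\{0\}$ (with $V'=n\delta$, which is indeed sufficient since $\delta$ is the null root of the affine Dynkin graph, so $2\delta_i=\sum_{j\sim i}\delta_j$) with the bijection $\mathfrak M_{\theta_{\{0\}}}(n\delta,\mathbf w)(\C)\cong X_{r,n}$ from \cite{Paper1}; the only work is the bookkeeping you describe. The paper instead identifies the two moduli problems directly at the level of sheaves: \cref{prop:MainAppA} (the content of Appendix A) replaces framed sheaves on the stack $\mathcal X$ by framed sheaves on the scheme $\P^2/\Gamma$, and \cref{prop:CorneredTo0} gives $\Pibullet_{\{0\}}=\C[x,y,z]^\Gamma$, so that $\P^2_{\{0\}}=\Proj\C[x,y,z]^\Gamma=\P^2/\Gamma$ and an object of $\Cohfr{\P^2_{0}}(n\delta,\mathbf w)$ simply \emph{is} a framed torsion-free sheaf on $\P^2/\Gamma$ once the framing is translated through \cref{lem:EquivCategories} and one checks that torsion-freeness and cohomology agree. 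Your approach buys economy and reuses the two main theorems as black boxes; the paper's approach buys a geometric identification that never passes through the quiver variety, which is closer to the point of \cref{sec:connections}: the claim being illustrated is that the moduli description of \cref{thm:main} \emph{recovers} that of \cite{Paper1} as the same moduli problem, not merely that both sets are parametrised by the same variety $\mathfrak M_{\theta_{\{0\}}}(n\delta,\mathbf w)$. In particular, your composite bijection is canonical but leaves open whether it coincides with the direct sheaf-theoretic correspondence; the paper's proof constructs the latter outright, and in doing so actually needs Appendix A, which your argument bypasses entirely.
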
	
	\begin{proof}
		Assume \cref{prop:MainAppA}. We thus wish to show that a $\shf F\in \Cohfr{\P^2_{\{0\}}}(n\delta, \mathbf w)$ corresponds to an isomorphism class of framed sheaves on the (commutative!) scheme $\P^2/\Gamma$. By \cref{prop:CorneredTo0}, we thus have $\P^2_{\{0\}}= \P^2/\Gamma$, and so $\shf F$ can be identified with a coherent sheaf on $\P^2/\Gamma$. The framing isomorphism $\phi_{\shf F}\colon c^*\shf F\isoto \pi((\Pi e_0)^{\oplus r}) $ induces, by \cref{lem:EquivCategories}, an isomorphism of $\shf F|_{r_\infty}$ with  $\OO_{r_\infty}^{\oplus r}$.
		
		As the notions of torsion-free sheaf and sheaf cohomology on $\P^2_{\{0\}}$ and $ \P^2/\Gamma$ agree, the claim follows.
	\end{proof}

	\appendix
	\section{A comparison with a projective stack compactifying $\C^2/\Gamma$}\label{sec:Appendix}
	
	We start by giving an overview of some constructions from \cite{Paper1}.
	Consider the action of $\Gamma$ on $\P^2= \Proj \C[x,y,z]$, and note that it is free on the locus $\P^2\setminus (\{o\}\cup L)$, where $L = \{z=0\}, o= (0:0:1)$. Thus the quotient stack $[(\P^2\setminus (\{o\}\cup L))/\Gamma]$ is represented by the scheme $(\P^2\setminus (\{o\}\cup L))/\Gamma$.
	
	\begin{definition}[{\cite[Definition 3.1]{Paper1}}]\label{def:Stack}
		We set \[\mathcal{X} = [\P^2\setminus\{o\}/\Gamma]\cup_{[(\P^2\setminus(L\cup \{o\}))/\Gamma]} \C^2/\Gamma.\]
	\end{definition}
	
	Then we can show (see \emph{ibid.}) that $\mathcal{X}$ is a projective Deligne-Mumford stack, with a unique singular point, containing $\C^2/\Gamma$ as an open substack, the complement of which is a closed substack $d_\infty $ isomorphic to $[L/\Gamma]$. The coarse moduli space of $\mathcal X$ is $\P^2/\Gamma$, and we set $r_\infty$ to be the scheme-theoretic image of $d_\infty$ in $\P^2/\Gamma$.
	We can build the following commutative diagram of stacks and morphisms:
	\[
	\begin{tikzcd}
		L \arrow[d, "\pi|_L", shift right] \arrow[r, "i"] \arrow[dd, "q|_L"', bend right, shift right] & \P^2 \arrow[d, "\pi"] \arrow[dd, "q", bend left] \\
		{d_\infty} \arrow[d, "k|_{d_\infty]}"] \arrow[r]                                                               & \mathcal X \arrow[d, "k"]                             \\
		r_\infty \arrow[r, "j"]                                                                        & \P^2/\Gamma                                     
	\end{tikzcd}
	\]
	
	There are functors $D\colon \ECoh{\Gamma}{\P^2}\to \Coh{\mathcal X},\quad  \pi^T\colon \Coh{\mathcal X}\to \ECoh{\Gamma}{\P^2}$. We also have that $q_*(-)^\Gamma = k_*\circ D\colon \ECoh{\Gamma}{\P^2}\to \Coh{\P^2/\Gamma}$. Here $\ECoh{\Gamma}{\P^2}$ is the category of $\Gamma$-equivariant coherent sheaves on $\P^2$.
	
	We will need the following result of Mumford.
	\begin{proposition}{{\cite[p. 111]{Mumford}}}\label{thm:descentWhenScheme}
		Let $ \Gamma $ be a finite group acting on a scheme $ X $, such that the orbit of every closed point is contained in an open affine subscheme. 
		Then 
			there is a finite $ \Gamma $-invariant morphism $q\colon X\to X/\Gamma $ which is a categorical quotient, i.e. it has the universal property that any $ \Gamma $-invariant morphism $ X\to Y $ of schemes factors uniquely through $ X/\Gamma $.
			The structure sheaf of $ X/\Gamma $ satisfies \[ \OO_{X/\Gamma}(U)=(\OO_{X}(U))^\Gamma \] for any open $ U\subset X/\Gamma $, i.e. $q_*(-)^\Gamma$ takes $\OO_X$ to $\OO_{X/\Gamma}$.
			
	\end{proposition}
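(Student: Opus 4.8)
The plan is to reduce to the affine situation and then glue, with the orbit hypothesis furnishing a $\Gamma$-invariant affine open cover of $X$.

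First I would settle the affine case. If $X = \Spec A$, set $X/\Gamma \coloneqq \Spec A^\Gamma$ and let $q$ be the morphism induced by the inclusion $A^\Gamma \hookrightarrow A$. Every $a \in A$ is a root of the monic polynomial $\prod_{g \in \Gamma}(t - g\cdot a)$, whose coefficients are symmetric in the $g \cdot a$ and hence lie in $A^\Gamma$; thus $A$ is integral over $A^\Gamma$, and module-finite whenever $A$ is of finite type over the base (which holds in all our applications, where everything is of finite type over $\C$), so that $q$ is finite. The categorical-quotient property is then immediate: a $\Gamma$-invariant morphism $\Spec A \to \Spec B$ corresponds to a $\Gamma$-invariant ring map $B \to A$, whose image lands in $A^\Gamma$ and so factors uniquely through $A^\Gamma$; the case of a general target is handled by covering it with affines. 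Finally, for $f \in A^\Gamma$ one has $(A_f)^\Gamma = (A^\Gamma)_f$, since localising at an invariant element commutes with taking $\Gamma$-invariants, and this gives the structure-sheaf identity $\OO_{X/\Gamma}(U) = (\OO_X(q^{-1}U))^\Gamma$ on basic opens, hence everywhere by the sheaf property.

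The crux is the passage to a $\Gamma$-invariant affine cover, and this is where I expect the real work — and the only essential use of the orbit hypothesis — to lie. Given a closed point $x$, its orbit $O = \Gamma \cdot x = \{x_1, \dots, x_n\}$ is contained, by hypothesis, in some affine open $U = \Spec A$; then $V \coloneqq \bigcap_{g \in \Gamma} gU$ is a $\Gamma$-invariant open still containing $O$. Because $O$ is finite and disjoint from the closed set $U \setminus V$, prime avoidance produces $f \in A$ with $O \subseteq D(f) \subseteq V$, where $D(f)$ is the principal (affine) open. Setting $h \coloneqq \prod_{g \in \Gamma} g\cdot f \in \OO_V(V)$, regarded as a product of the restrictions to $V \subseteq gU$, yields a $\Gamma$-invariant function whose non-vanishing locus is a $\Gamma$-invariant open containing $O$, since $h(x_i) = \prod_g f(g^{-1}x_i) \neq 0$ because each $g^{-1}x_i$ again lies in $O$. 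As $f$ is one of the factors of $h$, this locus is contained in the affine $D(f)$ and equals a basic open of it, hence is itself affine. Thus every orbit has a $\Gamma$-invariant affine open neighbourhood, and these cover $X$.

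It then remains to glue. Applying the affine construction to each member $U_i = \Spec A_i$ of the invariant cover gives quotients $\Spec A_i^\Gamma$; on the (again $\Gamma$-invariant) overlaps $U_i \cap U_j$ the affine quotient is canonically compatible, so these patch to a scheme $X/\Gamma$ equipped with a finite morphism $q$. The universal property globalises by uniqueness: a $\Gamma$-invariant morphism $X \to Y$ restricts over each $U_i$, factors through $\Spec A_i^\Gamma$ by the affine case, and the factorisations agree on overlaps and hence glue. The identity $\OO_{X/\Gamma} = (q_*\OO_X)^\Gamma$ holds on each affine piece and, both sides being sheaves, globally. The principal obstacle throughout is the construction in the preceding paragraph: without $\Gamma$-invariant affine neighbourhoods of orbits the affine-local quotient cannot be glued $\Gamma$-equivariantly, and it is precisely the orbit hypothesis that makes this extraction possible.
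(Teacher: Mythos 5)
Your argument is correct and is essentially the classical proof from the cited source (Mumford, cf.\ SGA1, Exp.\ V): the paper itself gives no proof beyond the citation, and you correctly reconstruct the two key steps — the affine case via $\Spec A^\Gamma$ with integrality and $(A_f)^\Gamma=(A^\Gamma)_f$, and the extraction of $\Gamma$-invariant affine orbit neighbourhoods via the norm $h=\prod_{g\in\Gamma}g\cdot f$. The only compressed point is the compatibility of the affine quotients on overlaps, but this follows by covering the (saturated, invariant) overlaps with invariant basic opens produced by the same norm trick, and is routinely omitted.
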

	
	The goal of this Appendix is to prove the following statement.
	\begin{proposition}\label{prop:MainAppA}
		Let $\Cohfr{\mathcal{X}}(V, W)$ be the set of isomorphism classes of $W$-framed sheaves $(\shf F, \phi_{\shf F})$ on $\mathcal{X}$ with $\phi_{\shf F}\colon \shf F|_{d_\infty}\isoto W\otimes \OO_{d_{\infty}}$ and $H^1(\mathcal X, \shf F\otimes \shf I_{d_\infty}) \cong V$. There is then a canonical bijection of sets $\Cohfr{\mathcal{X}}(V, W)\cong \Cohfr{\P^2}(V, W)$.
	\end{proposition}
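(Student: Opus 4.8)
The plan is to realise the bijection via the adjoint functors $D\colon\ECoh{\Gamma}{\P^2}\to\Coh{\mathcal X}$ and $\pi^T\colon\Coh{\mathcal X}\to\ECoh{\Gamma}{\P^2}$ recalled above, using the equivalence $\ECoh{\Gamma}{\P^2}\isoto\Coh{\P^2_\Gamma}$ of \cref{lem:EquivCategories} to view $\Cohfr{\P^2}(V,W)$ as living on the equivariant side. Concretely, I would show that $\shf E\mapsto D\shf E$ and $\shf F\mapsto\pi^T\shf F$ restrict to mutually inverse maps between the two sets of isomorphism classes. The whole argument is local in nature around the point $o=(0:0:1)$, since $\mathcal X$ and $[\P^2/\Gamma]$ are canonically identified over the open substack $[\P^2\setminus\{o\}/\Gamma]$, and in particular over a neighbourhood of the divisor.

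First I would check that both functors send framed sheaves to framed sheaves. The divisor $d_\infty\cong[L/\Gamma]$ is the image of the $\Gamma$-invariant line $L=\{z=0\}$, which lies in the locus where $\pi$ is simply the quotient map $L\to[L/\Gamma]$; hence both $D$ and $\pi^T$ commute with restriction to the divisor, with no interference from the singular point. Thus a framing $\shf F|_{d_\infty}\isoto W\otimes\OO_{d_\infty}$ transports to a $\Gamma$-equivariant framing $(\pi^T\shf F)|_L\isoto W\otimes\OO_L$, and conversely for $D$; morphisms of framed sheaves are respected by functoriality of $D,\pi^T$. I would likewise record that $\shf I_{d_\infty}$ corresponds to $\OO(-1)$ (as $L$ is a degree-one divisor), so that the twist $\otimes\shf I_{d_\infty}$ matches the twist $(-1)$ appearing in the definition of $\Cohfr{\P^2}(V,W)$.

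Next I would verify preservation of torsion-freeness, rank, and the cohomological constraint. For torsion-freeness and rank I would invoke the corresponding statements from \cite{Paper1} (this is exactly the role of the torsion-free pullback $\pi^T$, mirroring the functor $\tau^T$ in \cref{subsec:tfinvimg}). For cohomology, the key identity is $H^1(\mathcal X,\shf F\otimes\shf I_{d_\infty})\cong H^1(\P^2,(\pi^T\shf F)(-1))$ as $\Gamma$-representations, the $\Gamma$-structure on the left being the one used to define $V$ in \cite{Paper1}; together with the analogous identity for $D$, this shows that the condition $H^1\cong V$ is preserved in both directions. Finally I would establish that the two maps are mutually inverse by exhibiting natural isomorphisms $\pi^T D\shf E\isoto\shf E$ and $D\pi^T\shf F\isoto\shf F$ on torsion-free framed sheaves, compatible with the framings; away from $o$ these are the identity, and at $o$ they follow from the reflexive normalisation built into $D$ and $\pi^T$.

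I expect the main obstacle to be precisely the behaviour at the point $o$, where $\mathcal X$ (which looks like the coarse singularity $\C^2/\Gamma$) and $[\P^2/\Gamma]$ (which has a $B\Gamma$ point) genuinely differ. Controlling the round-trips $\pi^T D\cong\id$ and $D\pi^T\cong\id$, and matching the $\Gamma$-representation on $H^1$, both reduce to understanding how torsion-free sheaves extend across this codimension-two point; here I would lean on the reflexivity arguments of \cite{Paper1} (the analogue of \cite[Lemma 4.4]{Paper1} already used in the proof of \cref{lem:EquivalentTorFreeInvIm}) to ensure that no information is lost or spuriously added at $o$.
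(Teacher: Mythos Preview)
Your proposal aims at the wrong target. From the context (the opening of the appendix, \cref{def:FramedSheavesOnStack}, and the use in \cref{cor:IsFramedSheavesOnStack}), the symbol $\Cohfr{\P^2}(V,W)$ in this statement stands for framed sheaves on the \emph{coarse quotient scheme} $\P^2/\Gamma$, framed along $r_\infty$---not for $\Gamma$-equivariant framed sheaves on $\P^2$ (i.e.\ not for $\Cohfr{\P^2_\Gamma}$). The whole purpose of the appendix is to show that the stack $\mathcal X$ may be replaced by its coarse moduli space.

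Accordingly, the paper does not use $D$ and $\pi^T$ at all; it uses the coarse moduli morphism $k\colon\mathcal X\to\P^2/\Gamma$. The identities $H^1(\mathcal X,-)=H^1(\P^2/\Gamma,k_*(-))$ and $k_*k^T=\id$ come for free from coarse-space theory, so the only substantive step is to show that for a framed sheaf $(\shf F,\phi)$ on $\mathcal X$, the pushforward $k_*\shf F$ is locally free near $r_\infty$ and inherits a framing $k_*\shf F|_{r_\infty}\isoto\OO_{r_\infty}^m$. This is carried out on the open locus $[(\P^2\setminus\{o\})/\Gamma]$: one views $\shf F$ there as a $\Gamma$-equivariant sheaf on $\P^2\setminus\{o\}$, produces $\Gamma$-invariant affine opens on which the framing along $L$ forces $\shf F$ to be \emph{equivariantly} trivial, and then invokes Mumford's descent (\cref{thm:descentWhenScheme}) to trivialise $(q_*\shf F)^\Gamma$ on the quotient. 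Note that the genuine difficulty is not at $o$---there $\mathcal X$ and $\P^2/\Gamma$ agree, both being the scheme $\C^2/\Gamma$---but along the divisor at infinity, where $\mathcal X$ is stacky and $\P^2/\Gamma$ is not; this is the opposite of where you located the obstacle. Your bijection via $D,\pi^T$ would instead compare $\mathcal X$ with the \emph{more} stacky $[\P^2/\Gamma]$, which is essentially the content of \cite{Paper1} already and does not yield the statement needed here.
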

	\begin{proof}
		Let $\shf F\in \Cohfr{\mathcal{X}}(V, W) $. Because $\P^2/\Gamma$ is the coarse moduli space of $\mathcal X$, it holds that $H^1(\mathcal{X}, \shf F\otimes \shf I_{d_\infty}) = H^1(\P^2/\Gamma,\shf F\otimes \shf I_{r_\infty})$, and that $k_*k^T$ is the identity on $\P^2/\Gamma$. 
		
		So the only thing we need to show is the following: Let $(\shf F, \phi)$ be a framed sheaf on $\mathcal X$ of rank $m$.
		Then $\phi$ induces a framing morphism on $k_*\shf F$, i.e. an isomorphism $k_*\shf F|_{r_\infty}\isoto \OO^{\oplus m}$.
		
		In order to do this, we will first show that $k_*\shf F$ is locally free in some neighbourhood of $r_\infty$.
		Since we only care about neighbourhoods of the framing divisors $d_\infty\subset \mathcal{X}, r_\infty\subset \P^2/\Gamma$, we can without loss of generality restrict to $[(\P^2\setminus \{o\})/\Gamma]\subset \mathcal X$. So then we can take $\shf F$ to be a $\Gamma$-equivariant framed sheaf on $\P^2\setminus \{o\}$, i.e. $\shf F =(\shf F, \phi_{\shf F})$, where $\shf F$ is a coherent $\Gamma$-equivariant torsion-free sheaf on $\P^2$ of rank $m$, and $\phi_{\shf F}$ is an equivariant isomorphism $\shf F|_{L}\isoto \OO_L^{\oplus m}$.
		
		So the diagram we care about is (with a slight abuse of notation)
		
		\begin{equation}\label{diag}
			{ \begin{tikzcd}
					L \arrow[r, "i"] \arrow[d, "q|_L"'] & \P^2\setminus \{o\} \arrow[d, "q"] \\ 	
					L/\Gamma \arrow[r, "j"]                                                                        &(\P^2\setminus \{o\})/\Gamma                                     
			\end{tikzcd} }
			.
		\end{equation}
		
		Let us start with showing that $q_*(\shf F)^\Gamma$ is locally free on some neighbourhood of $L/\Gamma$.
		
		It is shown in \cite[lemma 4.11]{Paper1} that $\shf F$ is locally free in a neighbourhood $U$ of $L$. First, let us replace $U$ by its intersection with all its $\Gamma$-translates, this is simply done to make $U$ $\Gamma$-invariant.
		Let $p$ be a point in $L$. Then we can find an open $V_p$ lying inside $U$, and containing the $\Gamma$-orbit of $p$, such that $V_p$ trivialises $\shf F$.
		(For instance, note that $U$ must be all of $\P^2$ except some finite collection of points. Choose a curve $C$ passing through all these points with $p\not\in C$, and set $V_p$ to be $U\setminus C$, then $V_p$ is affine, and thus must trivialise $\shf F$.) Then we can set $W_p$ to be the intersection of $V_p$ with all its $\Gamma$-translates, especially the $\Gamma$-orbit of $p$ lies in $W_p$. Now $W_p$ trivialises $\shf F$ as a locally free sheaf, but we must show it trivialises $\shf F$ as an \emph{equivariant} locally free sheaf. To do this, note that because $W_p$ is $\Gamma$-invariant and connected, and $\shf F$ is $\Gamma$-equivariant, we must have a $\Gamma$-equivariant isomorphism $ \shf F|_{W_p}\isoto \bigoplus \rho_i\otimes \OO_{W_p}^{m_i} $ for some integers $m_i$ with $\sum (m_i\dim \rho_i) = m$. But restricting to $W_p\cap L$, this isomorphism becomes 
		\[ \phi\colon \OO^m_{W_p\cup L}\isoto \shf F|_{W_p\cap L}\isoto \bigoplus \rho_i\otimes \OO_{W_p\cap L}^{m_i} ,\] thus $m_0=m$, and all the other $m_i$s vanish.
		
		Construct such a $W_p$ for every $p$ in $L$, and set $W \coloneqq \bigcup_{p\in L}W_p$ . (By noetherianness, it is enough to use a finite number of the $W_p$, but we won't need that.) Then $W$ is an open neighbourhood of $L$ on which $\shf F$ is \emph{equivariantly} locally free.
		
		Then $W/\Gamma$ is an open neighbourhood of $r_\infty$, and it is covered by the $W_p/\Gamma$. Now, because we have an isomorphism $t\colon \shf F|_{W_p}\isoto \OO_{W_p}^m$, it follows by \cref{thm:descentWhenScheme} that there is an isomorphism \[t^\Gamma\colon q_*(\shf F)^\Gamma|_{W_p/\Gamma} \isoto \OO_{W_p/\Gamma}^m,\] so $q_*(\shf F)^\Gamma$ is locally free on $W/\Gamma$. We have now shown that $q_*(\shf F)^\Gamma$ is indeed locally free on some neighbourhood of $L/\Gamma$.
		
		We now show the framing, i.e., that $\phi$ induces an isomorphism $j^*(q_*(\shf F))^\Gamma\isoto \OO_{r_\infty}^{m}$.
		Note that we have, in \eqref{diag}, a base extension morphism $(q|_L)_*i^*(\shf F)\to j^*q_*\shf F$, or, taking $\Gamma$-invariants,  $(q|_L)_*(i^*(\shf F))^\Gamma\to (j^*q_*\shf F)^\Gamma = j^*(q_*\shf F)^\Gamma$, and so $\phi$ induces a morphism (again, using \cref{thm:descentWhenScheme} for the action of $\Gamma$ on $L$)
		
		\[  \psi\colon \OO_{r_\infty}^m \overset{\phi^\Gamma}{\isoto} (q|_L)_*(i^*(\shf F))^\Gamma\to j^*(q_*\shf F)^\Gamma.\]
		
		We will show that $\psi$ locally is an isomorphism, and thus an isomorphism. Choose an open equivariantly trivialising subscheme $W_p$ as before, we can then restrict the diagram \eqref{diag} to give (here we are again slightly abusing notation)
		\begin{equation}\label{diagRes}
			{ \begin{tikzcd}
					L\cap W_p \arrow[r, "i"] \arrow[d, "q|_L"'] & W_p \arrow[d, "q"] \\ 	
					(L\cap W_p)/\Gamma \arrow[r, "j"]                                                                        &W_p/\Gamma                                     
			\end{tikzcd} }
			.
		\end{equation}
		
		But because there is a $\Gamma$-equivariant isomorphism $\shf F|_{W_p}\isoto \OO_{W_p}^m$, it is easy to see (again using \cref{thm:descentWhenScheme}) that we have $(q_*\shf F|_{W_p})^\Gamma) \isoto \OO_{W_p/\Gamma}^m$, and so we have 
		\[ \psi|_{W_p\cap r_\infty}\colon \OO_{L\cap W_p/\Gamma}^m \overset{\phi^\Gamma}{\isoto} (q|_L)_*(i^*(\shf F))^\Gamma\to j^*(q_*\shf F)^\Gamma) \isoto \OO_{L\cap W_p/\Gamma}^m .\]
		This concludes the proof.
	\end{proof}
	
	\printbibliography
	
\end{document}